\documentclass{article}
\usepackage{latexsym,amsfonts,amsmath,amsthm}
\usepackage[margin=1in]{geometry}
\newcommand{\cqfd}{\mbox{}\nolinebreak\hfill\rule{2mm}{2mm}\medbreak\par}
\numberwithin{equation}{section}
\newtheorem{theorem}{Theorem}[section]

\newtheorem{lemma}{Lemma}[section]
\newtheorem{proposition}{Proposition}[section]
\newtheorem{remark}{Remark}[section]
\newtheorem{example}{Example}[section]
\newtheorem{corollary}{Corollary}[section]
\def\proof{\mbox {\it Proof.~}}
\title{Systems with discrete singular $\phi$-Laplacian and maximal monotone boundary conditions}

\author{Andreea GRUIE, Petru JEBELEAN and C\u{a}lin \c{S}ERBAN}
\newcommand{\Addresses}{{
  \bigskip
  \footnotesize

  A.~GRUIE, \textsc{Department of Mathematics, West University of Timi\c{s}oara, Blvd. V. P\^{a}rvan, no. 4, 300223  Timi\c{s}oara, Romania,}
  \textit{E-mail}: \texttt{andreea.gruie@e-uvt.ro}

  \medskip

  P.~JEBELEAN (Corresponding author), \textsc{Institute for Advanced Environmental Research, West University of Timi\c{s}oara, Blvd. V. P\^{a}rvan, no. 4, 300223  Timi\c{s}oara, Romania,}
  \textit{E-mail}: \texttt{petru.jebelean@e-uvt.ro}

  \medskip

  C.~\c{S}ERBAN, \textsc{Department of Mathematics, West University of Timi\c{s}oara, Blvd. V. P\^{a}rvan, no. 4, 300223  Timi\c{s}oara, Romania,}
  \textit{E-mail}: \texttt{calin.serban@e-uvt.ro}

}}

\date{}
\begin{document}
\maketitle

\begin{abstract}
We are concerned with solvability of nonlinear systems involving a discrete singular $\phi$-Laplacian operator of type
\begin{equation*}\label{sgen} u \mapsto \Delta\left[\phi(\Delta u(n-1))\right] \qquad (n\in \{1, \dots, T\}),
\end{equation*}
associated with a general two point boundary condition having the form
\begin{equation*}
\left(\phi(\Delta u(0)),-\phi(\Delta u(T))\right)\in\gamma(u(0),u(T+1)),
\end{equation*}
where $\gamma:\mathbb{R}^N\times\mathbb{R}^N\to2^{\mathbb{R}^N\times\mathbb{R}^N}$ is a maximal monotone operator with $0_{\mathbb{R}^N \times \mathbb{R}^N}\in \gamma(0_{\mathbb{R}^N \times \mathbb{R}^N})$.
The mapping $\phi$ is a potential homeomorphism from an open ball of radius $a$ centered at the origin $B_a \subset \mathbb{R}^N$  onto  $\mathbb{R}^N$ and $\Delta$ stands for the usual forward difference operator. When the perturbing nonlinearity in the system has not a potential structure we obtain existence of solutions by a priori estimates. Also, when the nonlinearity is of gradient type and $\gamma$ is a subdifferential, we provide a variational approach of the system in the frame of critical point theory for convex, lower semicontinuous perturbations of $C^1$-functionals. Then we derive the existence of solutions either as minimizers or saddle points of the corresponding energy functional.
\end{abstract}
\bigskip

\noindent \textit{Mathematics Subject Classification:} 39A27, 39A12, 39A70, 47J30.
\medskip

\noindent \textit{Keywords and phrases:} discrete singular $\phi$-Laplacian, maximal monotone operator, subdifferential, a priori estimates, fixed point, critical point, Palais-Smale condition, saddle-point solution.
\bigskip

\section{Introduction}

In this paper we first deal with the unique solvability of the discrete system
\begin{equation}\label{syssimpl}
-\Delta\left[\phi(\Delta u(n-1))\right]+u(n)=h(n)  \quad (n \in \mathbb{Z}[1,T]),
\end{equation}
subject to a general two point boundary condition of type
\begin{equation}\label{bcsysgen}
\left(\phi(\Delta u(0)),-\phi(\Delta u(T))\right)\in\gamma(u(0),u(T+1)).
\end{equation}
Here and hereafter, for $p,q\in\mathbb{Z}$ with $p<q$, the notation $\mathbb{Z}[p,q]$ stands for the discrete interval $\{p, p+1,\ldots, q\}$ and $\Delta u(n-1):=u(n)-u(n-1)$ is the usual forward difference operator.
 We denote by $B_a$ the open ball in $\mathbb{R}^N$ -- endowed with the Euclidean norm $|\cdot|$, centered in $0_{\mathbb{R}^N}$ of radius $a>0$ and we assume the following hypotheses on the data entering in problem \eqref{syssimpl}-\eqref{bcsysgen}:
$$\begin{array}{cl}
(H_{\Phi})  & \phi \, \, \mbox{\sl is a homeomorphism from } B_a \, \, \mbox{\sl onto } \mathbb{R}^N, \, \, \mbox{\sl such that }\phi(0_{\mathbb{R}^N})=0_{\mathbb{R}^N}, \, \phi =\nabla \Phi, \, \, \mbox{\sl with }
\Phi: \overline{B}_a \to \mathbb{R} \ \mbox{\sl of} \\ & \mbox{\sl class } C^1  \mbox{\sl on } B_a, \, \mbox{\sl continuous, strictly convex on } \overline{B}_a \mbox{ and } \Phi(0_{\mathbb{R}^N})=0;\\
(H_{\gamma})  & \gamma:\mathbb{R}^N\times\mathbb{R}^N\to2^{\mathbb{R}^N\times\mathbb{R}^N} \mbox{\sl is maximal monotone with } 0_{\mathbb{R}^N \times \mathbb{R}^N}\in \gamma(0_{\mathbb{R}^N \times \mathbb{R}^N});\\
(H_{h})  & h:\mathbb{Z}[1,T] \to \mathbb{R}^N \mbox{\sl is a given discrete function}.
\end{array}$$
The inner product $\langle\cdot|\cdot\rangle$ engendering the norm $|\cdot|$ is considered on $\mathbb{R}^N$ and the product space $\mathbb{R}^N\times\mathbb{R}^N$ is endowed with its usual inner product, denoted by $\langle\!\langle\cdot|\cdot\rangle\!\rangle$. The monotonicity assumption in ($H_{\gamma}$) is understood in the sense of $\langle\!\langle\cdot|\cdot\rangle\!\rangle$ and the notation $\|\cdot\|$ will stand for the norm corresponding to $\langle\!\langle\cdot|\cdot\rangle\!\rangle$.

\medskip
By {\sl a solution} of \eqref{syssimpl}-\eqref{bcsysgen} we mean a function $u:\mathbb{Z}[0,T+1] \to \mathbb{R}^N$ with
$$\|\Delta u \|_{\infty}:=\displaystyle \max_{n\in \mathbb{Z}[1,T+1]} | \Delta u(n-1) |<a\, \mbox{ and } \, (u(0),u(T+1)) \in D(\gamma),$$
which satisfies \eqref{syssimpl} and \eqref{bcsysgen}. It is worth to notice that, setting
\begin{equation}\label{Dsigma}
D_{\sigma }:=\{ (x,y)\in \mathbb{R}^N \times \mathbb{R}^N : |x-y|< \sigma\} \quad (\sigma >0),
\end{equation}
if $u$ is a solution of \eqref{syssimpl}-\eqref{bcsysgen} then, since
$$|u(T+1)-u(0)| \leq \sum_{n=1}^{T+1}\left |  \Delta u(n-1) \right | <(T+1)\, a ,$$
actually one has that
\begin{equation}\label{velongs}
(u(0),u(T+1)) \in D(\gamma) \cap D_{(T+1)\, a}.
\end{equation}
The existence and uniqueness result which we obtain for problem \eqref{syssimpl}-\eqref{bcsysgen} (see Theorem \ref{thsolQ}) will enable us to discuss the solvability of a general system having the form
\begin{equation}\label{sysgen}
-\Delta\left[\phi(\Delta u(n-1))\right]=f(n, u(0), \dots , u(T+1)) \quad (n \in \mathbb{Z}[1,T]),
\end{equation}
associated with the boundary condition \eqref{bcsysgen}, under  hypotheses ($H_{\Phi}$), ($H_{\gamma}$) and, instead of ($H_{h}$),

\medskip
\noindent \hspace {0.3cm}$ (H_{f}) \; \; \;  f:\mathbb{Z}[1,T] \times (\mathbb{R}^N)^{T+2} \to \mathbb{R}^N  \mbox{\sl is continuous.}$

\medskip
\noindent Since the solutions in the cases of the systems \eqref{syssimpl}-\eqref{bcsysgen} and \eqref{sysgen}-\eqref{bcsysgen} are obtained by monotonicity and topological arguments, the question of variational solutions naturally arises. For this, however, it is necessary that the problem to have a potential structure. We will address a situation when such a structure occurs in the case of the system
\begin{equation}\label{potsysgendis}
-\Delta\left[\phi(\Delta u(n-1))\right]=\nabla_u F(n, u(n)) \quad (n \in \mathbb{Z}[1,T])
\end{equation}
associated with the boundary condition
\begin{equation}\label{potbcsysgen}
\left(\phi(\Delta u(0)),-\phi(\Delta u(T))\right)\in \partial j (u(0),u(T+1)),
\end{equation}
under hypothesis ($H_{\Phi}$), together with
$$\begin{array}{clrc}
(H_{F})  & F \! = \! F(n,u) \!  : \!  \mathbb{Z}[1,T] \times \mathbb{R}^N \to \mathbb{R} \, \, \mbox{\sl is continuous,  } \nabla_u F \, \, \mbox{\sl exists and  is continuous on the set}
 \; \mathbb{Z}[1,T] \times \mathbb{R}^N\\
 & \mbox{\sl and }   F(\, \cdot \, , 0_{\mathbb{R}^N})=0;\\
(H_{j})  & j:\mathbb{R}^N \times \mathbb{R}^N \rightarrow (-\infty, +\infty] \, \, \mbox{\sl is proper,  convex, lower semicontinuous,  }j(0_{\mathbb{R}^N \times \mathbb{R}^N})=0
 \; \mbox {\sl and } \\
 & 0_{\mathbb{R}^N \times \mathbb{R}^N}\in \partial j (0_{\mathbb{R}^N \times \mathbb{R}^N}),
\end{array}
$$
which respectively replace ($H_{f}$) and ($H_{\gamma}$). By $\partial j$  in \eqref{potbcsysgen} we have denoted the subdifferential of $j$ in the sense of convex analysis \cite{Rock}. It is useful to recall that if $K\subset \mathbb{R}^N \times \mathbb{R}^N$ is a nonempty, closed and convex set, then denoting by $I_K$ the indicator function of $K$ and by $N_K$ the normal cone to $K$, one has that $\partial I_K(z)=N_K(z)$ if $z\in K$ and  $\partial I_K(z)=\emptyset$ if $z\notin K$ (see p. 215-216 in \cite{Rock}).

\medskip
In recent years, much attention has been paid to problems with singular difference operators of
type $\Delta\left[\phi(\Delta u(n-1))\right]$,  perturbed with terms having various structures and associated with classical boundary
conditions such as Dirichlet, Neumann, Neumann-Steklov, periodic, subharmonic, etc. (see \cite{BeGh, BeMa, CaJeSe, DaGu, DMM, GrSe, LiLu, LuMa, LuMaLu, Ma1}  and the references therein). Such problems are discrete variants of the analogous differential ones and the prototype of $\Delta\left[\phi(\Delta u(n-1))\right]$ is the classical \textit{discrete relativistic operator}
$$u \mapsto \mathcal{R}(u):= \Delta\left[\frac{\Delta u(n-1)}{\sqrt{1-|\Delta u(n-1)|^2}}\right ] \qquad (n\in \mathbb{Z}[1,T]),$$
corresponding to $\phi (y)={y}/{\sqrt{1-|y|^2}}$ ($y\in B_1$). As emphasized in \cite{Ma1}, the operator $\mathcal{R}$  may be seen as a discretization of the acceleration in special relativity \cite{berg, LaLi}. Results on this topic focus on the cases when the unknown function is with either scalar values (most of them) \cite{BeMa, DaGu, DMM, LiLu, LuMa, LuMaLu} or,  as
in our case, is $\mathbb{R}^N$-valued \cite{BeGh, CaJeSe, GrSe, Ma1}. It is worth to notice that scalar unknown function ($N=1$)
corresponds to the discrete version of a scalar differential equation, while the more general case of $\mathbb{R}^N$-valued unknown function fits with discrete form of a differential system. Approaches to this topic are varied and make use of tools such as fixed point, topological degree, lower and upper solutions, critical point, as well as various combinations of these.

\medskip
It should be emphasized that hypothesis ($H_\phi$) -- also employed in \cite{GrSe, Ma1}, allows to consider general discrete singular $\phi$-Laplacians, among which we note the \textit{discrete $p$-relativistic operator}
$$u \mapsto \mathcal{R}_p(u):= \Delta\left[\frac{|\Delta u(n-1)|^{p-2}\Delta u(n-1)}{(1-|\Delta u(n-1)|^p)^{1-1/p}}\right ] \qquad (n\in \mathbb{Z}[1,T], \, p>1),$$
which is the discrete variant of the differential $p$-relativistic operator \cite{JeMaSe}, or its double phase variant $$u\mapsto \mathcal{R}_p(u)+\mathcal{R}_q(u) \quad (p,q>1,\,  p\neq q).$$
Observe that $\mathcal{R}_p$ is engendered by $\phi (y)={|y|^{p-2}y}/{(1-|y|^p)^{1-1/p}}$ ($y\in B_1$), has the potential $\Phi(y)=1-(1-|y|^p)^{1/p}$ ($y\in \overline{B}_1$) and yields $\mathcal{R}$ for $p=2$.

\medskip
Concerning the general boundary condition \eqref{bcsysgen}, first of all is worth to notice that it covers the classical ones. Thus, the Dirichlet, Neumann and mixed Dirichlet-Neumann homogeneous boundary
conditions
\begin{equation}\label{dirhdis}
 \quad u(0)= 0_{\mathbb{R}^N}=u(T+1) ,
\end{equation}
\begin{equation}\label{neuhdis}
\phi(\Delta u(0))= 0_{\mathbb{R}^N}=\phi( \Delta u(T)),
\end{equation}
\begin{equation}\label{mixedis}
 \quad u(0)= 0_{\mathbb{R}^N}=\phi( \Delta u(T)) ,
\end{equation}
are obtained by choosing $j=I_K$ with $K=\{ 0_{\mathbb{R}^N \times \mathbb{R}^N} \}$, $K=\mathbb{R}^N \times \mathbb{R}^N$ and $K=\left \{ (0_{\mathbb{R}^N} ,  \, x ) \; : \; x\in \mathbb{R}^N \right \}$, respectively. Denoting
$$d_N^1:=\left \{ (x,x) \, : \, x\in \mathbb{R}^N\right \}  \quad \mbox{ and } \quad d^2_N:=\left \{ (x,-x) \, : \, x\in \mathbb{R}^N\right \}, $$
the periodic and antiperiodic boundary conditions
\begin{equation}\label{peridis}
\quad u(0)-u(T+1)=0_{\mathbb{R}^N}=\phi(\Delta u(0))-\phi(\Delta u(T))  ,
\end{equation}
\begin{equation}\label{aperidis}
u(0)+u(T+1)= 0_{\mathbb{R}^N}=\phi(\Delta u(0))+\phi(\Delta u(T))
\end{equation}
are obtained with $K=d^1_N$ and  $K=d^2_N$, respectively. And this is not the only way to obtain conditions \eqref{peridis} and \eqref{aperidis}, these being also particular cases of a wider class. To provide this class, let $\mathbb{U}$ be an $N \times N$ real orthogonal matrix and consider the subspace
$Z= \left \{ (  x, \mathbb{U} \, x) \; : \; x\in \mathbb{R}^N \right \} \subset \mathbb{R}^N \times \mathbb{R}^N.$
Then we have
$$\partial I_Z(z)=N_Z(z)=Z^{\perp}=\{ (-\mathbb{U}^{\tau}x, x) \; : \; x \in \mathbb{R}^N \} \quad (z\in Z)$$
(here, $\mathbb{U}^{\tau}$ denotes the transpose of the matrix $\mathbb{U}$) and \eqref{potbcsysgen} becomes
\begin{equation}\label{Qperidis}
u(T+1) - \mathbb{U}\, u(0) =0_{\mathbb{R}^N}= \phi(\Delta u(T)) - \mathbb{U}\, \phi(\Delta u(0))  .
\end{equation}
A function $u : \mathbb{Z}[0,T+1] \to \mathbb{R}^N$ satisfying \eqref{Qperidis} is called rotating periodic and this kind of functions might be periodic ($\mathbb{U} = \mathbb{I}$ -- the identity matrix on $\mathbb{R}^N$),  subharmonic ($\mathbb{U}^k = \mathbb{I}$ for some $k \geq 2$) -- including antiperiodic ($\mathbb{U} = -\mathbb{I}$), or quasi-periodic ($\mathbb{U}^k \neq \mathbb{I}$ for any $k \geq 1$).
\medskip

More general, if $G:\mathbb{R}^N \times \mathbb{R}^N \to \mathbb{R}$ is a convex $C^1$
function with $G(0_{\mathbb{R}^N \times \mathbb{R}^N})=0$ and $G'(0_{\mathbb{R}^N \times \mathbb{R}^N})=0_{\mathbb{R}^N \times \mathbb{R}^N}$, and $K\subset \mathbb{R}^N \times \mathbb{R}^N$ is a closed and convex set with $0_{\mathbb{R}^N \times \mathbb{R}^N} \in K$, then taking $j=G+I_K$, we have that $j$ satisfies $(H_{j})$ and \eqref{potbcsysgen} reads
\begin{equation}\label{ncon}
\left ( u(0), u(T+1) \right ) \in K, \qquad \left ( \phi ( \Delta u(0)), -\phi(\Delta u(T)) \right ) -G' (u(0),u(T+1))\in N_K(u(0), u(T+1)).
\end{equation}

\medskip
It is worth emphasizing the novelty of the fact that this study is one that treats in a unified manner a class of problems that is wide from the point of view of the generality of the discrete singular $\phi$-Laplacian, of the nonlinear perturbations in the system, as well as of the form of the boundary condition \eqref{bcsysgen} and its particular case \eqref{potbcsysgen}. We note that a general condition of the type \eqref{potbcsysgen} was used for problems with the discrete $p$-Laplacian \cite{cSe} and with the discrete $p(\cdot )$-Laplacian \cite{BeJeSe1}. An analysis of the approach and techniques in this study, compared to those in \cite{BeJeSe1, cSe} highlights major differences brought about by the singular character of the discrete $\phi$-Laplacian operator.

\medskip

The rest of the paper is organized as follows. In Section 2 we prove an existence and uniqueness result for problem \eqref{syssimpl}-\eqref{bcsysgen} (Theorem \ref{thsolQ}). For this we use monotonicity and topological fixed point arguments. It is worth mentioning that in the approaches from the next two sections, Theorem \ref{thsolQ} is a key ingredient. Section 3 is devoted to problem \eqref{sysgen}-\eqref{bcsysgen}. Here we obtain the existence of solutions by a priori estimates. In Section 4 we provide a variational formulation of problem \eqref{potsysgendis}-\eqref{potbcsysgen} in the frame of critical point theory for convex, lower semicontinuous perturbations of $C^1$-functionals, developed by Szulkin \cite{Sz}. Next, the last section deals with existence of minimum energy and saddle-point solutions for a problem of the type \eqref{potsysgendis}-\eqref{potbcsysgen}.

\section{A unique solvability result}
Let $\gamma:\mathbb{R}^N\times\mathbb{R}^N\to2^{\mathbb{R}^N\times\mathbb{R}^N}$ be a set-valued monotone operator. In this section, we assume ($H_{\Phi}$), ($H_{h}$) and we are concerned with solvability of problem \eqref{syssimpl}-\eqref{bcsysgen}, which we conveniently write:
\begin{equation*}\label{bvp}
  [Q_\gamma(h)]\qquad\qquad
  \left\{
  \begin{array}{lcl}
    -\Delta\left[\phi(\Delta u(n-1))\right]+u(n)=h(n) & \hbox{$\quad (n \in \mathbb{Z}[1,T]),$} \\
    \left(\phi(\Delta u(0)),-\phi(\Delta u(T))\right)\in\gamma(u(0),u(T+1)),
  \end{array}
  \right.
  \end{equation*}
As we will see, invoking the hypothesis ($H_{\gamma}$) will only be necessary in the main result at the end of the section (Theorem \ref{thsolQ}).
We will make use of the following spaces
$$X_T:=\{ u:\mathbb{Z}[1,T]\to \mathbb{R}^N \} \quad \mbox{and} \quad  X_{T+2}:=\{ u:\mathbb{Z}[0,T+1]\to \mathbb{R}^N \},$$
which are respectively endowed with the inner products
$$(u|v)_T=\sum_{j=1}^{T}\langle u(j)|v(j) \rangle, \qquad (u|v)_{T+2}=\sum_{j=0}^{T+1}\langle u(j)|v(j) \rangle$$
and the corresponding norms
$$\|u\|_T=\left( \sum_{j=1}^{T}|u(j)|^2  \right)^{{1}/{2}}, \qquad   \|u\|_{T+2}=\left( \sum_{j=0}^{T+1}|u(j)|^2  \right)^{{1}/{2}}.$$
For $u,v \in X_{T+2}$  with $\|\Delta u \|_{\infty} <a$ and $\displaystyle \|\Delta v \|_{\infty} <a$, set
        \begin{align*}
        \omega(u&,v):=\langle\phi(\Delta u(0))-\phi(\Delta v(0))|u(0)-v(0)\rangle-\langle\phi(\Delta u(T))-\phi(\Delta v(T))|u(T+1)-v(T+1)\rangle \\
                &=\langle\!\langle\left(\phi(\Delta u(0)),-\phi(\Delta u(T))\right)-\left(\phi(\Delta v(0)),-\phi(\Delta v(T))\right)|\left(u(0),u(T+1)\right)-\left(v(0),v(T+1)\right)\rangle\!\rangle,
        \end{align*}
$${\mathcal O}(u,v):=-\sum_{n=1}^{T}\left\langle\Delta\left[\phi(\Delta u(n-1))\right]-\Delta\left[\phi(\Delta v(n-1))\right]|\ u(n)-v(n)\right\rangle,$$
$${\mathcal M}(u,v):=\sum_{n=1}^{T+1}\left\langle\phi(\Delta u(n-1))-\phi(\Delta v(n-1))|\Delta u(n-1)-\Delta v(n-1)\right\rangle$$
and observe that, as $\phi$ is (strictly) monotone, one has ${\mathcal M}(u,v) \geq 0$. Also, if $u,v$ satisfy the boundary condition in problem $[Q_\gamma(h)]$, that is
$$\left(\phi(\Delta u(0)),-\phi(\Delta u(T))\right)\in\gamma(u(0),u(T+1)) \quad \mbox{and} \quad  \left(\phi(\Delta v(0)),-\phi(\Delta v(T))\right)\in\gamma(v(0),v(T+1)),$$
then, by the monotonicity of $\gamma$, one has $\omega(u,v) \geq 0$.
\smallskip

\begin{proposition}\label{prop21}
If $u,v \in X_{T+2}$ are with $\|\Delta u \|_{\infty}<a$ and $\|\Delta v \|_{\infty}<a$, then  it holds
\begin{equation}\label{fsp}
{\mathcal O}(u,v)=\omega(u,v)+{\mathcal M}(u,v).
\end{equation}
\end{proposition}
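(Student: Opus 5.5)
This is a discrete integration by parts (Abel summation) identity, and the plan is to verify it directly. Set $w:=u-v\in X_{T+2}$ and $\psi(n):=\phi(\Delta u(n))-\phi(\Delta v(n))$ for $n\in\mathbb{Z}[0,T]$; these are well defined because $\|\Delta u\|_\infty<a$ and $\|\Delta v\|_\infty<a$. Then $\Delta[\phi(\Delta u(n-1))]-\Delta[\phi(\Delta v(n-1))]=\psi(n)-\psi(n-1)$, so
$$\mathcal O(u,v)=-\sum_{n=1}^{T}\langle \psi(n)-\psi(n-1)\,|\,w(n)\rangle .$$

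The next step is to split this sum and shift the index in the part containing $\psi(n-1)$:
$$\sum_{n=1}^{T}\langle\psi(n-1)|w(n)\rangle-\sum_{n=1}^{T}\langle\psi(n)|w(n)\rangle=\sum_{n=0}^{T-1}\langle\psi(n)|w(n+1)\rangle-\sum_{n=1}^{T}\langle\psi(n)|w(n)\rangle .$$
Both sums are then extended to the full range $n\in\mathbb{Z}[0,T]$. For the first sum this means adding and subtracting $\langle\psi(T)|w(T+1)\rangle$; for the second, adding and subtracting $\langle\psi(0)|w(0)\rangle$. This yields
$$\mathcal O(u,v)=\sum_{n=0}^{T}\langle\psi(n)|w(n+1)-w(n)\rangle+\langle\psi(0)|w(0)\rangle-\langle\psi(T)|w(T+1)\rangle .$$

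To finish, observe that $w(n+1)-w(n)=\Delta u(n)-\Delta v(n)$. Reindexing with $n\mapsto n-1$ turns the sum into exactly $\mathcal M(u,v)$. The two boundary terms are, by definition, $\omega(u,v)$, which gives \eqref{fsp}.

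There is no real obstacle here. The only care needed is bookkeeping of the endpoint terms $n=0$ and $n=T$, and making sure their signs match the expression for $\omega$, in particular the minus sign attached to $\phi(\Delta u(T))$ and $u(T+1)$. No monotonicity of $\phi$ or of $\gamma$ is used: the identity is purely algebraic.
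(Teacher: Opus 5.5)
Your proof is correct and is essentially the paper's argument. The paper applies its summation-by-parts formula, which leaves a boundary term at $w(T)$, and then adds and subtracts the $n=T+1$ term to obtain $\mathcal M(u,v)$ and the $w(T+1)$ boundary term. You carry out the same Abel summation by hand, extending both sums directly to $\mathbb{Z}[0,T]$, so you land on the $w(T+1)$ boundary term without that extra add-and-subtract step.
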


\noindent\proof Using the summation by parts formula
\begin{equation}\label{sbpf}
         \sum_{n=1}^{T}\langle\Delta \xi(n-1)|\eta(n)\rangle=\langle \xi(T)|\eta(T)\rangle-\langle \xi(0)|\eta(0)\rangle-\sum_{n=1}^{T}\langle \xi(n-1)|\Delta \eta(n-1)\rangle \quad (\xi, \eta \in X_{T+2}),
         \end{equation}
we obtain
\begin{align*}
    {\mathcal O}(u,v)=\ & -\sum_{n=1}^{T}\left\langle\Delta\left[\phi(\Delta u(n-1))-\phi(\Delta v(n-1))\right]|\ u(n)-v(n)\right\rangle\\
    =\ &-\langle\phi(\Delta u(T))-\phi(\Delta v(T))|\ u(T)-v(T)\rangle\\
    &+\langle\phi(\Delta u(0))-\phi(\Delta v(0))|\ u(0)-v(0)\rangle\\
    &+\sum_{n=1}^{T}\left\langle\phi(\Delta u(n-1))-\phi(\Delta v(n-1))|\Delta(u-v)(n-1)\right\rangle\\
    =\ &\sum_{n=1}^{T+1}\left\langle\phi(\Delta u(n-1))-\phi(\Delta v(n-1))|\Delta(u-v)(n-1)\right\rangle\\
    &-\langle\phi(\Delta u(T))-\phi(\Delta v(T))|\ \Delta(u-v)(T)\rangle\\
    &-\langle\phi(\Delta u(T))-\phi(\Delta v(T))|\ u(T)-v(T)\rangle\\
    &+\langle\phi(\Delta u(0))-\phi(\Delta v(0))|\ u(0)-v(0)\rangle\\
    =\ &{\mathcal M}(u,v) +\omega(u,v).
\end{align*} \cqfd

\begin{remark}\label{remMo}
\noindent \emph{Let $u,v$ be solutions of $[Q_\gamma(h)]$, respectively $[Q_\gamma(l)]$, ($h,l\in X_T$). Multiplying
$$-\Delta\left[\phi(\Delta u(n-1))\right]+\Delta\left[\phi(\Delta v(n-1))\right]+u(n)-v(n)=h(n)-l(n)\qquad (n \in \mathbb{Z}[1,T])$$
by $(u(n)-v(n))$ and summing between $1$ and $T$, from \eqref{fsp} we infer
\begin{equation}\label{est}
0\leq {\mathcal M}(u,v)+\omega(u,v)+\|u-v\|_T^2=\sum_{n=1}^T\langle h(n)-l(n) \, |\, u(n)-v(n) \rangle \leq \sqrt{T} |h-l |_{\infty}\|u-v\|_T
\end{equation}
\noindent(here and hereafter, for $\xi\in X_T$, we have denoted by $|\xi|_{\infty}$ the sup-norm on $X_T$, that is $|\xi|_{\infty}:=\displaystyle\max_{n\in \mathbb{Z}[1,T]} \, |\xi(n)|$).}

\emph{If $0_{\mathbb{R}^N \times \mathbb{R}^N}\in \gamma(0_{\mathbb{R}^N \times \mathbb{R}^N})$, then $0_{X_{T+2}}$ is a solution of $[Q_\gamma(0_{X_T})]$ and \eqref{est} becomes
\begin{equation}\label{est1}
0\leq {\mathcal M}(u,0_{X_{T+2}})+\omega(u,0_{X_{T+2}})+\|u\|_T^2=\sum_{n=1}^T\langle h(n) \, |\, u(n) \rangle \leq \sqrt{T} |h |_{\infty}\|u\|_T.
\end{equation}}
\end{remark}

\begin{proposition}\label{propunic}
Problem $[Q_\gamma(h)]$ has at most one solution.
\end{proposition}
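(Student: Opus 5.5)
The argument is a direct application of Remark \ref{remMo} with $l=h$. Suppose $u$ and $v$ are both solutions of $[Q_\gamma(h)]$. Then \eqref{est} gives
$$0\le \mathcal M(u,v)+\omega(u,v)+\|u-v\|_T^2=0.$$
Since $\gamma$ is monotone and both $u,v$ satisfy the boundary condition, $\omega(u,v)\ge 0$ (as observed before Proposition \ref{prop21}). The strict monotonicity of $\phi$ gives $\mathcal M(u,v)\ge 0$. Hence each of the three nonnegative terms vanishes; in particular $u(n)=v(n)$ for $n\in\mathbb{Z}[1,T]$.

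It remains to recover the endpoint values $u(0)$ and $u(T+1)$, which $\|\cdot\|_T$ does not control. This is the only real point. Use $\mathcal M(u,v)=0$: it is a sum of nonnegative terms
$$\langle\phi(\Delta u(n-1))-\phi(\Delta v(n-1))\,|\,\Delta u(n-1)-\Delta v(n-1)\rangle,\qquad n\in\mathbb{Z}[1,T+1],$$
so each term is zero. Strict monotonicity of $\phi$ (from $\phi=\nabla\Phi$ with $\Phi$ strictly convex, or from $\phi$ being an injective monotone map) then forces $\Delta u(n-1)=\Delta v(n-1)$ for all $n\in\mathbb{Z}[1,T+1]$.

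Taking $n=1$ gives $u(1)-u(0)=v(1)-v(0)$, and since $u(1)=v(1)$ this yields $u(0)=v(0)$. Taking $n=T+1$ gives $u(T+1)-u(T)=v(T+1)-v(T)$, and since $u(T)=v(T)$ this yields $u(T+1)=v(T+1)$. Therefore $u=v$ on $\mathbb{Z}[0,T+1]$.

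The one thing to check is that strict monotonicity of $\phi$ really holds. A strictly convex $C^1$ potential has a strictly monotone gradient: if $\langle\nabla\Phi(x)-\nabla\Phi(y)\,|\,x-y\rangle=0$ with $x\neq y$, then adding the two first-order convexity inequalities shows that both hold with equality, which contradicts strict convexity. Applying this to $x=\Delta u(n-1)$ and $y=\Delta v(n-1)$, both in $B_a$, completes the argument.
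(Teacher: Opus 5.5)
Your proof is correct and follows the paper's argument: take $l=h$ in \eqref{est} to get $\mathcal M(u,v)=0$ and $u=v$ on $\mathbb{Z}[1,T]$, then use strict monotonicity of $\phi$ to get $\Delta u=\Delta v$ and hence agreement at $0$ and $T+1$, an endpoint step the paper leaves implicit and you spell out. One small caution: your parenthetical ``injective monotone'' justification does not hold for general monotone maps (a rotation by $\pi/2$ in $\mathbb{R}^2$ is injective and monotone but not strictly monotone), so rely, as your last paragraph does, on the strict convexity of $\Phi$.
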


\noindent\proof If $u$ and $v$ are two solutions of $[Q_\gamma(h)]$, then from \eqref{est} with $l=h$, it follows
$${\mathcal M}(u,v)+\|u-v\|_T^2=0,$$
which implies $u=v$, by the strict monotonicity of $\phi$.\cqfd
\medskip

\begin{remark}
\emph{Observe that  Proposition \ref{propunic}  remains valid if instead of the entire hypothesis $(H_{\Phi})$ we solely assume that "\textit{$\phi:B_a \to \mathbb{R}^N$ is strictly monotone}".}
\end{remark}
\medskip

Next, let us consider the non-homogeneous Dirichlet and Neumann problems

\begin{equation*}\label{dirxy}
  [\mathcal{D}_{x,y}(h)]\qquad\qquad
  \left\{
  \begin{array}{lcl}
    -\Delta\left[\phi(\Delta u(n-1))\right]+u(n)=h(n) & \hbox{$\quad (n \in \mathbb{Z}[1,T]),$} \\
    u(0)=x,\ \ u(T+1)=y,
  \end{array}
  \right.
  \end{equation*}
respectively,
\begin{equation*}\label{neuxy}
  [\mathcal{N}_{x,y}(h)]\qquad\qquad
  \left\{
  \begin{array}{lcl}
    -\Delta\left[\phi(\Delta u(n-1))\right]+u(n)=h(n) & \hbox{$\quad (n \in \mathbb{Z}[1,T]),$} \\
    \phi(\Delta u(0))=x,\ \ \phi(\Delta u(T))=y,
  \end{array}
  \right.
\end{equation*}
where $x,y\in \mathbb{R}^N$ and $h\in X_T$.

\begin{proposition}\label{teu} (i) If $(x,y)\in D_{(T+1)a}$ then problem  $[\mathcal{D}_{x,y}(h)]$ has a unique solution $u_{x,y}$.

\medskip
(ii) For any $x,y\in \mathbb{R}^N$, problem $[\mathcal{N}_{x,y}(h)]$ has a unique solution $\widehat{u}_{x,y}$.
\end{proposition}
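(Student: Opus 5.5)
Uniqueness in both (i) and (ii) follows at once from Proposition \ref{propunic}, applied with suitable monotone operators. For (i) take $\gamma=\gamma_{x,y}$ with graph $\{(x,y)\}\times(\mathbb{R}^N\times\mathbb{R}^N)$. For (ii) take the constant operator $\gamma\equiv\{(x,-y)\}$ on $\mathbb{R}^N\times\mathbb{R}^N$. Both are monotone: in the first case $\omega(u,v)=0$ because the first arguments coincide, and in the second $\omega(u,v)=0$ because the values coincide. So only existence has to be proved, and I will do it by a variational argument in each case.

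\emph{Dirichlet problem.} Let
\[
K_{x,y}:=\{u\in X_{T+2}:\ u(0)=x,\ u(T+1)=y,\ \|\Delta u\|_\infty\le a\}.
\]
This set is nonempty because $|x-y|<(T+1)a$; the linear interpolation has steps of size $|y-x|/(T+1)<a$. It is also closed, convex and bounded, since $|u(n)|\le |x|+(T+1)a$. On $K_{x,y}$ consider
\[
J(u)=\sum_{n=1}^{T+1}\Phi(\Delta u(n-1))+\tfrac12\|u\|_T^2-(h|u)_T .
\]
$J$ is continuous and strictly convex, so it has a unique minimizer $u$ on the compact set $K_{x,y}$.

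The main obstacle is to show that $\|\Delta u\|_\infty<a$ and that $u$ satisfies the equation, since $\Phi$ need not be differentiable on $\partial B_a$. I would follow the Brezis--Mawhin type argument. Take the interpolant $w$, which has $\|\Delta w\|_\infty<a$, and set $u_t=u+t(w-u)$. Convexity of $\Phi$, the fact that $\Phi$ is $C^1$ on $B_a$, and the minimality inequality $J(u_t)\ge J(u)$ give information about the terms $\Phi(\Delta u(n-1))$. Alternatively, use the standard lemma (as in Brezis--Mawhin or Bereanu--Mawhin) that for strictly convex $\Phi$ with $\phi=\nabla\Phi$ a homeomorphism onto $\mathbb{R}^N$, one has $\langle\phi(z_k),w-z_k\rangle\to-\infty$ whenever $z_k\to\partial B_a$ and $|w|<a$. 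Dividing $J(u_t)-J(u)\ge0$ by $t$ and letting $t\to0^+$ produces a contradiction if some $|\Delta u(n-1)|=a$, because the directional derivative would be $-\infty$.

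A cleaner route is to approximate. Minimize over $K^\varepsilon$, the same set with $\|\Delta u\|_\infty\le a-\varepsilon$, or use a perturbation argument. Once $\|\Delta u\|_\infty<a$ is known, $u$ lies in the relative interior with respect to the difference constraint. The Euler--Lagrange equation in directions $v$ with $v(0)=v(T+1)=0$, combined with the summation by parts formula \eqref{sbpf}, then yields the equation of $[\mathcal{D}_{x,y}(h)]$.

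\emph{Neumann problem.} Minimize
\[
\widehat J(u)=\sum_{n=1}^{T+1}\Phi(\Delta u(n-1))+\tfrac12\|u\|_T^2-(h|u)_T-\langle x|u(0)\rangle+\langle y|u(T+1)\rangle
\]
over $\{u\in X_{T+2}:\|\Delta u\|_\infty\le a\}$. Coercivity comes from the term $\tfrac12\|u\|_T^2$, which bounds $u(1)$. The constraint $\|\Delta u\|_\infty\le a$ then bounds $u(0)$ and $u(T+1)$ in terms of $u(1)$ and $u(T)$, and the linear boundary terms are dominated by the quadratic term. Hence a minimizer exists. The same boundary lemma shows $\|\Delta u\|_\infty<a$. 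Testing with arbitrary $v\in X_{T+2}$ and using \eqref{sbpf} gives the equation together with $\phi(\Delta u(0))=x$ and $\phi(\Delta u(T))=y$; the boundary values come from varying $v(0)$ and $v(T+1)$ independently.

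An alternative fallback for existence is to reduce both problems to finite-dimensional fixed point problems. This uses the inverse $\phi^{-1}:\mathbb{R}^N\to B_a$, which is bounded, and Brouwer's theorem, as in the approach of Bereanu and Mawhin.
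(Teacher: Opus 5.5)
Your uniqueness argument is the paper's. For existence, the paper gives no argument of its own: it invokes \cite[Theorem 2.4]{GrSe} for $[\mathcal{D}_{x,y}(h)]$ and \cite[Corollary 3.5]{BeGh}, a topological result, for $[\mathcal{N}_{x,y}(h)]$. Your variational proof is a genuinely different, self-contained route, and apart from one sign error it is sound. It is the same mechanism as Proposition \ref{pivar}, with one difference. You prove directly that the minimizer satisfies $\|\Delta u\|_\infty<a$. Proposition \ref{pivar} instead takes the interior solution from Theorem \ref{thsolQ}, which itself rests on the present proposition, and whose hypothesis $0\in\gamma(0)$ in general fails for these two $\gamma$'s. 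Your argument also shows exactly where $(x,y)\in D_{(T+1)a}$ enters: it supplies a competitor $w$ with $\|\Delta w\|_\infty<a$. The paper's route buys brevity; yours buys independence from the literature.

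The one actual error is in the Neumann functional, whose boundary terms have the wrong signs. Testing $\widehat J$ with $v$ supported at $n=0$ gives $-\phi(\Delta u(0))-x=0$. Testing at $n=T+1$ gives $\phi(\Delta u(T))+y=0$. So your minimizer solves $[\mathcal{N}_{-x,-y}(h)]$, not $[\mathcal{N}_{x,y}(h)]$. The boundary term must be $+\langle x|u(0)\rangle-\langle y|u(T+1)\rangle$. This is $j(u(0),u(T+1))$ with $j(\xi,\eta)=\langle x|\xi\rangle-\langle y|\eta\rangle$, whose subdifferential is $\{(x,-y)\}$, matching the convention of Section 4. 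Since $x,y$ are arbitrary the repair is trivial, but as written your final claim is false.

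Two points should be written out rather than cited.
\begin{itemize}
\item The boundary lemma follows from convexity. Take $0<\rho\le a-|w|$ and $C:=\max_{\overline{B}_a}\Phi-\min_{\overline{B}_a}\Phi$. For every $w'$ with $|w'-w|\le\rho$ one has $\langle\phi(z_k)|w'-z_k\rangle\le\Phi(w')-\Phi(z_k)\le C$. The choice $w'=w+\rho\,\phi(z_k)/|\phi(z_k)|$ then gives $\langle\phi(z_k)|w-z_k\rangle\le C-\rho|\phi(z_k)|\to-\infty$. Here $|\phi(z_k)|\to\infty$ as $|z_k|\to a$, because $\phi^{-1}$ maps bounded sets into compact subsets of $B_a$.
\item The lemma concerns points of $B_a$, so you must transfer it to the one-sided derivative at a boundary point $z$. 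Apply the mean value theorem to $s\mapsto\Phi(z+s(w-z))$, which is continuous on $[0,1]$ and $C^1$ on $(0,1)$, and use $w-z_s=(1-s)(w-z)$.
\end{itemize}

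The $K^\varepsilon$ approximation you mention is unnecessary. A minimizer there may sit on $\|\Delta u\|_\infty=a-\varepsilon$ and then carries a multiplier, so it does not give the equation directly.
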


\noindent\proof For the uniqueness part, observe that $[\mathcal{D}_{x,y}(h)]$ and $[\mathcal{N}_{x,y}(h)]$ are problems of type $[Q_\gamma(h)]$. Indeed, for problem $[\mathcal{D}_{x,y}(h)]$ one take $D(\gamma)=\{ (x,y)\}$, $\gamma (x,y)= \mathbb{R}^N \times \mathbb{R}^N$, while
$[\mathcal{N}_{x,y}(h)]$ corresponds to $D(\gamma)=\mathbb{R}^N \times \mathbb{R}^N$, $\gamma (\xi,\eta)= \{(x,-y)\}$ for all $(\xi, \eta) \in \mathbb{R}^N \times \mathbb{R}^N$. Thus, in both cases, Proposition \ref{propunic} applies. On account of \cite[Theorem 2.4]{GrSe}, problem $[\mathcal{D}_{x,y}(h)]$ has a solution if and only if $(x,y)\in D_{(T+1)a}$. The fact that $[\mathcal{N}_{x,y}(h)]$ is solvable follows by \cite[Corollary 3.5]{BeGh}.\cqfd

Below, we keep the notations in Proposition \ref{teu}, namely $\widehat{u}_{x,y}$ denotes the solution of $[\mathcal{N}_{x,y}(h)]$ and, if condition $(x,y)\in D_{(T+1)a}$ is satisfied, $u_{x,y}$ stands for the solution of $[\mathcal{D}_{x,y}(h)]$.

\begin{proposition}\label{ubarcont}
The mapping $\mathbb{R}^N \times \mathbb{R}^N \ni (x,y)  \mapsto (\widehat{u}_{x,y}(0), \widehat{u}_{x,y}(T+1)) \in \mathbb{R}^N \times \mathbb{R}^N$ is continuous.
\end{proposition}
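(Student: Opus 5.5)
We will show continuity through sequences. Fix $(x,y)\in\mathbb{R}^N\times\mathbb{R}^N$ and let $(x_k,y_k)\to(x,y)$. Set $u_k:=\widehat{u}_{x_k,y_k}$ and $u:=\widehat{u}_{x,y}$. In fact we will prove the stronger statement $u_k\to u$ in $X_{T+2}$, which gives the claim at once. There are two ingredients: a uniform bound on $u_k$, and the uniqueness in Proposition \ref{teu} (ii), which identifies every limit point.

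\textbf{Boundedness.} Every $u_k$ satisfies $\|\Delta u_k\|_\infty<a$. Hence the differences are uniformly bounded, and it suffices to bound one value, say $u_k(1)$. Summing the equation $-\Delta[\phi(\Delta u_k(n-1))]+u_k(n)=h(n)$ over $n\in\mathbb{Z}[1,T]$ telescopes to
$$\phi(\Delta u_k(0))-\phi(\Delta u_k(T))+\sum_{n=1}^T u_k(n)=\sum_{n=1}^T h(n),$$
that is,
$$\sum_{n=1}^T u_k(n)=\sum_{n=1}^T h(n)-x_k+y_k .$$
The right-hand side is bounded because $(x_k,y_k)$ converges. Also $|u_k(n)-u_k(1)|<(T-1)a$ for all $n$, so $|T\,u_k(1)|$ is bounded, and therefore $u_k$ is bounded in $X_{T+2}$.

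\textbf{Passage to the limit.} This is the delicate step, because $\phi$ is only defined on the open ball $B_a$. Take any subsequence. By compactness we extract a further subsequence with $u_k\to v$ in $X_{T+2}$. We know $|\Delta v(n-1)|\le a$, and we must show the inequality is strict. The equation gives, for $n\in\mathbb{Z}[1,T]$, the recursion
$$\phi(\Delta u_k(n))=\phi(\Delta u_k(n-1))+u_k(n)-h(n),$$
starting from $\phi(\Delta u_k(0))=x_k$. By induction, every $\phi(\Delta u_k(n))$, $n\in\mathbb{Z}[0,T]$, converges to some $w_n\in\mathbb{R}^N$ (using $u_k(n)\to v(n)$), with $w_0=x$.

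Since $\phi^{-1}:\mathbb{R}^N\to B_a$ is continuous, $\Delta u_k(n)=\phi^{-1}(\phi(\Delta u_k(n)))\to\phi^{-1}(w_n)\in B_a$. Hence $\Delta v(n)=\phi^{-1}(w_n)$, so $\|\Delta v\|_\infty<a$ and $\phi(\Delta v(n))=w_n$. Letting $k\to\infty$ in the equation and in the boundary conditions $\phi(\Delta u_k(0))=x_k$, $\phi(\Delta u_k(T))=y_k$ shows that $v$ solves $[\mathcal{N}_{x,y}(h)]$.

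By the uniqueness in Proposition \ref{teu} (ii), $v=u$. Every subsequence of $(u_k)$ thus has a further subsequence converging to the same $u$, so the whole sequence converges: $u_k\to u$. In particular $(u_k(0),u_k(T+1))\to(u(0),u(T+1))$.

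The expected main obstacle is the degeneracy of the limit, namely ensuring $|\Delta v|<a$ strictly. It is resolved above by passing to the limit in the $\phi$-values, which stay bounded thanks to the recursion, rather than in the differences directly, and then applying the homeomorphism property of $\phi$ from $(H_\Phi)$.
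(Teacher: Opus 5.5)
Your proof is correct, and for the convergence step it takes a genuinely different route from the paper.

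The boundedness step is essentially the paper's. The paper also sums the equation to get $\sum_{k=1}^T\widehat{u}_{x,y}(k)=y-x+\sum_{k=1}^T h(k)$. It then bounds the boundary values through explicit representation formulas and the boundedness of the range of $\phi^{-1}$. Your version (bounded differences plus the mean) is a shorter way of saying the same thing.

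For convergence, however, the paper uses neither compactness nor uniqueness. It compares $\widehat{u}_{x_k,y_k}$ directly with the known solution $\widehat{u}_{x,y}$ through the identity of Proposition \ref{prop21}. Multiplying the difference of the two equations by $\widehat{u}_{x_k,y_k}-\widehat{u}_{x,y}$ and using $\mathcal{M}\ge 0$ gives
\[
\|\widehat{u}_{x_k,y_k}-\widehat{u}_{x,y}\|_T^2\le\langle y_k-y\,|\,\widehat{u}_{x_k,y_k}(T+1)-\widehat{u}_{x,y}(T+1)\rangle-\langle x_k-x\,|\,\widehat{u}_{x_k,y_k}(0)-\widehat{u}_{x,y}(0)\rangle ,
\]
and the right-hand side tends to $0$ by the boundedness already established. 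The boundary values are then recovered from $\widehat{u}_{x_k,y_k}(0)=\widehat{u}_{x_k,y_k}(1)-\phi^{-1}(x_k)$ and its analogue at $T+1$. Because the limit is never treated as an unknown object, the issue you flag as the main obstacle (a limit $v$ with $|\Delta v(n)|=a$) never arises there. One also gets an explicit modulus of continuity once the boundary values are bounded.

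Your argument replaces this estimate with a closed-graph property of $(x,y)\mapsto\widehat{u}_{x,y}$. The $\phi$-values are propagated from $x_k$ by the recursion and hence converge, and continuity of $\phi^{-1}$ keeps the limit differences inside $B_a$. You then identify the limit using the uniqueness in Proposition \ref{teu} (ii). This is softer and gives no rate. The monotone structure still enters, but only indirectly, through Proposition \ref{propunic}. Its advantage is robustness: it needs only a priori bounds, the recursion, and uniqueness, not an energy identity adapted to varying boundary data.
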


\noindent\proof Let $(x,y)\in \mathbb{R}^N \times \mathbb{R}^N$. Summing
\begin{equation}\label{ubc1}
-\Delta\left[\phi(\Delta \widehat{u}_{x,y}(n-1))\right]+\widehat{u}_{x,y}(n)=h(n)\quad (n \in \mathbb{Z}[1,T])
\end{equation}
between 1 and $T$, and using the boundary conditions $\phi(\Delta \widehat{u}_{x,y}(0))=x$ and $\phi(\Delta \widehat{u}_{x,y}(T))=y$, we obtain
\begin{equation}\label{ubc2}
\sum_{k=1}^T\widehat{u}_{x,y}(k)=y-x+\sum_{k=1}^T h(k).
\end{equation}
Also, summing again in \eqref{ubc1} between 1 and $k\in\mathbb{Z}[1,T]$, one has
$$\phi(\Delta \widehat{u}_{x,y}(k))-x=\sum_{j=1}^k\left(\widehat{u}_{x,y}(j)-h(j)\right)\quad (k \in \mathbb{Z}[1,T]),$$
which yield
$$\Delta \widehat{u}_{x,y}(k)=\phi^{-1}\left(x+\sum_{j=1}^k\left(\widehat{u}_{x,y}(j)-h(j)\right)\right)\quad (k \in \mathbb{Z}[0,T]).$$
From this it is straightforward to see that
\begin{equation}\label{ubc3}
\widehat{u}_{x,y}(i)=\widehat{u}_{x,y}(0)+\displaystyle\sum_{l=0}^{i-1} \phi^{-1}\left(x+\sum_{j=1}^l\left(\widehat{u}_{x,y}(j)-h(j)\right)\right)\quad (i \in \mathbb{Z}[0,T+1]).
\end{equation}
Then, combining \eqref{ubc3} with \eqref{ubc2}, it follows
\begin{equation}\label{ubar0}
\widehat{u}_{x,y}(0)=\frac{1}{T} \left [y-x+ \sum_{i=1}^T h(i) - \sum_{i=1}^T \sum_{l=0}^{i-1} \phi^{-1}\left(x+\sum_{j=1}^l\left(\widehat{u}_{x,y}(j)-h(j)\right)\right)\right].
\end{equation}
Also, from \eqref{ubc3}, we get
\begin{equation}\label{ubarTp1}
\widehat{u}_{x,y}(T+1)=\widehat{u}_{x,y}(0)+\displaystyle\sum_{l=0}^{T} \phi^{-1}\left(x+\sum_{j=1}^l\left(\widehat{u}_{x,y}(j)-h(j)\right)\right).
\end{equation}

Now, let $(x_k,y_k) \rightarrow (x,y)$ in $\mathbb{R}^N \times \mathbb{R}^N$, as $k \to \infty$. Using \eqref{ubar0} and \eqref{ubarTp1} and the boundedness of the range of $\phi^{-1}$ we deduce that sequences $\{\widehat{u}_{x_k,y_k}(0)\}$
and $\{\widehat{u}_{x_k,y_k}(T+1)\}$ are bounded in $\mathbb{R}^N$. Multiplying
$$-\Delta\left[\phi(\Delta \widehat{u}_{x_k,y_k}(n-1))\right]+\Delta\left[\phi(\Delta \widehat{u}_{x,y}(n-1))\right]+\widehat{u}_{x_k,y_k}(n)-\widehat{u}_{x,y}(n)=0\qquad (n \in \mathbb{Z}[1,T])$$
by $(\widehat{u}_{x_k,y_k}(n)-\widehat{u}_{x,y}(n))$ and summing between $1$ and $T$, from \eqref{fsp} we infer
\begin{equation*}
0={\mathcal O}(\widehat{u}_{x_k,y_k},\widehat{u}_{x,y})+\|\widehat{u}_{x_k,y_k}-\widehat{u}_{x,y}\|_T^2=
\omega(\widehat{u}_{x_k,y_k},\widehat{u}_{x,y})+{\mathcal M}(\widehat{u}_{x_k,y_k},\widehat{u}_{x,y})+\|\widehat{u}_{x_k,y_k}-\widehat{u}_{x,y}\|_T^2,
\end{equation*}
which, since ${\mathcal M}(\widehat{u}_{x_k,y_k},\widehat{u}_{x,y})\geq0$, gives
$$\|\widehat{u}_{x_k,y_k}-\widehat{u}_{x,y}\|_T^2\leq\langle y_k-y\ |\ \widehat{u}_{x_k,y_k}(T+1)-\widehat{u}_{x,y}(T+1)\rangle-
\langle x_k-x\ |\ \widehat{u}_{x_k,y_k}(0)-\widehat{u}_{x,y}(0)\rangle$$
implying, by the boundedness of $\{\widehat{u}_{x_k,y_k}(0)\}$
and $\{\widehat{u}_{x_k,y_k}(T+1)\}$, that $\widehat{u}_{x_k,y_k} \rightarrow \widehat{u}_{x,y}$ in $X_T$, as $k \to \infty$.
\medskip

Finally, as $\phi(\Delta \widehat{u}_{x_k,y_k}(0))=x_k$ and $\widehat{u}_{x_k,y_k}(1) \rightarrow \widehat{u}_{x,y}(1)$, one has
$$\widehat{u}_{x_k,y_k}(0)=\widehat{u}_{x_k,y_k}(1)-\phi^{-1}(x_k)\rightarrow\widehat{u}_{x,y}(1)-\phi^{-1}(x)=\widehat{u}_{x,y}(0),$$
as $k \to \infty$. Similar, we get $\widehat{u}_{x_k,y_k}(T+1) \rightarrow \widehat{u}_{x,y}(T+1)$ as $k \to \infty$ and the proof is complete.\cqfd
\medskip

\begin{lemma}\label{lemarmj}
If $w\in X_{T+2}$ is with $\displaystyle \| \Delta w \|_\infty\leq a$ ($\mbox{resp.}< a$), then for all $m, i\in \mathbb{Z}[0,T+1]$, there exists $r_{m,i}\in\mathbb{R}^N$ with $|r_{m,i}|\leq  |m-i|a$
($\mbox{resp.}< |m-i|a$ if $m\neq i$) such that $w(m)=w(i)+r_{m,i}$.
\end{lemma}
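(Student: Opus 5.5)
The plan is to set $r_{m,i}:=w(m)-w(i)$, which trivially gives $w(m)=w(i)+r_{m,i}$, and then bound $|r_{m,i}|$ by writing it as a telescoping sum of forward differences. If $m=i$, then $r_{m,i}=0_{\mathbb{R}^N}$ and $|r_{m,i}|=0=|m-i|a$, so both versions of the claim hold; the strict inequality is only asserted when $m\neq i$, so nothing more is needed here.

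For $m\neq i$, by the symmetry $r_{i,m}=-r_{m,i}$ it suffices to treat $m>i$. The telescoping identity
$$r_{m,i}=w(m)-w(i)=\sum_{n=i+1}^{m}\Delta w(n-1)$$
holds since all indices $n-1$ range over $\mathbb{Z}[i,m-1]\subset\mathbb{Z}[0,T]$, so each $\Delta w(n-1)$ is among the differences controlled by $\|\Delta w\|_\infty$. The triangle inequality then gives
$$|r_{m,i}|\leq\sum_{n=i+1}^{m}|\Delta w(n-1)|\leq (m-i)\,\|\Delta w\|_\infty .$$
If $\|\Delta w\|_\infty\leq a$, this yields $|r_{m,i}|\leq |m-i|a$. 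If $\|\Delta w\|_\infty<a$, then since $m-i\geq 1$ we get $(m-i)\|\Delta w\|_\infty<(m-i)a$, which is the strict bound.

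There is no genuine obstacle. The only point needing care is in the strict case, where one must observe that the sum has at least one term when $m\neq i$; this is why strictness is excluded for $m=i$. The argument is the same estimate the paper already used to derive \eqref{velongs}, now applied with general endpoints $m,i$ in place of $0$ and $T+1$.
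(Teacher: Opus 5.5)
Your proof is correct and is essentially the paper's argument: the paper also takes $r_{m,i}=\sum_{n=i+1}^{m}\Delta w(n-1)$ for $m>i$ (and $r_{m,i}=-\sum_{n=m+1}^{i}\Delta w(n-1)$ for $m<i$), and the bound then follows from the triangle inequality exactly as you describe. Your explicit handling of the case $m=i$ and of the strict inequality (using $m-i\geq 1$) just spells out details the paper leaves to the reader.
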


\noindent\proof If $m>i$ we choose $r_{m,i}=\displaystyle \sum_{n=i+1}^m\Delta w(n-1)$,  while if $m<i$ we take $r_{m,i}=\displaystyle -\sum_{n=m+1}^i\Delta w(n-1)$. \cqfd

\begin{proposition}\label{estni}
Any $w\in X_{T+2}$ with $\displaystyle \| \Delta w \|_\infty\leq a$ satisfies
\begin{equation}\label{estimarew}
|w(m)|\leq\frac{1}{\sqrt{T}}\, \|w\|_T+Ta\qquad (m\in \mathbb{Z}[0,T+1]).
\end{equation}
\end{proposition}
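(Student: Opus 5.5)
We prove \eqref{estimarew} by comparing $w(m)$ with every interior value $w(i)$, $i\in\mathbb{Z}[1,T]$, and then averaging over $i$.

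Fix $m\in\mathbb{Z}[0,T+1]$. For each $i\in\mathbb{Z}[1,T]$, Lemma \ref{lemarmj} gives $w(m)=w(i)+r_{m,i}$ with $|r_{m,i}|\leq |m-i|a$. Since $m\in\mathbb{Z}[0,T+1]$ and $i\in\mathbb{Z}[1,T]$, we have $|m-i|\leq T$. This crude bound suffices: if $m=0$ and $i=T$, or $m=T+1$ and $i=1$, the distance is exactly $T$, and otherwise it is smaller. Hence
$$|w(m)|\leq |w(i)|+Ta \qquad (i\in\mathbb{Z}[1,T]).$$

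We then sum these $T$ inequalities over $i\in\mathbb{Z}[1,T]$ and divide by $T$, which gives
$$|w(m)|\leq \frac{1}{T}\sum_{i=1}^T|w(i)|+Ta .$$
The Cauchy--Schwarz inequality in $\mathbb{R}^T$ yields $\sum_{i=1}^T|w(i)|\leq \sqrt{T}\,\|w\|_T$. Therefore
$$\frac{1}{T}\sum_{i=1}^T|w(i)|\leq \frac{1}{\sqrt{T}}\|w\|_T,$$
and \eqref{estimarew} follows.

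The argument has no real obstacle. The only point requiring care is the uniform bound $|m-i|\leq T$, including the endpoint indices $m=0$ and $m=T+1$. Averaging is what produces the factor $1/\sqrt{T}$; simply taking the minimum of $|w(i)|$ over $i$ would give the same estimate.
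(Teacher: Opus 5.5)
Your proof is correct and matches the paper's approach: the paper simply says the estimate is immediate from Lemma \ref{lemarmj}. Your argument supplies exactly those details: the uniform bound $|m-i|\leq T$ for $m\in\mathbb{Z}[0,T+1]$ and $i\in\mathbb{Z}[1,T]$, then averaging over $i$ (or taking the minimum) combined with Cauchy--Schwarz.
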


\noindent\proof It is immediate from Lemma \ref{lemarmj}.\cqfd

\medskip

 Now, let $\theta : \mathbb{R}^N \times \mathbb{R}^N \to \mathbb{R}^N \times \mathbb{R}^N$ be defined by
$$D(\theta):=D_{(T+1)a},\qquad \theta (x,y):= \left(-\phi(\Delta u_{x,y}(0)),\ \phi(\Delta u_{x,y}(T))\right)\quad \left((x,y)\in D_{(T+1)a}\right).$$

\begin{proposition}\label{optheta}
The operator $\theta$ is maximal monotone and coercive.
\end{proposition}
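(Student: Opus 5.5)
Monotonicity comes from Proposition \ref{prop21} applied to two Dirichlet solutions $u=u_{x,y}$ and $v=u_{x',y'}$ with $(x,y),(x',y')\in D_{(T+1)a}$. Subtracting the equations, multiplying by $u(n)-v(n)$ and summing gives ${\mathcal O}(u,v)+\|u-v\|_T^2=0$. By \eqref{fsp} this reads $\omega(u,v)+{\mathcal M}(u,v)+\|u-v\|_T^2=0$. By definition of $\theta$,
$$\omega(u,v)=-\langle\!\langle \theta(x,y)-\theta(x',y')\,|\,(x,y)-(x',y')\rangle\!\rangle .$$
Hence
$$\langle\!\langle \theta(x,y)-\theta(x',y')|(x,y)-(x',y')\rangle\!\rangle={\mathcal M}(u,v)+\|u-v\|_T^2\geq 0,$$
so $\theta$ is monotone.

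For maximality, $\theta$ is single-valued on the open convex set $D_{(T+1)a}$. I would show that $\theta$ is continuous there; a continuous monotone operator on an open set is then maximal provided it is "closed at the boundary". The cleanest route is Minty's criterion: show that $R(\theta+I)=\mathbb{R}^N\times\mathbb{R}^N$.

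\emph{Continuity.} Take $(x_k,y_k)\to(x,y)$ in $D_{(T+1)a}$. The identity above with $v=u_{x,y}$ gives $\|u_k-u\|_T^2\leq \|\theta(x_k,y_k)-\theta(x,y)\|\,\|(x_k,y_k)-(x,y)\|$. So it suffices to show that $\theta(x_k,y_k)$ stays bounded. Summing the equation as in the proof of Proposition \ref{ubarcont} gives
$$\phi(\Delta u_k(T))-\phi(\Delta u_k(0))=\sum_{n=1}^T(u_k(n)-h(n)).$$
The values $u_k(n)$ are bounded by Lemma \ref{lemarmj}, since the boundary values are bounded and $|\Delta u_k|<a$. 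However, bounding the difference alone does not bound each endpoint term. I would instead argue by compactness: $u_k$ is bounded, so along a subsequence $u_k\to w$. Suppose $|\phi(\Delta u_k(0))|\to\infty$. Then $\Delta u_k(n)=\phi^{-1}\bigl(\phi(\Delta u_k(0))+\sum_{j\le n}(u_k(j)-h(j))\bigr)$ would push all increments to the sphere $|\cdot|=a$ in a common direction. This forces $|y-x|=(T+1)a$, contradicting $(x,y)\in D_{(T+1)a}$. So $\theta(x_k,y_k)$ is bounded, and the limit $w$ solves $[\mathcal{D}_{x,y}(h)]$. Uniqueness then gives convergence of the whole sequence.

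\emph{Surjectivity of $\theta+I$.} Given $(p,q)$, I need $(x,y)$ with $(-\phi(\Delta u_{x,y}(0))+x,\ \phi(\Delta u_{x,y}(T))+y)=(p,q)$. Equivalently, I need a solution $u$ of the equation with the Robin-type conditions $\phi(\Delta u(0))=u(0)-p$ and $\phi(\Delta u(T))=q-u(T+1)$. I would obtain it through the Neumann solutions of Proposition \ref{teu}(ii). Consider the map $(s,t)\mapsto(\widehat{u}_{s,t}(0)-p,\; q-\widehat{u}_{s,t}(T+1))$, which is continuous by Proposition \ref{ubarcont}; I seek a fixed point. Formula \eqref{ubar0}--\eqref{ubarTp1} shows $\widehat{u}_{s,t}(0)=(t-s)/T+O(1)$ with $\widehat u_{s,t}(T+1)-\widehat u_{s,t}(0)$ bounded. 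This structure is awkward for Brouwer directly. The main obstacle is therefore this surjectivity step, and I would try first a degree/homotopy argument, using the a priori bound from \eqref{est} applied to the monotone Robin operator $z\mapsto z-(p,q)$ (scaled by $\lambda$) to keep solutions in a bounded set.

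Coercivity is easier. From the identity with $v=u_{0,0}$, which exists since $(0,0)\in D_{(T+1)a}$:
$$\langle\!\langle\theta(x,y)-\theta(0,0)|(x,y)\rangle\!\rangle\geq\|u_{x,y}-u_{0,0}\|_T^2 .$$
Proposition \ref{estni} gives $|x|,|y|\leq T^{-1/2}\|u_{x,y}\|_T+Ta$, so $\|u_{x,y}\|_T\geq c\|(x,y)\|-C$. Thus
$$\langle\!\langle\theta(x,y)|(x,y)\rangle\!\rangle/\|(x,y)\|\geq c'\|(x,y)\|-C'-\|\theta(0,0)\|\to\infty,$$
which is coercivity.
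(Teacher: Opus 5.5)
\paragraph{Verdict.} Monotonicity and coercivity are fine, but maximality has a genuine gap: you never prove that $\theta+i_d$ is onto.

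\paragraph{What works and where it stops.} Your monotonicity step is the paper's. Your coercivity argument is also correct. You compare with $u_{0,0}$ and use monotonicity. The paper instead applies \eqref{fsp} directly with $v=0_{X_{T+2}}$, which is legitimate because Proposition \ref{prop21} is an identity and does not need $v$ to be a solution; this gives $\langle\!\langle\theta(x,y)|(x,y)\rangle\!\rangle\geq\|u_{x,y}\|_T(\|u_{x,y}\|_T-\sqrt{T}|h|_\infty)$ in one line. For maximality, you correctly reduce it via Minty's criterion (\cite[Proposition 2.2]{Br}, as in the paper) to surjectivity of $\theta+i_d$. Your reformulation as a fixed point of $\Lambda'(s,t)=(\widehat u_{s,t}(0)-p,\ q-\widehat u_{s,t}(T+1))$ is right: it is the paper's fixed point problem for $\Lambda$ after the change of variables $s=x-p$, $t=q-y$. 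But you then stop, call this step ``the main obstacle'', and only say you would try a degree argument. No a priori bound is derived, so surjectivity, and hence maximal monotonicity, is unproved. Your continuity paragraph does not replace this, since Minty's criterion does not use continuity.

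\paragraph{How to close it.} The missing bound is short, and the structure you found ``awkward'' is exactly what makes it work. $\Lambda'$ is continuous by Proposition \ref{ubarcont}, so by Schaefer's theorem it suffices to bound all $(s,t)$ with $(s,t)=\mu\Lambda'(s,t)$, $\mu\in(0,1]$. Put $u=\widehat u_{s,t}$.
\begin{itemize}
\item \emph{Algebraic route (the paper's).} By \eqref{ubar0}--\eqref{ubarTp1} and the bounded range of $\phi^{-1}$, $u(0)=(t-s)/T+P$ and $u(T+1)=u(0)+Q$, with $|P|\leq|h|_\infty+(T+1)a/2$ and $|Q|<(T+1)a$. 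Adding the two components of $(s,t)=\mu\Lambda'(s,t)$ cancels the unbounded term: $s+t=\mu(q-p-Q)$. Then $(1+2\mu/T)\,s=\mu\bigl((s+t)/T+P-p\bigr)$, so $s$ and $t$ are bounded. This is the analogue, in your variables, of \eqref{sysxy}--\eqref{sysxysol}.
\item \emph{Energy route (your hint).} $(s,t)=\mu\Lambda'(s,t)$ means $\phi(\Delta u(0))=\mu(u(0)-p)$ and $-\phi(\Delta u(T))=\mu(u(T+1)-q)$. Hence $\omega(u,0_{X_{T+2}})\geq-(|p|^2+|q|^2)/4$, and \eqref{fsp} gives $\|u\|_T^2\leq\sqrt{T}|h|_\infty\|u\|_T+(|p|^2+|q|^2)/4$. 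Proposition \ref{estni} then bounds $u(0)$ and $u(T+1)$, hence $(s,t)$.
\end{itemize}
At a fixed point, $\|\Delta u\|_\infty<a$ gives $(x,y):=(u(0),u(T+1))\in D_{(T+1)a}$. By Proposition \ref{teu}, $u=u_{x,y}$, so $\theta(x,y)+(x,y)=(p,q)$. One of these two arguments must be written out for the proof to be complete.
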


\noindent\proof According to \cite[Proposition 2.2]{Br}, in order to show the maximal monotonicity of operator $\theta$ we have to prove that: (i) $\theta$ is monotone and (ii) $\theta + i_d:D_{(T+1)a} \to \mathbb{R}^N \times \mathbb{R}^N$ is surjective (here and hereafter, $i_d$ stands for the identity mapping on $\mathbb{R}^N \times \mathbb{R}^N$).
\medskip

\noindent (i) Let $(x,y), \, (\xi , \eta)\in D_{(T+1)a}$ and $u_{x,y}$, $u_{\xi,\eta}$ be solutions of $[\mathcal{D}_{x,y}(h)]$, respectively $[\mathcal{D}_{\xi,\eta}(h)]$. We have to show that
$$ \langle \! \langle \theta (x,y) - \theta (\xi, \eta)| (x,y) - (\xi, \eta) \rangle \! \rangle \geq 0,$$
which, if we make use of the boundary conditions ($u_{x,y}(0)=x, \, u_{x,y}(T+1)=y, \, u_{\xi , \eta}(0)=\xi, \, u_{\xi , \eta}(T+1)=\eta $), means
\begin{equation}\label{mtheta}
-\omega(u_{x,y},u_{\xi,\eta})=-\langle \phi(\Delta u_{x,y}(0))-\phi(\Delta u_{\xi,\eta}(0)) | x-\xi \rangle+\langle \phi(\Delta u_{x,y}(T))-\phi(\Delta u_{\xi,\eta}(T)) | y-\eta \rangle \geq 0.
\end{equation}
Multiplying
$$-\Delta\left[\phi(\Delta u_{x,y}(n-1))\right]+\Delta\left[\phi(\Delta u_{\xi,\eta}(n-1))\right]+u_{x,y}(n)-u_{\xi,\eta}(n)=0\qquad (n \in \mathbb{Z}[1,T])$$
by $(u_{x,y}(n)-u_{\xi,\eta}(n))$ and summing between $1$ and $T$, from \eqref{fsp}, one get
\begin{equation*}
0={\mathcal O}(u_{x,y},u_{\xi,\eta})+\|u_{x,y}-u_{\xi,\eta}\|_T^2=
\omega(u_{x,y},u_{\xi,\eta})+{\mathcal M}(u_{x,y},u_{\xi,\eta})+\|u_{x,y}-u_{\xi,\eta}\|_T^2,
\end{equation*}
and, as ${\mathcal M}(u_{x,y},u_{\xi,\eta})\geq0$, clearly \eqref{mtheta} holds true.
\medskip

\noindent (ii) Let $(\xi, \eta)\in {\mathbb{R}^N \times \mathbb{R}^N}$ be arbitrary given. We have to prove that there exists some $(x,y)= (x_{\xi, \eta}, y_{\xi, \eta})\in D_{(T+1)a}$ such that
$$\theta(x,y) + i_d(x,y)=\left(-\phi(\Delta u_{x,y}(0)),\ \phi(\Delta u_{x,y}(T))\right)+(x,y)=(\xi,\eta),$$
i.e.,
\begin{equation}\label{stheta}
\left(-\phi(\Delta u_{x,y}(0)),\ \phi(\Delta u_{x,y}(T))\right)=\left(\xi-x,\eta - y \right).
\end{equation}
With this aim, we proceed by a fixed point argument, as follows. Let $\Lambda = \Lambda _{\xi , \eta} : \mathbb{R}^N \times \mathbb{R}^N \to \mathbb{R}^N \times \mathbb{R}^N$ be the operator defined by
$$\Lambda(x,y)= \left ( \widehat{u}_{x-\xi, \eta-y} (0), \widehat{u}_{x-\xi, \eta-y} (T+1) \right ) \quad \left ( (x,y) \in \mathbb{R}^N \times \mathbb{R}^N \right );$$
recall $\widehat{u}_{x-\xi, \eta-y}$ is the unique solution of problem $[\mathcal{N}_{x-\xi,\eta-y}(h)]$.
Suppose that we have already proved that there exists a fixed point $(x,y)$ of $\Lambda$, i.e.,
\begin{equation*}
\left ( \widehat{u}_{x-\xi, \eta-y} (0), \widehat{u}_{x-\xi, \eta-y} (T+1) \right )=(x,y),
\end{equation*}
which means that the solution $\widehat{u}_{x-\xi, \eta-y}$ satisfies the Dirichlet boundary conditions
\begin{equation*}
\widehat{u}_{x-\xi, \eta-y}(0)=x, \quad \widehat{u}_{x-\xi, \eta-y}(T+1)=y.
\end{equation*}
Thus, by Proposition \ref{teu} $(i)$, we have that $(x,y)\in D_{(T+1)a}$ and $\widehat{u}_{x-\xi, \eta-y}=u_{x,y}$, which implies \eqref{stheta}.
\medskip

Hence, it remains to prove that $\Lambda$ has a fixed point.
From Proposition \ref{ubarcont} it is immediate that operator $\Lambda$ is continuous and hence compact on the finite dimensional space $\mathbb{R}^N \times \mathbb{R}^N$. By virtue of Schaefer's theorem (see e.g. \cite[Corollary 4.4.12]{Llo}), it suffices to show that there is a positive constant $c$ such that
\begin{equation}\label{constc}
|x|+|y| \leq c,
\end{equation}
for all $(x,y) \in \mathbb{R}^N \times \mathbb{R}^N$ satisfying $(x,y)=\mu \, \Lambda (x,y)$ with some $\mu\in (0,1]$.
\medskip

So, let $(x,y) \in \mathbb{R}^N \times \mathbb{R}^N$ be such that
\begin{equation*}\label{aest}
(x,y)= \mu \left(\widehat{u}_{x-\xi, \eta-y}(0), \widehat{u}_{x-\xi, \eta-y}(T+1)\right)
\end{equation*}
with $\mu\in (0,1]$. Using \eqref{ubar0} and \eqref{ubarTp1}, we obtain
\begin{equation}\label{sysxy}
\left ( \frac{1}{\mu}+\frac{1}{T} \right ) x +\frac{1}{T} \, y= Q_1(x,y)\quad \mbox{and}\quad y=x+\mu Q_2(x,y),
\end{equation}
where
$$Q_1(x,y):=\frac{1}{T} \left [ \xi + \eta + \sum_{i=1}^T h(i) -\sum_{i=1}^T \sum_{l=0}^{i-1} \phi^{-1}\left(x-\xi+\sum_{j=1}^l\left(\widehat{u}_{x-\xi,\eta-y}(j)-h(j)\right)\right)\right ],$$
$$Q_2(x,y):=\sum_{l=0}^{T} \phi^{-1}\left(x-\xi+\sum_{j=1}^l\left(\widehat{u}_{x-\xi,\eta-y}(j)-h(j)\right)\right).$$
We have the estimates
\begin{equation}\label{Q1Q2}
|Q_1(x,y)|<\frac{1}{T}\ \overline{Q}+Ta\quad \mbox{and}\quad |Q_2(x,y)|<(T+1)a,
\end{equation}
with $\overline{Q}:=|\xi|+|\eta|+T|h|_\infty$. Next, solving \eqref{sysxy}, we get
\begin{equation}\label{sysxysol}
x=\frac{\mu T Q_1(x,y)-\mu^2 Q_2(x,y)}{T+2\mu}\quad \mbox{and}\quad y=\frac{\mu T Q_1(x,y)-\mu^2 Q_2(x,y)}{T+2\mu}+\mu Q_2(x,y).
\end{equation}
Then, using \eqref{Q1Q2}, \eqref{sysxysol}, together with $\mu\in (0,1]$, we infer that
$$|x|\leq\frac{\overline{Q}+(T^2+T+1)a}{T}\quad \mbox{and}\quad |y|\leq\frac{\overline{Q}+(T^2+T+1)a}{T}+(T+1)a.$$
Therefore, \eqref{constc} holds true with $$c=2\frac{\overline{Q}+(T^2+T+1)a}{T}+(T+1)a$$
and the proof of the maximal monotonicity of $\theta$ is complete.
\medskip

We show now that $\theta$ is coercive, i.e.,
\begin{equation}\label{coerciv}
\lim_{\scriptsize \begin{array}{crlc}
&\|(x,y)\| \to +\infty \\
&(x,y) \in D_{(T+1)a}
\end{array}} \! \!
{\frac{\langle \! \langle \theta(x,y)|(x,y)\rangle \! \rangle }{\|(x,y)\|}=+\infty}.
\end{equation}
Let $(x,y)\in D_{(T+1)a}$. As $u_{x,y}$ is solution of the Dirichlet problem $[\mathcal{D}_{x,y}(h)]$, from
$$\mathcal{O}(u_{x,y},0_{X_{T+2}})+\|u_{x,y}\|_T^2=\sum_{n=1}^T\langle h(n)|\, u_{x,y}(n)\rangle$$
and Proposition \ref{prop21} it follows
$$\omega(u_{x,y},0_{X_{T+2}} )+\mathcal{M}(u_{x,y},0_{X_{T+2}})+\|u_{x,y}\|_T^2=\sum_{n=1}^T\langle h(n)|\, u_{x,y}(n)\rangle\leq|h|_\infty\sqrt{T}\|u_{x,y}\|_T.$$
On the other hand, using the boundary conditions ($u_{x,y}(0)=x$, $u_{x,y}(T+1)=y$), we infer that
$$\langle \! \langle \theta(x,y)|(x,y)\rangle \! \rangle=-\omega(u_{x,y},0_{X_{T+2}})$$
and, as $\mathcal{M}(u_{x,y},0_{X_{T+2}})\geq0$, one get
\begin{equation}\label{thetaest1}
\langle \! \langle \theta(x,y)|(x,y)\rangle \! \rangle\geq\|u_{x,y}\|_T\left(\|u_{x,y}\|_T-|h|_\infty\sqrt{T}\right).
\end{equation}
From \eqref{estimarew}, it follows
\begin{equation*}
|u_{x,y}(n)| \leq \frac{1}{\sqrt{T}}\, \|u_{x,y}\|_T+T a \qquad (n\in\mathbb{Z}[0,T+1]).
\end{equation*}
Since $x=u_{x,y}(0)$, $y=u_{x,y}(T+1)$, this yields
$$\max\left\{|x|,\, |y|\right\} \leq \frac{1}{\sqrt{T}}\, \|u_{x,y}\|_T+T a$$
and, hence
$$\|(x,y)\|= \sqrt{|x|^2+|y|^2}\leq \sqrt{2}\left(\frac{1}{\sqrt{T}}\, \|u_{x,y}\|_T+T a \right),$$
or
\begin{equation*}
\|u_{x,y}\|_T \geq \sqrt{T} \left(\frac{1}{\sqrt{2}}\, \|(x,y)\|-Ta \right).
\end{equation*}
Then, for $(x,y)\in D_{(T+1)a}$ with $\|(x,y)\|>\sqrt{2} (Ta +|h|_{\infty})$, this together with \eqref{thetaest1} give
$$ \displaystyle \langle \! \langle \theta(x,y)|(x,y)\rangle \! \rangle \geq T\left(\frac{1}{\sqrt{2}}\|(x,y)\|-Ta\right)\left (\frac{1}{\sqrt{2}}\|(x,y)\|-Ta-|h|_{\infty}\right),$$
which clearly implies \eqref{coerciv} and the proof is complete.\cqfd
\medskip

Now, we can state the existence and uniqueness result for problem $[Q_\gamma (h)]$.

\begin{theorem}\label{thsolQ}
If $\gamma$ satisfies ($H_{\gamma}$), then problem $[Q_\gamma (h)]$ has a unique solution $u_h\in X_{T+2}$, for any $h\in X_T$.
\end{theorem}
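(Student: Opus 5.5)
Uniqueness is already given by Proposition \ref{propunic}, so the task is existence. The plan is to reduce $[Q_\gamma(h)]$ to an inclusion in $\mathbb{R}^N\times\mathbb{R}^N$ involving the operator $\theta$ of Proposition \ref{optheta}. A function $u$ solves $[Q_\gamma(h)]$ exactly when $(x,y):=(u(0),u(T+1))\in D_{(T+1)a}\cap D(\gamma)$, $u=u_{x,y}$ (the Dirichlet solution of Proposition \ref{teu}(i)), and
$$\left(\phi(\Delta u_{x,y}(0)),-\phi(\Delta u_{x,y}(T))\right)\in\gamma(x,y).$$
By the definition of $\theta$ this inclusion reads $-\theta(x,y)\in\gamma(x,y)$, that is,
$$0_{\mathbb{R}^N\times\mathbb{R}^N}\in(\theta+\gamma)(x,y).$$
Conversely, if $(x,y)$ satisfies this inclusion, then $(x,y)\in D(\theta)\cap D(\gamma)$, and $u_{x,y}$ is a solution: it has $\|\Delta u_{x,y}\|_\infty<a$ since it solves the Dirichlet problem, the equation holds, and the boundary inclusion is exactly the one written above. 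So it suffices to show that $0\in R(\theta+\gamma)$.

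For surjectivity I would use the classical perturbation and range results for maximal monotone operators in $\mathbb{R}^{2N}$. First, $\theta+\gamma$ is maximal monotone. By Rockafellar's sum theorem this holds as soon as $\operatorname{int}D(\theta)\cap D(\gamma)\neq\emptyset$. Here $D(\theta)=D_{(T+1)a}$ is open and contains $0$, and $0\in D(\gamma)$ by ($H_\gamma$), so the qualification condition is satisfied.

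Second, $\theta+\gamma$ is coercive. Since $0\in\gamma(0)$, monotonicity of $\gamma$ gives $\langle\!\langle z'|z\rangle\!\rangle\ge0$ for every $z'\in\gamma(z)$. Hence for $w\in\theta(z)+\gamma(z)$ we have $\langle\!\langle w|z\rangle\!\rangle\ge\langle\!\langle\theta(z)|z\rangle\!\rangle$, and the coercivity \eqref{coerciv} of $\theta$ transfers to the sum.

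A maximal monotone, coercive operator in finite dimensions is surjective (e.g. \cite[Corollary 2.4]{Br}, or Minty–Browder). Therefore $0\in R(\theta+\gamma)$, which yields $(x,y)$ and then the solution $u_h=u_{x,y}$.

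The main point to check carefully is the sum step. I need the exact form of the domain condition in the reference used; Brezis states it as $\operatorname{int}D(\theta)\cap D(\gamma)\neq\emptyset$, which holds here. I also need to confirm that a point $(x,y)$ obtained from the inclusion automatically lies in $D_{(T+1)a}$, so that $u_{x,y}$ is defined. This is automatic, because $(x,y)\in D(\theta+\gamma)\subset D(\theta)$. Everything else is bookkeeping with the definitions of $\theta$ and of a solution.
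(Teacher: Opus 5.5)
Your proposal is correct and follows essentially the same argument as the paper. Both reduce the problem to $0\in(\gamma+\theta)(x,y)$ on $D(\gamma)\cap D_{(T+1)a}$, get maximal monotonicity of the sum from Brezis using $0\in D(\gamma)\cap\mbox{int}\,D(\theta)$, transfer coercivity from $\theta$ via $0\in\gamma(0)$, and conclude surjectivity by \cite[Corollaire 2.4]{Br}. The only cosmetic difference is that the paper cites Corollaire 2.2 separately when $D(\Gamma)$ is bounded, a case your coercivity argument covers vacuously.
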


\noindent\proof
The proof follows the outline of the one in \cite[Theorem 2.8]{Je}; we give a sketch for the convenience of the reader. Thus,
Proposition \ref{propunic} ensures the uniqueness part. For the existence part, let $\Gamma : \mathbb{R}^N \times \mathbb{R}^N \to 2^{\mathbb{R}^N \times \mathbb{R}^N}$ be defined by
$$D(\Gamma):=D(\gamma) \cap D_{(T+1)a}, \qquad \Gamma (x,y):= \gamma (x,y)+ \theta (x,y)\quad \left((x,y)\in D(\gamma)\cap D_{(T+1)a}\right).$$
From \cite[Corollaire 2.7]{Br}, taking into account that $\theta$ is maximal monotone by Proposition \ref{optheta} and $$D(\gamma) \cap \mbox{int}\, D(\theta)=D(\gamma)\cap D_{(T+1)a}\ni 0_{\mathbb{R}^N \times \mathbb{R}^N},$$ we have that
$\Gamma$ is maximal monotone.
\medskip

Next, we show that $\Gamma: D(\Gamma) \to 2^{\mathbb{R}^N \times \mathbb{R}^N}$ is surjective. This follows by \cite[Corollaire 2.2]{Br} in the case when $D(\Gamma)$ is bounded. If this is not the case, Corollaire 2.4 in \cite{Br} (also, see \cite[Corollary 32.35]{Ze}) will ensure the surjectivity, provided that operator $\Gamma$ is coercive, that is
\begin{equation}\label{coercivGam}
\lim_{\scriptsize \begin{array}{crlc}
&\|(x,y)\| \to +\infty \\
&(x,y) \in D(\Gamma)
\end{array}} \! \!
{\frac{\displaystyle \inf \left \{ \langle \! \langle (\xi,\eta)|(x,y)\rangle \! \rangle\,:\, (\xi, \eta ) \in \Gamma (x,y) \right \} }{\|(x,y)\|}=+\infty}.
\end{equation}

So, let $(x,y)\in D(\Gamma)$ and $(\xi,\eta)\in\Gamma(x,y)$. Taking $(\zeta,\nu)\in\gamma(x,y)$ such that
$(\xi,\eta )=(\zeta,\nu)+\theta(x,y)$ and using the fact that $\gamma$ is monotone and $0_{\mathbb{R}^N\times\mathbb{R}^N}\in\gamma(0_{\mathbb{R}^N \times\mathbb{R}^N})$, we obtain
$$\langle \! \langle (\xi,\eta)|(x,y)\rangle \! \rangle= \langle \! \langle (\zeta,\nu)-0_{\mathbb{R}^N \times \mathbb{R}^N}|(x,y)-0_{\mathbb{R}^N \times \mathbb{R}^N}\rangle \! \rangle + \langle \! \langle \theta(x,y)|(x,y)\rangle \! \rangle \geq \langle \! \langle \theta(x,y)|(x,y)\rangle \! \rangle,$$
which implies
$$\displaystyle \inf \left \{ \langle \! \langle (\xi,\eta)|(x,y)\rangle \! \rangle\, : \, (\xi, \eta ) \in \Gamma (x,y) \right \} \geq  \langle \! \langle \theta(x,y)|(x,y)\rangle \! \rangle.$$
This together with the coercivity of $\theta$ (Proposition \ref{optheta}), yields \eqref{coercivGam}.
\medskip

Now, from the surjectivity of $\Gamma$, there exists $(x,y)\in D(\gamma) \cap D_{(T+1)a}$ such that
$$0_{\mathbb{R}^N \times \mathbb{R}^N}\in\Gamma(x,y)=\gamma(x,y)+\theta(x,y)=\gamma(x,y)+\left(-\phi(\Delta u_{x,y}(0)),\ \phi(\Delta u_{x,y}(T))\right),$$
i.e.,
$$\left(\phi(\Delta u_{x,y}(0)),\ -\phi(\Delta u_{x,y}(T))\right)\in \gamma(x,y)=\gamma(u_{x,y}(0), u_{x,y}(T+1))$$
and the proof is accomplished by taking $u_h=u_{x,y}$. \cqfd
\bigskip

\section{Existence of solutions by a priori estimates}

In this section we assume that hypotheses ($H_{\Phi}$), ($H_{\gamma}$) are fulfilled and we are mainly interested in the solvability of the general problem \eqref{sysgen}-\eqref{bcsysgen}, that is
\begin{equation}\label{bvpf}
  \left\{
  \begin{array}{lcl}
    -\Delta\left[\phi(\Delta u(n-1))\right]=f(n,u(0),u(1),\ldots,u(T+1)) & \hbox{$\quad (n \in \mathbb{Z}[1,T]),$} \\
    \left(\phi(\Delta u(0)),-\phi(\Delta u(T))\right)\in\gamma(u(0),u(T+1)),
  \end{array}
  \right.
\end{equation}
under the assumption that $f$ satisfies hypothesis ($H_{f}$).
\medskip

According to Theorem \ref{thsolQ}, problem $[Q_\gamma(h)]$ has a unique solution for any $h\in X_T$. This enables us to define the {\it solution operator} $S_{\gamma}:X_T \to X_{T+2}$, by setting
$$S_{\gamma}(h):= u_h - \mbox{the unique solution of $[Q_\gamma(h)]$}.$$

\begin{proposition}\label{Sgammacont}
The operator $S_{\gamma}$ is continuous.
\end{proposition}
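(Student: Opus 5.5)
To prove Proposition \ref{Sgammacont}, we take $h_k \to h$ in $X_T$ and set $u_k := S_\gamma(h_k)$, $u := S_\gamma(h)$. The plan is to first obtain convergence of $u_k$ to $u$ in the $\|\cdot\|_T$-norm (i.e.\ on the interior nodes $\mathbb{Z}[1,T]$). We then recover the two boundary values $u_k(0)$, $u_k(T+1)$ by a compactness and uniqueness argument.

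The first step uses inequality \eqref{est} of Remark \ref{remMo}, with $u_k$ a solution of $[Q_\gamma(h_k)]$ and $u$ a solution of $[Q_\gamma(h)]$. Both satisfy the boundary condition, so $\omega(u_k,u)\ge 0$ by the monotonicity of $\gamma$, and ${\mathcal M}(u_k,u)\ge 0$. Hence
$$\|u_k-u\|_T^2 \le {\mathcal M}(u_k,u)+\omega(u_k,u)+\|u_k-u\|_T^2 \le \sqrt{T}\,|h_k-h|_\infty \|u_k-u\|_T ,$$
which gives $\|u_k-u\|_T \le \sqrt{T}|h_k-h|_\infty \to 0$. The same chain also yields ${\mathcal M}(u_k,u)\to 0$ and $\omega(u_k,u)\to 0$, which may be useful.

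The main obstacle is the boundary values, since $\|\cdot\|_T$ does not see $u(0)$ or $u(T+1)$. First, Proposition \ref{estni} together with $\|\Delta u_k\|_\infty<a$ gives $|u_k(m)|\le \|u_k\|_T/\sqrt{T}+Ta$, so $\{u_k\}$ is bounded in $X_{T+2}$. It therefore suffices to show that every subsequence has a further subsequence converging to $u$. Take a subsequence with $u_k\to v$ in $X_{T+2}$; then $v(n)=u(n)$ for $n\in\mathbb{Z}[1,T]$ and $\|\Delta v\|_\infty\le a$. The delicate point is to exclude $|\Delta v(0)|=a$ or $|\Delta v(T)|=a$.

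We handle it as follows. Summing the equation from $1$ to $n$ gives
$$\phi(\Delta u_k(n)) = \phi(\Delta u_k(0)) - \sum_{j=1}^n (h_k(j)-u_k(j)).$$
The interior differences $\Delta u_k(n)$, $n\in\mathbb{Z}[1,T-1]$, converge to $\Delta u(n)$, which lies in $B_a$. (If $T=1$, we instead use $\omega(u_k,u)\to 0$.) Because $\phi$ is a homeomorphism onto $\mathbb{R}^N$, these interior values $\phi(\Delta u_k(n))$ converge, and the identity above then shows that $\phi(\Delta u_k(0))$ and $\phi(\Delta u_k(T))$ are bounded. Consequently $\Delta u_k(0)=\phi^{-1}(\phi(\Delta u_k(0)))$ and $\Delta u_k(T)$ stay in a compact subset of $B_a$, so $|\Delta v(0)|,|\Delta v(T)|<a$. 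Passing to a further subsequence, $\phi(\Delta u_k(0))\to\phi(\Delta v(0))$ and $\phi(\Delta u_k(T))\to\phi(\Delta v(T))$.

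Passing to the limit, $v$ satisfies the difference equation with right-hand side $h$. Since a maximal monotone operator has closed graph, $v$ also satisfies the boundary condition $(\phi(\Delta v(0)),-\phi(\Delta v(T)))\in\gamma(v(0),v(T+1))$. Thus $v$ solves $[Q_\gamma(h)]$, and uniqueness (Proposition \ref{propunic}) gives $v=u$. Since every subsequence has a further subsequence converging to $u$, the whole sequence converges, i.e.\ $S_\gamma(h_k)\to S_\gamma(h)$ in $X_{T+2}$.
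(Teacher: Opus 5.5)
Your argument is correct for $T\ge 2$, which is the paper's setting: the notation $\mathbb{Z}[1,T]$ requires $1<T$, and the paper's own proof uses $u(2)$ as an interior node. Your first step is identical to the paper's, namely $\|u_k-u\|_T\le\sqrt{T}\,|h_k-h|_\infty$ from \eqref{est}.

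The difference is how you recover the boundary values. The paper reads the equation at $n=1$ as $\phi(\Delta u_k(0))=\phi(\Delta u_k(1))+h_k(1)-u_k(1)$. Since $\Delta u_k(1)=u_k(2)-u_k(1)\to\Delta u(1)\in B_a$, the right-hand side converges to $\phi(\Delta u(0))$. Continuity of $\phi^{-1}$ then gives $\Delta u_k(0)\to\Delta u(0)$, hence $u_k(0)=u_k(1)-\Delta u_k(0)\to u(0)$ directly; the endpoint $T+1$ is handled the same way using $n=T$.

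Your own identity with $n=1$ already yields this convergence, not merely boundedness. So the rest of your route is valid but superfluous: the a priori bound from Proposition \ref{estni}, the extraction, the closedness of the graph of $\gamma$, and uniqueness via Proposition \ref{propunic}. The paper's route buys brevity, and its second step uses neither maximality of $\gamma$ nor uniqueness. Your compactness--closed-graph--uniqueness scheme buys robustness: it would still work if you could only bound, and not identify, the limit of $\phi(\Delta u_k(0))$.

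Your parenthetical treatment of $T=1$ (``use $\omega(u_k,u)\to 0$'') is not an argument as written. It is not needed for the statement. If you want to cover that case, use instead that each nonnegative summand of $\mathcal{M}(u_k,u)$ tends to $0$. Suppose $\Delta u_k(0)$ had a limit point $y\neq\Delta u(0)$ in $\overline{B}_a$. Combine the inequality $\langle \phi(\Delta u_k(0))|\Delta u_k(0)-\Delta u(0)\rangle\ge\Phi(\Delta u_k(0))-\Phi(\Delta u(0))$ with the continuity and strict convexity of $\Phi$ on $\overline{B}_a$. Along that subsequence, the first summand would then have $\liminf\ge \Phi(y)-\Phi(\Delta u(0))-\langle\phi(\Delta u(0))|y-\Delta u(0)\rangle>0$, a contradiction. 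The same reasoning gives $\Delta u_k(m)\to\Delta u(m)$ for every $m\in\mathbb{Z}[0,T]$, and hence the result for every $T\ge 1$.
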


\noindent\proof Let $\{h_k\} \subset X_T \ni h$ be with $h_k \to h$, as $k \to \infty$. From \eqref{est}, we have
\begin{equation*}\label{cint}
\|u_{h_k}-u_h\|_T \leq \sqrt{T}\, |h_k -h |_{\infty} \qquad (k\in \mathbb{N}),
\end{equation*}
which implies that $u_{h_k}(n) \to u_h(n)$, as $k \to \infty$, for all $n\in \mathbb{Z}[1,T]$. On the other hand, from
\begin{equation*}\label{cext}
-\Delta\left[\phi(\Delta u_{h_k}(0))\right]+\Delta\left[\phi(\Delta u_h(0))\right] +  u_{h_k}(1)- u_h(1) = h_k(1)-h(1)
\end{equation*}
it follows
\begin{equation*}\label{cext0}
\Delta\left[\phi(\Delta u_{h_k}(0))\right] \to \Delta\left[\phi(\Delta u_h(0))\right] , \quad \mbox{ as } k \to \infty.
\end{equation*}
This, together with $u_{h_k}(1)\to u_h(1)$ and $u_{h_k}(2)\to u_h(2)$, yield that $u_{h_k}(0) \to u_{h}(0)$, as $k \to \infty$. A similar reasoning shows that $u_{h_k}(T+1) \to u_{h}(T+1)$, as $k \to \infty$.
Thus, $S_{\gamma}(h_{k}) \to S_{\gamma}(h)$ and the proof is complete. \cqfd

Next, we set
\begin{equation*}\label{defK}
    \mathcal{K}(a,\gamma):=\left\{w\in X_{T+2} \, : \, \|\Delta w\|_\infty<a,\ (w(0),w(T+1))\in D(\gamma)\right\}
\end{equation*}
and define
\begin{equation}\label{lambda1}
  \lambda_1 = \lambda_1(a,\gamma):=\inf\left\{\frac{\displaystyle\sum_{n=1}^{T+1}|\Delta w(n-1)|^2}{\|w\|_T^2} \, : \, w\in \mathcal{K}(a,\gamma),\; w|_{\mathbb{Z}[1,T]}\neq 0_{X_T}\right\}.
\end{equation}

\noindent This first eigenvalue-like constant satisfies $0\leq\lambda_1\leq 2$. Indeed, this is easy to see by noting that $w^\# \in X_{T+2}$ given by
$$w^\#(n):=\left\{
                               \begin{array}{lll}
                                 0_{\mathbb{R}^N} & \hbox{if $\ n=0, 2,\ldots, T+1;$} \\
                                 \;\\
                                 \left(\displaystyle{a}/{2}, \; 0_{\mathbb{R}^{N-1}}\right) & \hbox{if $\ n=1$}
                               \end{array}
                             \right.$$
fulfills $w^\# \in \mathcal{K}(a,\gamma)$, $w^\#|_{\mathbb{Z}[1,T]}\neq 0_{X_T}$.

\begin{proposition}\label{Kmarg}
If $\lambda_1>0$, then $\mathcal{K}(a,\gamma)$ is bounded in $X_{T+2}$.
\end{proposition}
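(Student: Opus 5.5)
From the definition \eqref{lambda1}, for every $w\in\mathcal{K}(a,\gamma)$ we have
$$\lambda_1\|w\|_T^2\leq \sum_{n=1}^{T+1}|\Delta w(n-1)|^2 .$$
This is trivially true when $w|_{\mathbb{Z}[1,T]}=0_{X_T}$, and otherwise it holds by the definition of the infimum. Since $w\in\mathcal{K}(a,\gamma)$ gives $\|\Delta w\|_\infty<a$, the right-hand side is strictly less than $(T+1)a^2$. As $\lambda_1>0$, we can divide and get a uniform bound on the interior values:
$$\|w\|_T< a\sqrt{\frac{T+1}{\lambda_1}}\qquad (w\in\mathcal{K}(a,\gamma)).$$

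Next we control the two boundary values $w(0)$ and $w(T+1)$, which are not seen by $\|\cdot\|_T$. Here Proposition \ref{estni} applies directly, since $\|\Delta w\|_\infty<a\le a$. It gives, for all $m\in\mathbb{Z}[0,T+1]$,
$$|w(m)|\leq \frac{1}{\sqrt T}\|w\|_T+Ta< a\sqrt{\frac{T+1}{T\lambda_1}}+Ta .$$
Alternatively one can argue directly: $w(0)=w(1)-\Delta w(0)$ and $w(T+1)=w(T)+\Delta w(T)$, with $|\Delta w(\cdot)|<a$.

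Summing the squares over $m\in\mathbb{Z}[0,T+1]$ then gives a bound on $\|w\|_{T+2}$ that depends only on $a$, $T$ and $\lambda_1$. This is exactly the boundedness of $\mathcal{K}(a,\gamma)$ in $X_{T+2}$.

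None of these steps is a real obstacle. The only point to watch is the first one: it must be phrased so that it also covers elements with $w|_{\mathbb{Z}[1,T]}=0_{X_T}$, which are excluded from the infimum in \eqref{lambda1}. For those elements the first bound is trivial, and the second step then still bounds $w(0)$ and $w(T+1)$ by $a$.
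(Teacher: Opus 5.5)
Your proof is correct and matches the paper's argument in its key step: the definition of $\lambda_1$ gives $\lambda_1\|w\|_T^2<(T+1)a^2$ on $\mathcal{K}(a,\gamma)$, and this also holds trivially when $w|_{\mathbb{Z}[1,T]}=0_{X_T}$. The two proofs differ only in the last step. The paper notes that $(\sum_{n=1}^{T+1}|\Delta u(n-1)|^2+\lambda_1\|u\|_T^2)^{1/2}$ is a norm on the finite-dimensional space $X_{T+2}$, bounds it by $a\sqrt{2(T+1)}$, and concludes by equivalence of norms. You instead control $w(0)$ and $w(T+1)$ explicitly through Proposition \ref{estni}, which yields the concrete bound the paper records later in \eqref{overl1d}.
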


\noindent\proof As $\lambda_1>0$, one has that $$u\mapsto\left(\sum_{n=1}^{T+1}|\Delta u(n-1)|^2+\lambda_1\|u\|_T^2\right)^{1/2}=:\|u\|_{\lambda_1}$$ is a norm on the finite dimensional space $X_{T+2}$ and using the definition of $\lambda_1$, we have that
$\lambda_1\|w\|_T^2<(T+1)\, a^2$ for all $w\in \mathcal{K}(a,\gamma)$. This implies that all $w\in \mathcal{K}(a,\gamma)$ satisfy $\|w\|_{\lambda_1}\leq a\sqrt{2(T+1)}$.\cqfd

\medskip

We introduce the mapping  $N_{f}:X_{T+2}\to X_T$ by
$$N_{f}(u)(n)=f(n,u(0),u(1),\ldots,u(T+1))+u(n),\quad  (u\in X_{T+2}).$$
It is easy to check that $N_{f}$ is continuous and, by virtue of Proposition \ref{Sgammacont}, one has that the operator
\begin{equation}\label{defScal}
    \mathcal{S}:=S_\gamma\circ N_f:X_{T+2}\to X_{T+2}
\end{equation}
is continuous. Also, it is straightforward to see that $u\in X_{T+2}$ is a solution of problem \eqref{bvpf} iff it is a fixed point of $\mathcal{S}$. Thus, to provide sufficient conditions for the solvability of problem   \eqref{bvpf}, we can apply the a priori estimates method, which in our case can be stated as follows.

\begin{proposition}\label{AEs} If the set
\begin{equation}\label{Adef}
    \mathcal{A}:=\{u\in X_{T+2}\; :\; \exists\ \mu\in(0,1]\mbox{ such that } u=\mu\mathcal{S}(u)\}
\end{equation}
is bounded in $(X_{T+2},\|\cdot\|_{T+2})$, then problem \eqref{bvpf} has at least one solution.
\end{proposition}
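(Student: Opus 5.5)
This is a direct application of Schaefer's fixed point theorem (\cite[Corollary 4.4.12]{Llo}) in the finite dimensional normed space $(X_{T+2},\|\cdot\|_{T+2})$, to the operator $\mathcal{S}=S_\gamma\circ N_f$ defined in \eqref{defScal}. The plan has three steps.

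\textbf{Step 1: $\mathcal{S}$ is continuous and compact.} Continuity was observed just before the statement. It follows from the continuity of $N_f$, which is a consequence of $(H_f)$, together with Proposition \ref{Sgammacont}. For compactness, a bounded subset $B\subset X_{T+2}$ has $N_f(B)$ bounded in $X_T$, since a continuous map on a finite dimensional space sends bounded sets to bounded sets. Then $\mathcal{S}(B)$ is bounded: every element of $\mathcal{S}(B)$ is a solution of some $[Q_\gamma(h)]$. Estimate \eqref{est1}, which is available because of $(H_\gamma)$, gives $\|u_h\|_T\le\sqrt{T}\,|h|_\infty$. Since $\|\Delta u_h\|_\infty<a$, Proposition \ref{estni} then controls $|u_h(0)|$ and $|u_h(T+1)|$. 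Alternatively, one can simply note that a continuous map into a finite dimensional space is compact, via continuity on the closure of $B$. Either way, $\mathcal{S}$ maps bounded sets onto relatively compact sets.

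\textbf{Step 2: apply Schaefer's theorem.} The theorem states that if $\mathcal{S}$ is continuous and compact and the set $\{u: u=\mu\mathcal{S}(u),\ \mu\in(0,1]\}$ is bounded, then $\mathcal{S}$ has a fixed point. This set is exactly $\mathcal{A}$ from \eqref{Adef}, which is assumed bounded. Hence $\mathcal{S}$ has a fixed point $u$.

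\textbf{Step 3: the fixed point solves \eqref{bvpf}.} Since $u=S_\gamma(N_f(u))$, $u$ is the solution of $[Q_\gamma(h)]$ with $h(n)=f(n,u(0),\dots,u(T+1))+u(n)$. Substituting, the term $u(n)$ cancels from both sides of the equation. So $u$ solves \eqref{sysgen} together with the boundary condition \eqref{bcsysgen}, and also satisfies $\|\Delta u\|_\infty<a$ and $(u(0),u(T+1))\in D(\gamma)$.

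None of these steps is a real obstacle. The only point needing care is matching the precise form of Schaefer's theorem used, where the parameter range is $(0,1]$ as opposed to $[0,1]$. When $\mu=0$ the only solution of $u=\mu\mathcal{S}(u)$ is $u=0$, so including $\mu=0$ does not affect boundedness and either version applies.
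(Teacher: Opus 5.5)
Your proposal is correct and follows the paper's route: the paper's proof is just ``immediate from Schaefer's theorem,'' relying on the continuity of $\mathcal{S}=S_\gamma\circ N_f$ (Proposition \ref{Sgammacont}) and the equivalence between fixed points of $\mathcal{S}$ and solutions of \eqref{bvpf}, both stated just before the proposition. You make explicit the compactness (automatic in finite dimensions), the cancellation of the $u(n)$ term, and why $(0,1]$ versus $[0,1]$ in Schaefer's theorem does not matter; the paper leaves these implicit.
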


\noindent\proof This is immediate from Schaefer's theorem.\cqfd

\begin{theorem}\label{thf}
If $\lambda_1>0$, then problem \eqref{bvpf} has at least one solution.
\end{theorem}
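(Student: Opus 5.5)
We will use Proposition \ref{AEs}: it suffices to show that the set $\mathcal{A}$ defined in \eqref{Adef} is bounded in $(X_{T+2},\|\cdot\|_{T+2})$. The plan is to check that every $u\in\mathcal{A}$ belongs to $\mathcal{K}(a,\gamma)$, and then to invoke Proposition \ref{Kmarg}. Since $\lambda_1>0$, that proposition says $\mathcal{K}(a,\gamma)$ is bounded.

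Let $u\in\mathcal{A}$, so $u=\mu v$ with $\mu\in(0,1]$ and $v=\mathcal{S}(u)=S_\gamma(N_f(u))$. By Theorem \ref{thsolQ}, $v$ is the solution of $[Q_\gamma(N_f(u))]$. In particular, by the definition of a solution, $\|\Delta v\|_\infty<a$ and $(v(0),v(T+1))\in D(\gamma)$. Two facts then need to be checked.

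\emph{Step 1.} We have $\|\Delta u\|_\infty=\mu\|\Delta v\|_\infty<a$. This is immediate.

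\emph{Step 2.} We need $(u(0),u(T+1))=\mu\,(v(0),v(T+1))\in D(\gamma)$. This is the only step with content, since $D(\gamma)$ is not assumed to be star-shaped with respect to the origin. The tool is the standard fact that the closure of the domain of a maximal monotone operator in a finite dimensional space is convex (Br\'ezis; see \cite{Br}). Since $0\in D(\gamma)$, it follows that $\mu z\in\overline{D(\gamma)}$ for every $z\in D(\gamma)$, but possibly not $\mu z\in D(\gamma)$ itself.

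This is the main obstacle, and I would handle it by working with the closure. Replace $\mathcal{K}(a,\gamma)$ by the larger set
$$\widetilde{\mathcal{K}}:=\{w\in X_{T+2}: \|\Delta w\|_\infty<a,\ (w(0),w(T+1))\in \overline{D(\gamma)}\}.$$
I would then show it is bounded under the same hypothesis $\lambda_1>0$, as follows. Take $w\in\widetilde{\mathcal{K}}$ with $w|_{\mathbb{Z}[1,T]}\neq 0$. Approximate $(w(0),w(T+1))$ by points $(x_k,y_k)\in D(\gamma)$. Define $w_k$ to equal $w$ on $\mathbb{Z}[1,T]$ and to take the values $x_k,y_k$ at $n=0$ and $n=T+1$. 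Since $|\Delta w(0)|<a$ and $|\Delta w(T)|<a$ are strict inequalities, $w_k\in\mathcal{K}(a,\gamma)$ for $k$ large. The definition \eqref{lambda1} then gives
$$\sum_{n=1}^{T+1}|\Delta w_k(n-1)|^2\ge\lambda_1\|w\|_T^2 .$$
Letting $k\to\infty$ yields the same inequality for $w$, and hence $\lambda_1\|w\|_T^2<(T+1)a^2$.

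From this, the argument of Proposition \ref{Kmarg} gives a uniform bound on $\|w\|_{\lambda_1}$ over $\widetilde{\mathcal{K}}$. Since $\|\cdot\|_{\lambda_1}$ is a norm on the finite dimensional space $X_{T+2}$, this bounds $\widetilde{\mathcal{K}}$ in $\|\cdot\|_{T+2}$. Combined with Steps 1 and 2, $\mathcal{A}\subset\widetilde{\mathcal{K}}$ is bounded, and Proposition \ref{AEs} yields a solution of \eqref{bvpf}.

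This argument uses no growth condition on $f$: the singularity of $\phi$ bounds the differences $\Delta u$, and $\lambda_1>0$ turns that into a bound on $u$ itself.
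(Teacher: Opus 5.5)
Your argument is correct, but the obstacle you isolate in Step~2 is self-imposed, and the paper avoids it entirely. The paper does not try to put $u$ into $\mathcal{K}(a,\gamma)$. It works instead with $v=\mathcal{S}(u)=\mu^{-1}u$. Since $v$ solves $[Q_\gamma(N_f(u))]$, one has $v\in\mathcal{K}(a,\gamma)$ by the very definition of a solution. Proposition~\ref{Kmarg} then bounds $v$, and $\|u\|_{T+2}=\mu\|v\|_{T+2}\le\|v\|_{T+2}$ because $\mu\in(0,1]$. In other words, $\mathcal{A}\subset\bigcup_{\mu\in(0,1]}\mu\,\mathcal{K}(a,\gamma)$, which is bounded as soon as $\mathcal{K}(a,\gamma)$ is, with no star-shapedness of $D(\gamma)$ required.

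Your detour is sound. You use the convexity of $\overline{D(\gamma)}$ for a maximal monotone $\gamma$, then the endpoint-replacement approximation to show the larger set $\widetilde{\mathcal{K}}$ is still bounded. That approximation is the same device the paper uses in Step~1 of Proposition~\ref{Kmarg1}. Its only extra payoff is the by-product that $\lambda_1$ does not change when $D(\gamma)$ is replaced by its closure. For Theorem~\ref{thf} itself, though, it imports a nontrivial fact from monotone operator theory into what is a one-line argument.

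Your caution is not empty: $D(\gamma)$ need not be star-shaped about the origin, so $u$ may genuinely fail to lie in $\mathcal{K}(a,\gamma)$. It just does not matter for the bound on $\mathcal{A}$.
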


\noindent\proof Let $u\in X_{T+2}$ be such that $\mathcal{S}(u)=\mu^{-1}u$, with some $\mu\in(0,1]$. From \eqref{defScal} and definition of the operator $S_{\gamma}$, in particular this means that
 $\|\Delta\mu^{-1}u\|_\infty<a$, $(\mu^{-1}u(0), \mu^{-1}u(T+1))\in D(\gamma)$, i.e. $\mu^{-1}u\in\mathcal{K}(a,\gamma)$. From Proposition \ref{Kmarg} we deduce that the set $\mathcal{A}$ in \eqref{Adef} is bounded and the conclusion follows by Proposition \ref{AEs}.\cqfd

Let $cone\, D(\gamma)$ be the \textit{conical hull} of the set $D(\gamma)$, that is
$$cone\, D(\gamma) := \{ \alpha z \, : \, \alpha \geq 0 , \, z\in D( \gamma )\}.$$

\begin{remark}\label{remcondsuf}
{\em A sufficient condition for having that $\lambda_1>0$ is
\begin{equation}\label{conDgamma}
\overline{cone \,D(\gamma)} \cap d_N^1 = \{ 0_{\mathbb{R}^N \times \mathbb{R}^N} \}.
\end{equation}
Indeed, setting
${\mathcal K}^\#:= \{w\in X_{T+2} \, : \, (w(0), w(T+1))\in \overline{cone\, D(\gamma)}\}$ and
$$\lambda_1^\#:=\inf\left\{\frac{\displaystyle\sum_{n=1}^{T+1}|\Delta w(n-1)|^2}{\|w\|_{T+2}^2} \, : \,  w\in {\mathcal K}^\#(\gamma)\setminus \{ 0_{X_{T+2}}\} \right \},$$
by virtue of \cite[Corollary 2.2]{JePr} and \eqref{conDgamma} it is not difficult to see that $\lambda_1^\#>0$ and the conclusion follows from $\lambda_1\geq \lambda_1^\#.$}
\end{remark}

An immediate consequence of Theorem \ref{thf} and Remark \ref{remcondsuf} is the following

\begin{corollary}\label{tcorf} If \eqref{conDgamma} holds true, then
\eqref{bvpf} has at least one solution.
\end{corollary}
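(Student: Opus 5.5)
The corollary follows by chaining Remark \ref{remcondsuf} with Theorem \ref{thf}. Assume \eqref{conDgamma}. The first step is to show that $\lambda_1=\lambda_1(a,\gamma)>0$. After that, Theorem \ref{thf} yields a solution of \eqref{bvpf} at once. That theorem in turn rests on Proposition \ref{Kmarg}, which gives boundedness of $\mathcal{K}(a,\gamma)$; this makes the set $\mathcal{A}$ bounded, and Schaefer's theorem then applies through Proposition \ref{AEs}.

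To prove $\lambda_1>0$, I follow Remark \ref{remcondsuf}. Set $C:=\overline{cone\, D(\gamma)}$, which is a closed cone, and put $\mathcal{K}^\#:=\{w\in X_{T+2}: (w(0),w(T+1))\in C\}$. Since $D(\gamma)\subset C$, we have $\mathcal{K}(a,\gamma)\subset\mathcal{K}^\#$. Moreover $\|w\|_T\le\|w\|_{T+2}$, so $\lambda_1\ge\lambda_1^\#$, where $\lambda_1^\#$ is the infimum of $\sum_{n=1}^{T+1}|\Delta w(n-1)|^2/\|w\|_{T+2}^2$ over $\mathcal{K}^\#\setminus\{0\}$. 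It therefore suffices to show $\lambda_1^\#>0$.

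For this I use homogeneity and compactness instead of citing \cite{JePr}. The set $\mathcal{K}^\#$ is a closed cone in the finite dimensional space $X_{T+2}$, and the quotient is $0$-homogeneous. Hence $\lambda_1^\#$ is the minimum of the continuous function $w\mapsto\sum|\Delta w(n-1)|^2$ over the compact set $\mathcal{K}^\#\cap\{\|w\|_{T+2}=1\}$. This set is nonempty: the function $w^\#$ from before lies in $\mathcal{K}^\#$, since $0\in D(\gamma)$. Suppose the minimum were $0$. Then the minimizer $w$ would have $\Delta w\equiv0$, so $w$ is constant, $w\equiv x$, with $x\neq0$ because $\|w\|_{T+2}=1$. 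This gives $(x,x)\in C\cap d_N^1$ with $x\ne0$, contradicting \eqref{conDgamma}. Hence $\lambda_1^\#>0$.

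The only delicate point is that the infimum in $\lambda_1$ is taken over $w$ with $w|_{\mathbb{Z}[1,T]}\neq0$ and normalized by $\|w\|_T$, whereas $\lambda_1^\#$ uses $\|w\|_{T+2}$. The comparison $\|w\|_T^2\le\|w\|_{T+2}^2$ handles this, because for such $w$ we have $\|w\|_{T+2}\ge\|w\|_T>0$, and so each quotient in $\lambda_1$ dominates the corresponding quotient in $\lambda_1^\#$. With $\lambda_1>0$ established, Theorem \ref{thf} concludes the proof.
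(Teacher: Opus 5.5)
Your proof is correct and follows the paper's route: the paper gets the corollary as an immediate consequence of Theorem \ref{thf} and Remark \ref{remcondsuf}, via $\lambda_1\geq\lambda_1^\#>0$. The only difference is that you prove $\lambda_1^\#>0$ directly, using $0$-homogeneity of the quotient and compactness of $\mathcal{K}^\#\cap\{\|w\|_{T+2}=1\}$ in the finite dimensional space $X_{T+2}$, where the paper instead cites \cite[Corollary 2.2]{JePr}; this makes your argument self-contained.
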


\begin{example}\label{exantip}
\emph{For any $\alpha,\beta\in \mathbb{R}$, $\alpha \neq \beta$ and  $f$ satisfying hypothesis ($H_{f}$), problem
\begin{equation}\label{exalphab}
  \left\{
  \begin{array}{lcl}
    -\Delta\left[\phi(\Delta u(n-1))\right]=f(n, u(0), \dots , u(T+1)) & \hbox{$\quad (n \in \mathbb{Z}[1,T]),$} \\
    \alpha u(0)=\beta u(T+1), \; \beta \phi( \Delta u (0))=\alpha \phi(\Delta u(T))
  \end{array}
  \right.
\end{equation}
has at least one solution. To see this, consider the subspace
$Y=\{ (x,y) \in \mathbb{R}^N \times \mathbb{R}^N \, : \, \alpha x=\beta y \}$
and take $\gamma := \partial I_Y$. We know that
$$\partial I_Y(z)=N_Y(z)=Y^{\perp}=\{ (x,y) \in \mathbb{R}^N \times \mathbb{R}^N \, : \, \beta x=-\alpha y \} \quad (z\in Y),$$
and Corollary \ref{tcorf} applies.}
\end{example}

\medskip
\begin{remark}\label{remunivres}
{\em Existence of a solution under solely assumption $\lambda_1>0$  is a "universal" existence
result in the sense that no additional assumptions on $f$ in problem \eqref{bvpf} are needed. In this idea, the reader will notice that for any $f$ satisfying hypothesis $(H_{f})$,  system \eqref{sysgen} always has at least one solution that satisfies one of the classical boundary conditions: Dirichlet \eqref{dirhdis}, mixed Dirichlet-Neumann \eqref{mixedis}, antiperiodic \eqref{aperidis} or rotating periodic \eqref{Qperidis} -- if ker$(\mathbb{I}-\mathbb{U})=\{ 0_{\mathbb{R}^N} \}$.}
\end{remark}

\smallskip

In the case of Neumann boundary conditions \eqref{neuhdis}, periodic \eqref{peridis} or rotating periodic \eqref{Qperidis} -- with ker$(\mathbb{I}-\mathbb{U})\neq\{ 0_{\mathbb{R}^N} \}$, it is easy to see that $\lambda_1=0$, so Theorem \ref{thf} as well as Corollary
\ref{tcorf} are inoperative. In what follows, we aim to formulate conditions on $f$ that ensure the solvability of problem \eqref{bvpf}, without requiring the evaluation of the positivity of $\lambda_1.$

\smallskip

\begin{theorem}\label{thf1}
Assume that there is a real matrix $\displaystyle (c_{ij})_{i,j\in\mathbb{Z}[1,T]}$ with $\displaystyle\sum_{i=1}^T c_{ij}<1$ for all $j\in\mathbb{Z}[1,T]$ and a constant $c\in\mathbb{R}$, such that
\begin{equation}\label{condf}
    \langle f(n,x^0,\ldots,x^{T+1})|x^n\rangle\leq(c_{nn}-1)|x^n|^2+\sum_{j=1,j\neq n}^T c_{nj}|x^j|^2+c,
\end{equation}
for all $n\in\mathbb{Z}[1,T]$ and $(x^0,\ldots,x^{T+1})\in(\mathbb{R}^N)^{T+2}$.
Then, problem \eqref{bvpf} has at least one solution.
\end{theorem}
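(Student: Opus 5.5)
We use the a priori estimates method, Proposition \ref{AEs}: it suffices to show that the set $\mathcal{A}$ in \eqref{Adef} is bounded in $X_{T+2}$.

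Fix $u\in\mathcal{A}$, so $u=\mu\mathcal{S}(u)$ for some $\mu\in(0,1]$. Put $v:=\mu^{-1}u=S_\gamma(N_f(u))$. Then $v$ is the solution of $[Q_\gamma(N_f(u))]$:
$$-\Delta[\phi(\Delta v(n-1))]+v(n)=f(n,u(0),\dots,u(T+1))+u(n)\quad (n\in\mathbb{Z}[1,T]),$$
and $v$ satisfies the boundary condition, so $\|\Delta v\|_\infty<a$.

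\textbf{Energy identity.} Multiply the equation by $v(n)$ and sum over $n\in\mathbb{Z}[1,T]$. Since $0\in\gamma(0)$, the estimate \eqref{est1} applies with $h=N_f(u)$ and gives
$$\mathcal{M}(v,0)+\omega(v,0)+\|v\|_T^2=\sum_{n=1}^T\langle f(n,u(0),\dots,u(T+1))+u(n)\,|\,v(n)\rangle .$$
Both $\mathcal{M}(v,0)$ and $\omega(v,0)$ are nonnegative, and $v=\mu^{-1}u$. Multiplying by $\mu^2$ therefore yields
$$\|u\|_T^2\le \mu\sum_{n=1}^T\langle f(n,u(0),\dots,u(T+1))\,|\,u(n)\rangle+\mu\|u\|_T^2 .$$

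\textbf{Using \eqref{condf}.} Apply \eqref{condf} with $x^j=u(j)$ and sum over $n$. After exchanging the order of summation,
$$\sum_{n=1}^T\langle f(n,\dots)\,|\,u(n)\rangle\le \sum_{j=1}^T\Big(\sum_{i=1}^T c_{ij}\Big)|u(j)|^2-\|u\|_T^2+Tc .$$
Set $\delta:=1-\max_j\sum_i c_{ij}>0$. Then $\sum_j\big(\sum_i c_{ij}\big)|u(j)|^2\le(1-\delta)\|u\|_T^2$. Substituting into the previous inequality and using $\mu\le 1$,
$$\|u\|_T^2\le \mu\big((1-\delta)\|u\|_T^2-\|u\|_T^2+T|c|\big)+\mu\|u\|_T^2=\mu(1-\delta)\|u\|_T^2+\mu T|c|\le (1-\delta)\|u\|_T^2+T|c| .$$
Hence $\|u\|_T^2\le T|c|/\delta$.

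\textbf{Bounding the endpoints.} This is the main point of the argument: the values $u(0)$ and $u(T+1)$ are not controlled by $\|u\|_T$ alone. They are handled by the singular character of $\phi$. Since $\|\Delta v\|_\infty<a$, we have $\|\Delta u\|_\infty=\mu\|\Delta v\|_\infty<a$. Proposition \ref{estni} then gives
$$|u(m)|\le T^{-1/2}\|u\|_T+Ta\quad (m\in\mathbb{Z}[0,T+1]),$$
a bound independent of $u$ and $\mu$. Thus $\mathcal{A}$ is bounded, and Proposition \ref{AEs} yields a solution of \eqref{bvpf}.

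The only delicate points are the sign bookkeeping after multiplying by $\mu^2$ and the index exchange in the double sum. The case $c<0$ is harmless, which is why $|c|$ is used above.
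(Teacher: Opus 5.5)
Your proposal is correct and follows the paper's proof essentially step for step: you use the identity \eqref{est1} for $\mu^{-1}u$, drop $\mathcal{M}$ and $\omega$, bound the double sum by the column sums via \eqref{condf}, and control the endpoints with \eqref{estimarew} using $\|\Delta u\|_\infty<a$. One small point: the step $\mu(1-\delta)\|u\|_T^2+\mu T|c|\le(1-\delta)\|u\|_T^2+T|c|$ does not follow from $\mu\le 1$ term by term when $1-\delta<0$, but it still holds because the bracket $(1-\delta)\|u\|_T^2+T|c|$ is at least $\mu^{-1}\|u\|_T^2\ge 0$ by your preceding inequality (the paper sidesteps this by dividing by $\mu$ and using $\|u\|_T^2\le\mu^{-1}\|u\|_T^2$).
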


\noindent\proof According to Proposition \ref{AEs}, it suffices to show that the set $\mathcal{A}$ in \eqref{Adef} is bounded in $(X_{T+2},\|\cdot\|_{T+2})$. Thus, let $u\in X_{T+2}$ be such that $\mathcal{S}(u)=\mu^{-1}u$, with some $\mu\in(0,1]$. From \eqref{defScal} and definition of operators $S_{\gamma}$ and $N_f$, one has that $\mu^{-1}u$ is solution for problem $[Q_\gamma(N_f(u))]$ and, as $0_{\mathbb{R}^N \times \mathbb{R}^N}\in \gamma(0_{\mathbb{R}^N \times \mathbb{R}^N})$, from \eqref{est1} we have that
$$0 \leq {\mathcal M}(\mu^{-1}u,0_{X_{T+2}})+\omega(\mu^{-1}u,0_{X_{T+2}})+\|\mu^{-1}u\|_T^2=\sum_{n=1}^T\langle f(n,u(0),\ldots,u(T+1))+u(n)|\mu^{-1}u(n)\rangle.$$
Then, since ${\mathcal M}(\mu^{-1}u,0_{X_{T+2}})\geq0$, $\omega(\mu^{-1}u,0_{X_{T+2}})\geq0$, using \eqref{condf}, we obtain
\begin{eqnarray*}
  \mu^{-1}\|u\|_T^2 &\leq& \sum_{n=1}^T\left((c_{nn}-1)|u(n)|^2+\sum_{j=1,j\neq n}^T c_{nj}|u(j)|^2+c\right)+\|u\|_T^2 \\
  &=& \sum_{j=1}^T\sum_{n=1}^T c_{nj}|u(j)|^2+c \, T,
\end{eqnarray*}
which, together with $d:=\displaystyle\max_{j\in\mathbb{Z}[1,T]}\sum_{n=1}^T c_{nj}<1$ and $\mu\in(0,1]$, yield
$$\|u\|_T^2\leq d\|u\|_T^2+c\, T,$$
i.e., $\|u\|_T\leq r:=\sqrt{{c \,  T}/{(1-d)}}$.
On the other hand, as $\|\Delta\mu^{-1}u\|_\infty<a$, from \eqref{estimarew} we get that $|u(0)|\leq r/\sqrt{T}+Ta$, respectively $|u(T+1)|\leq r/\sqrt{T}+Ta$. Consequently, $\|u\|_{T+2}\leq\sqrt{2\left(r/\sqrt{T}+Ta\right)^2+r^2}$ and the proof is complete.
\cqfd

\begin{remark}
{\em The conclusion of Theorem \ref{thf1} still remains valid provided that the matrix $(c_{ij})_{i,j\in\mathbb{Z}[1,T]}$ is with $\displaystyle\sum_{j=1}^T c_{ij}<1$ for all $i\in\mathbb{Z}[1,T]$ and, instead of \eqref{condf}, we assume that
\begin{equation*}\label{condf1}
    \langle f(n,x^0,\ldots,x^{T+1})|x^n\rangle\leq(c_{nn}-1)|x^n|^2+\sum_{i=1,i\neq n}^T c_{in}|x^i|^2+c,
\end{equation*}
for all $n\in\mathbb{Z}[1,T]$ and $(x^0,\ldots,x^{T+1})\in(\mathbb{R}^N)^{T+2}$.}
\end{remark}
\medskip

\begin{example}
\emph{Let $(a_{ij})_{i,j\in\mathbb{Z}[1,T]}$ be a real matrix with $\displaystyle\sum_{i=1}^T |a_{ij}|<1$ for all $j\in\mathbb{Z}[1,T]$ and  $h\in X_T$. Then, problem
\begin{equation*}
  \left\{
  \begin{array}{lcl}
    -\Delta\left[\phi(\Delta u(n-1))\right]=\left(a_{nn}-1+\displaystyle\frac{\displaystyle\sum_{j=1,j\neq n}^T a_{nj}|u(j)|^2}{1+|u(n)|^2}\right)u(n)+\displaystyle\frac{1}{1+|u(n)|+|u(T+1)-u(0)|}\;h(n)\\ \hfill\hbox{$\ \ (n \in \mathbb{Z}[1,T]),$} \\
    \left(\phi(\Delta u(0)),-\phi(\Delta u(T))\right)\in\gamma(u(0),u(T+1)),
  \end{array}
  \right.
\end{equation*}
has at least one solution. To see this, Theorem \ref{thf1} applies with
$$f(n,x^0,\ldots,x^{T+1})=\left(a_{nn}-1+\displaystyle\frac{\displaystyle\sum_{j=1,j\neq n}^T a_{nj}|x^j|^2}{1+|x^n|^2}\right)x^n+\displaystyle\frac{1}{1+|x^n|+|x^{T+1}-x^0|}\;h(n) \quad (n \in \mathbb{Z}[1,T]),$$
$c_{ij}=|a_{ij}|$ ($i,j \in\mathbb{Z}[1,T]$) and $c=\displaystyle \max_{n\in \mathbb{Z}[1,T]}|h(n)|.$}
\end{example}
\medskip

\begin{theorem}\label{deltau}
Assume that, for all $n\in\mathbb{Z}[1,T]$, the following hold:
\begin{equation}\label{Mnrho}
M_n(\rho):=\sup\left\{|f(n,x^0,\ldots,x^{T+1})|:x^0,\ldots,x^{T+1}\in\mathbb{R}^N\mbox{ and } |x^n|\leq\rho\right\}<+\infty , \quad \mbox{for all } \rho>0
\end{equation}
and
\begin{equation}\label{fapriori}
 \limsup_{|x^n|\to\infty}\frac{\langle f(n,x^0,\ldots,x^{T+1}) | x^n\rangle}{|x^n|^2}<0,\quad \mbox{uniformly with } x^0,\ldots,x^{n-1},x^{n+1},\ldots,x^{T+1}\in\mathbb{R}^N.
\end{equation}

Then problem \eqref{bvpf} has at least one solution.
\end{theorem}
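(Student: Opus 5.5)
We use the a priori estimates method, Proposition \ref{AEs}: it suffices to show that the set $\mathcal{A}$ in \eqref{Adef} is bounded. Let $u\in X_{T+2}$ with $\mathcal{S}(u)=\mu^{-1}u$ for some $\mu\in(0,1]$. Then $\mu^{-1}u$ solves $[Q_\gamma(N_f(u))]$, so $\|\Delta u\|_\infty<\mu a\le a$. By Proposition \ref{estni}, all values of $u$ are controlled by $\|u\|_T$, and it is enough to bound each $|u(n)|$, $n\in\mathbb{Z}[1,T]$, by a constant independent of $u$ and $\mu$.

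The difference from Theorem \ref{thf1} is that \eqref{fapriori} only controls $\langle f(n,\cdot)|x^n\rangle$ for large $|x^n|$, and only componentwise. So we work pointwise rather than with the summed identity \eqref{est1}.

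From \eqref{fapriori} we obtain $\rho_0>0$ and $\varepsilon>0$ such that $\langle f(n,x^0,\dots,x^{T+1})|x^n\rangle\le -\varepsilon|x^n|^2$ whenever $|x^n|\ge\rho_0$, for every $n$ and uniformly in the other variables. Now pick $n_0\in\mathbb{Z}[1,T]$ at which $|u(n)|$ is maximal over $\mathbb{Z}[1,T]$, and suppose $|u(n_0)|\ge\rho_0$. Take the inner product of the equation
\[
-\Delta[\phi(\Delta(\mu^{-1}u)(n_0-1))]+\mu^{-1}u(n_0)=f(n_0,u(0),\dots,u(T+1))+u(n_0)
\]
with $u(n_0)$. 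This gives
\[
-\langle\Delta[\phi(\Delta(\mu^{-1}u)(n_0-1))]\,|\,u(n_0)\rangle+(\mu^{-1}-1)|u(n_0)|^2\le-\varepsilon|u(n_0)|^2 .
\]
The key step, and the main obstacle, is to control the sign of the first term. If $n_0$ is an interior maximum of $|u|$ over $\mathbb{Z}[0,T+1]$, monotonicity of $\phi$ with $\phi(0)=0$ yields
\[
\langle\phi(\Delta v(n_0))\,|\,v(n_0+1)-v(n_0)\rangle\cdot\dots
\]
and the usual maximum-principle argument works. More precisely, for $v=\mu^{-1}u$ we have $\langle v(n_0+1)-v(n_0)\,|\,v(n_0)\rangle\le0$ and $\langle v(n_0)-v(n_0-1)\,|\,v(n_0)\rangle\ge0$. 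However, this does not by itself give the sign of $\langle\phi(\cdot)|v(n_0)\rangle$ for a general vector field $\phi$ with $N>1$. For this reason the pointwise argument seems unreliable, so we would first try the following alternative.

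Alternative route, which we would actually pursue, via \eqref{est1} and splitting. Use \eqref{est1} with $h=N_f(u)$ and the nonnegativity of $\mathcal M$ and $\omega$:
\[
\mu^{-1}\|u\|_T^2\le\sum_{n=1}^T\langle f(n,u(0),\dots,u(T+1))|u(n)\rangle+\|u\|_T^2,
\]
hence
\[
\sum_{n=1}^T\langle f(n,\dots)|u(n)\rangle\ge0 .
\]
Split the indices into $I=\{n:|u(n)|\ge\rho_0\}$ and its complement. For $n\notin I$, \eqref{Mnrho} gives $|\langle f(n,\dots)|u(n)\rangle|\le\rho_0M_n(\rho_0)$, uniformly in the other variables. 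For $n\in I$, the term is at most $-\varepsilon|u(n)|^2$. Therefore
\[
\varepsilon\sum_{n\in I}|u(n)|^2\le\sum_{n\notin I}\rho_0M_n(\rho_0)\le T\rho_0\max_n M_n(\rho_0),
\]
so every $|u(n)|$ is bounded by a constant $R$ independent of $u$ and $\mu$.

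Finally, Proposition \ref{estni} applied to $u$ (recall $\|\Delta u\|_\infty\le a$) bounds $|u(0)|$ and $|u(T+1)|$ by $\sqrt{T}R/\sqrt T+Ta$. Hence $\mathcal A$ is bounded in $(X_{T+2},\|\cdot\|_{T+2})$, and Proposition \ref{AEs} yields a solution. The only delicate points are two. First, extracting uniform $\rho_0$ and $\varepsilon$ from the $\limsup$ in \eqref{fapriori}, which is immediate from the uniformity stated there. Second, checking that \eqref{Mnrho} is exactly what bounds the small-$|u(n)|$ terms regardless of the unbounded other arguments of $f$.
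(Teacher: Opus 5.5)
Your alternative route is correct and is essentially the paper's argument. The paper converts \eqref{Mnrho}--\eqref{fapriori} into the global bound $\langle f(n,x^0,\ldots,x^{T+1})|x^n\rangle\le-\sigma(n)|x^n|^2+\sigma(n)\rho(n)^2+\rho(n)M_n(\rho(n))$ and invokes Theorem \ref{thf1} with the diagonal choice $c_{nn}=1-\sigma(n)$; the proof of Theorem \ref{thf1} is exactly your estimate from \eqref{est1} followed by Proposition \ref{estni}, and your index splitting performs the same absorption of the region $|u(n)|<\rho_0$ directly. The discarded maximum-principle paragraph, with its unfinished display, plays no role and should simply be deleted.
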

\noindent\proof Let $n\in\mathbb{Z}[1,T]$ be arbitrarily chosen. From \eqref{fapriori} we can find constants $\sigma(n)>0$ and $\rho(n)>0$ such that
\begin{equation*}
   \langle f(n,x^0,\ldots,x^{T+1}) | x^n\rangle<-\sigma(n)|x^n|^2,\quad \forall\ x^0,\ldots,x^{n-1},x^{n+1},\ldots,x^{T+1}\in\mathbb{R}^N \mbox{ and } x^n\in\mathbb{R}^N \mbox{ with } |x^n|>\rho(n).
\end{equation*}
On the other hand, \eqref{Mnrho} implies
$$\sup\left\{\langle f(n,x^0,\ldots,x^{T+1}) | x^n\rangle:x^0,\ldots,x^{T+1}\in\mathbb{R}^N\mbox{ and } |x^n|\leq\rho(n)\right\}\leq \rho(n) \, M_n(\rho(n))=:M^1(\rho(n))$$ and we can estimate
\begin{equation*}
   \langle f(n,x^0,\ldots,x^{T+1}) | x^n\rangle\leq-\sigma(n)|x^n|^2+\sigma(n)\rho(n)^2+M^1(\rho(n)),\quad \mbox{for all }\ x^0,\ldots,x^{T+1}\in\mathbb{R}^N.
\end{equation*}
Then, Theorem \ref{thf1} applies with $c_{nn}=1-\sigma(n)$ for $n\in \mathbb{Z}[1,T]$, $c_{ij}=0$ for $i,j\in \mathbb{Z}[1,T]$, $i\neq j$ and $c=\max_{n\in \mathbb{Z}[1,T]}\{\sigma(n)\rho(n)^2+ M^1(\rho(n))\}$. \cqfd
\medskip

The following is an extended discrete variant of Corollary 3.2 in \cite{JeSe}.
\smallskip

\begin{corollary}\label{cdeltau} Let $g:\mathbb{Z}[1,T] \times (\mathbb{R}^N)^{T+2} \to \mathbb{R}^N$ be continuous and bounded.
If the continuous function $\ell:\mathbb{Z}[1,T]\times\mathbb{R}^N\to\mathbb{R}^N$ satisfies
\begin{equation}\label{lsat}
\limsup_{|x|\to\infty}\frac{\langle \ell(n,x) | x\rangle}{|x|^2}<0 \qquad (n\in\mathbb{Z}[1,T]),
\end{equation}
then problem
\begin{equation*}
  \left\{
  \begin{array}{lcl}
    -\Delta\left[\phi(\Delta u(n-1))\right]=\ell(n,u(n)) + g(n, u(0), \dots , u(T+1))  & \hbox{$\quad (n\in\mathbb{Z}[1,T]),$} \\
    \left(\phi(\Delta u(0)),-\phi(\Delta u(T))\right)\in\gamma(u(0),u(T+1))
  \end{array}
  \right.
\end{equation*}
has at least one solution.
\end{corollary}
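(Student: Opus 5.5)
The plan is to apply Theorem \ref{deltau} with $f(n,x^0,\ldots,x^{T+1}):=\ell(n,x^n)+g(n,x^0,\ldots,x^{T+1})$. This $f$ is continuous, so ($H_f$) holds, and it remains to check the two hypotheses \eqref{Mnrho} and \eqref{fapriori} for each $n\in\mathbb{Z}[1,T]$.

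For \eqref{Mnrho}, let $G:=\sup\{|g(n,x^0,\ldots,x^{T+1})|\}<+\infty$, which is finite because $g$ is bounded. Since $\ell(n,\cdot)$ is continuous on $\mathbb{R}^N$, it is bounded on the compact ball $\overline{B}_\rho$. Hence
$$M_n(\rho)\leq \max_{|x|\leq\rho}|\ell(n,x)|+G<+\infty .$$

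For \eqref{fapriori}, split the quotient as
$$\frac{\langle f(n,x^0,\ldots,x^{T+1})|x^n\rangle}{|x^n|^2}\leq\frac{\langle \ell(n,x^n)|x^n\rangle}{|x^n|^2}+\frac{G}{|x^n|}.$$
The second term tends to $0$ as $|x^n|\to\infty$, uniformly in the remaining variables, because $G$ is a uniform bound. The first term does not depend on the other variables at all. Therefore the $\limsup$ in \eqref{fapriori} is at most the $\limsup$ in \eqref{lsat}, which is negative, and the limit is uniform in $x^0,\ldots,x^{n-1},x^{n+1},\ldots,x^{T+1}$.

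Theorem \ref{deltau} then yields a solution. There is no genuine obstacle. The only point needing care is the uniformity required in \eqref{fapriori}, and it is secured precisely because $g$ is bounded and $\ell$ depends only on $(n,x^n)$.
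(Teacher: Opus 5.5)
Your proof is correct and matches the paper's: it also applies Theorem \ref{deltau} with $f(n,x^0,\ldots,x^{T+1})=\ell(n,x^n)+g(n,x^0,\ldots,x^{T+1})$. The paper leaves the checks of \eqref{Mnrho} and \eqref{fapriori} implicit, and you supply them correctly using the boundedness of $g$ (via $\langle g\,|\,x^n\rangle\le \sup|g|\,|x^n|$) and the fact that $\ell$ depends only on $(n,x^n)$.
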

\noindent\proof Theorem \ref{deltau} applies with
$$f(n, x^0, \dots , x^{T+1})= \ell (n, x^n)+g(n, x^0, \dots , x^{T+1}) \quad (n\in \mathbb{Z}[1,T], \, x^0, \dots , x^{T+1}\in \mathbb{R}^N).$$
\cqfd


\begin{example}\label{exnNeSt}
\emph{Let $G_1,\, G_2:\mathbb{R}^N \to \mathbb{R}$ be convex. If $\nabla G_i$ exists and is continuous,
$\nabla G_i(0_{\mathbb{R}^N}) = 0_{\mathbb{R}^N}$
and  $G_i(0_{\mathbb{R}^N})=0$ ($i=1,2$), then, for any $h\in X_T$, system
$$ -\Delta\left[\phi(\Delta u(n-1))\right]+ u(n) =\displaystyle \frac{ u(n-1)}{1+|u(n-1)|} +h(n)   \hbox{$\quad (n\in\mathbb{Z}[1,T])$}$$
has at least one solution satisfying the Neumann-Steklov boundary conditions
\begin{equation}\label{nsbc}
\phi(\Delta u(0)) =\nabla G_1(u(0)), \quad \phi(\Delta u(T)) =-\nabla G_2(u(T+1)).
\end{equation}
Indeed, \eqref{nsbc} is of type \eqref{ncon} with $K=\mathbb{R}^N \times \mathbb{R}^N$ (hence, $N_K(z)=\{ 0_{\mathbb{R}^N \times \mathbb{R}^N} \}$ ($z\in \mathbb{R}^N \times \mathbb{R}^N$)) and $G(x,y)=G_1(x)+G_2(y)$ ($(x,y)\in \mathbb{R}^N \times \mathbb{R}^N$). Then, Corollary \ref{cdeltau} easily applies with $\ell (n,x)=-x$ ($n\in \mathbb{Z}[1,T]$, $x\in \mathbb{R}^N$) and $g(n, x^0, \dots , x^{T+1})= |x^{n-1}|/(1+|x^{n-1}|)+h(n)$ ($n\in \mathbb{Z}[1,T]$, $x^0, \dots , x^{T+1}\in \mathbb{R}^N$).
}
\end{example}

\medskip
\begin{corollary}\label{cdeltaunmm} Let $\mathbb{M}$ be a real $(2N \times 2N)$-matrix which is positive semi-definite (i.e. $\langle \! \langle \mathbb{M} z \, | \, z  \rangle \! \rangle \geq 0$, for all $z\in \mathbb{R}^{N} \times \mathbb{R}^N$). Then, for any $g:\mathbb{Z}[1,T] \times \mathbb{R}^N \times \mathbb{R}^N \to \mathbb{R}^N$ continuous and bounded and any continuous $\ell:\mathbb{Z}[1,T]\times\mathbb{R}^N\to\mathbb{R}^N$ satisfying \eqref{lsat}, problem
\begin{equation}\label{nmaxm}
  \left\{
  \begin{array}{lcl}
    -\Delta\left[\phi(\Delta u(n-1))\right] =\ell(n,u(n))+ g(n, u(n), \Delta u(n))  & \hbox{$\quad (n\in\mathbb{Z}[1,T]),$} \\
    \\
    \left(
\begin{array}{clrc}
  \phi(\Delta u(0)) \\[6pt]
  -\phi(\Delta u(T))
\end{array}
\right)=\mathbb{M}\
\left(
\begin{array}{clrc}
  u(0) \\[6pt]
  u(T+1)
\end{array}
\right)
  \end{array}
  \right.
\end{equation}
has at least one solution.
\end{corollary}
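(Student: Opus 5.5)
The plan is to reduce the statement to Corollary \ref{cdeltau}. This needs two things: that the linear boundary condition in \eqref{nmaxm} is of the form \eqref{bcsysgen} with some $\gamma$ satisfying ($H_{\gamma}$), and that the right-hand side splits as $\ell(n,u(n))$ plus a continuous bounded function of $(n,u(0),\dots,u(T+1))$.

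\textbf{Step 1: the boundary operator.} Define the single-valued operator $\gamma:\mathbb{R}^N\times\mathbb{R}^N\to 2^{\mathbb{R}^N\times\mathbb{R}^N}$ by $D(\gamma)=\mathbb{R}^N\times\mathbb{R}^N$ and $\gamma(z)=\{\mathbb{M}z\}$. With this choice, the boundary condition in \eqref{nmaxm} reads exactly $\left(\phi(\Delta u(0)),-\phi(\Delta u(T))\right)\in\gamma(u(0),u(T+1))$. Monotonicity holds because
$$\langle\!\langle \mathbb{M}z-\mathbb{M}w\,|\,z-w\rangle\!\rangle=\langle\!\langle \mathbb{M}(z-w)\,|\,z-w\rangle\!\rangle\geq 0,$$
using the positive semi-definiteness of $\mathbb{M}$. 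Clearly $\mathbb{M}\,0=0$, so $0_{\mathbb{R}^N\times\mathbb{R}^N}\in\gamma(0_{\mathbb{R}^N\times\mathbb{R}^N})$. For maximality, $\gamma$ is continuous, monotone and everywhere defined on a finite dimensional space, hence maximal monotone (for instance by \cite{Br}). Alternatively, $\mathbb{M}+i_d$ is injective by $\langle\!\langle(\mathbb{M}+i_d)z\,|\,z\rangle\!\rangle\geq\|z\|^2$, hence a linear bijection, which gives surjectivity of $\gamma+i_d$ and thus maximality by \cite[Proposition 2.2]{Br}. So ($H_{\gamma}$) holds.

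\textbf{Step 2: the nonlinearity.} Set
$$\tilde g(n,x^0,\dots,x^{T+1}):=g(n,x^n,x^{n+1}-x^n)\quad (n\in\mathbb{Z}[1,T]),$$
so that $\tilde g(n,u(0),\dots,u(T+1))=g(n,u(n),\Delta u(n))$; here $n+1\le T+1$, so the definition makes sense. The function $\tilde g$ is continuous as a composition of continuous maps, and it is bounded because $g$ is bounded. The function $\ell$ is continuous and satisfies \eqref{lsat} by hypothesis.

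\textbf{Step 3: conclusion.} Corollary \ref{cdeltau}, applied with $\ell$, $\tilde g$ and the $\gamma$ of Step 1, gives a solution of \eqref{nmaxm}. The only point needing any care is the maximal monotonicity in Step 1, which is the standard fact for continuous monotone maps. All other steps are direct substitutions.
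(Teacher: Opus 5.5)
Your proposal is correct and follows the paper's route: realize the boundary condition through the linear operator $\gamma_{\mathbb{M}}(z)=\mathbb{M}z$, which is maximal monotone with $0\in\gamma_{\mathbb{M}}(0)$, and then apply Corollary \ref{cdeltau}. The only differences are in detail. You verify maximality directly via the Minty-type argument ($\mathbb{M}+i_d$ is injective, hence bijective), whereas the paper cites Phelps. You also write out explicitly the rewriting of $g(n,u(n),\Delta u(n))$ as a continuous bounded function of $(u(0),\dots,u(T+1))$, which the paper leaves implicit.
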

\noindent\proof As the linear operator
\begin{equation}\label{gammaa}
\gamma_\mathbb{M} (z) = \mathbb{M}\, z \qquad (z\in \mathbb{R}^{N} \times \mathbb{R}^N).
\end{equation}
is maximal monotone \cite[Example 1.5 (b)]{[Ph]}, the conclusion is immediate from Corollary \ref{cdeltau}.
\cqfd

\begin{remark}
{\em If $\mathbb{M}$ is not symmetric, then $\gamma_\mathbb{M}$ in \eqref{gammaa} will not be cyclically monotone (see p. 240 in \cite{Rock}) and thus, by a classical result on the potentiality of maximal monotone operators \cite{Rock2}, there is no lower semicontinuous proper convex function $j:\mathbb{R}^{N} \times \mathbb{R}^N \to (-\infty , +\infty]$ such that $\gamma_\mathbb{M} = \partial j$. Therefore, if $\mathbb{M}$ is positive semi-definite, but it is not symmetric, then the boundary condition in problem \eqref{nmaxm} is of type \eqref{bcsysgen} with $\gamma$ which is not the subdifferential of a proper lower semicontinuous convex function $j$.}
\end{remark}

\begin{example}\label{exnonpot}
\emph{For any $h\in X_T$  and $\varepsilon >0$, $p\geq 2$, problem
\begin{equation*}
  \left\{
  \begin{array}{lcl}
    -\Delta\left[\phi(\Delta u(n-1))\right]+\varepsilon |u(n)|^{p-2} u(n) =\displaystyle \frac{ \Delta u(n)}{\sqrt{1+|\Delta u(n)|^2}} +h(n)  & \hbox{$\quad (n\in\mathbb{Z}[1,T]),$} \\
    \\
    \phi(\Delta u(0))=u(T+1), \, \phi(\Delta u(T))=u(0)
  \end{array}
  \right.
\end{equation*}
has at least one solution. To see this, Corollary \ref{cdeltaunmm} applies with
$$\mathbb{M}= \left[
\begin{array}{clrc}
\mathbb{O} & \mathbb{I} \\
-\mathbb{I} & \mathbb{O}
\end{array}\right] , \quad \ell(n,x)=-\varepsilon |x|^{p-2}x \quad \! \! \mbox{ and }\! \! \quad g(n,x,y)=\displaystyle \frac{y}{\sqrt{1+|y|^2}}+h(n) \quad (n\in \mathbb{Z}[1,T], \, x,y\in \mathbb{R}^N).$$
}
\end{example}
\bigskip

\section{Variational solutions}

In this section, besides hypothesis ($H_{\Phi}$), we assume ($H_{F}$), ($H_{j}$)  and we are concerned with existence of solutions for the potential system \eqref{potsysgendis} submitted to the potential boundary condition \eqref{potbcsysgen}, that is the problem
\begin{equation}\label{bvppot}
  \left\{
  \begin{array}{lcl}
    -\Delta\left[\phi(\Delta u(n-1))\right]=\nabla_u F(n, u(n)) \quad (n \in \mathbb{Z}[1,T]) \\
    \left(\phi(\Delta u(0)),-\phi(\Delta u(T))\right)\in\partial j (u(0),u(T+1)).
  \end{array}
  \right.
\end{equation}
The solutions of such a problem occurs as critical points of a certain energy function. In this view, notice first that, as $\partial j$ is a maximal monotone operator and $0_{\mathbb{R}^N \times \mathbb{R}^N}\in \partial j (0_{\mathbb{R}^N \times \mathbb{R}^N})$, problem
\begin{equation*}
  [Q_{\partial j}(h)]\qquad\qquad
  \left\{
  \begin{array}{lcl}
    -\Delta\left[\phi(\Delta u(n-1))\right]+u(n)=h(n) & \hbox{$\quad (n \in \mathbb{Z}[1,T]),$} \\
    \left(\phi(\Delta u(0)),-\phi(\Delta u(T))\right)\in \partial j (u(0),u(T+1)),
  \end{array}
  \right.
  \end{equation*}
has a unique solution $u_h\in X_{T+2}$, for any $h\in X_T$ (see Theorem \ref{thsolQ}).
We introduce:

\begin{equation*}
\mathbb{K}(a):=\{u\in X_{T+2}:\ \|\Delta u\|_\infty\leq a\},
\end{equation*}

\begin{equation*}
\Psi: X_{T+2} \to (-\infty, +\infty], \qquad
    \Psi (u)=\left\{
\begin{array}{ll} \displaystyle \sum_{n=1}^{T+1}\Phi[\Delta u(n-1)],\quad \mbox { if }  u\in \mathbb{K}(a),\\
\cr +\infty ,\quad \mbox { if } u \in X_{T+2} \setminus \mathbb{K}(a),
\end{array} \right.
\end{equation*}

\begin{equation*}
J:X_{T+2} \to (-\infty, +\infty], \qquad J(u)= \left (u(0), u(T+1) \right ),
\end{equation*}

 \begin{equation*}
\mathcal{\textbf{F}}:X_{T+2}\to \mathbb{R}, \qquad    \mathcal{\textbf{F}}(u)=-\sum_{n=1}^T F(n,u(n)).
\end{equation*}
It is a standard matter to check that: $\mathbb{K}(a)$ ($=D(\Psi)$) is closed and convex,  $\Psi$ and $J$ are proper, convex and lower semicontinuos, $\Psi+J$ is bounded from below and $\textbf{F}$ is of class $C^1$ on $X_{T+2}$,
its derivative being given by
\begin{equation*}
\langle \mathcal{\textbf{F}}'(u),v \rangle=-\sum_{n=1}^T \langle \nabla _u F(n,u(n)) \, | \, v(n)\rangle \qquad (u,v \in  X_{T+2}).
\end{equation*}
Also, it is clear that $D(J)= \left \{ u \in X_{T+2} \, : \, \left (u(0), u(T+1) \right ) \in D(j) \right \}$.
Then, { \it the energy functional associated to} \eqref{bvppot} will be
\begin{equation*}\label{Erond}
\mathcal{E}:X_{T+2} \to (-\infty, +\infty], \qquad \mathcal{E}:=\Psi +J + \mathcal{\textbf{F}},
\end{equation*}
which fits the structure required by Szulkin's critical point theory \cite{Sz}. Saying that $\mathcal{E}$ is the energy functional of \eqref{bvppot}, we have to prove that a critical point of $\mathcal{E}$ in the sense of this theory is indeed a solution of problem \eqref{bvppot}. We will do this in Theorem \ref{cpsol} below. For the convenience of the reader, we recall some basic notions of this theory in the context of our framework. Thus, {\it a critical point of } $\mathcal{E}$ means
an element $w\in D(\Psi +J)$ which satisfies
\begin{equation}\label{critpointdef}
(\Psi + J)(v)-(\Psi + J)(w) + \langle \mathcal{\textbf{F}}^{\prime} (w),v-w \rangle \geq 0, \quad \mbox{ for all } v \in D(\Psi + J).
\end{equation}
A sequence $\{ w_k \} \subset D(\Psi + J)$ will be called a \textit{(PS)-sequence} for $\mathcal{E}$ if ${\mathcal E}(w_k) \to c \in \mathbb{R}$ and
\begin{equation}\label{PSseq}
(\Psi + J)(v)-(\Psi + J)(w_k)+\langle \mathcal{\textbf{F}}^{\prime} (w_k),v-w_k \rangle \geq -\varepsilon_k\| v-w_k \|_{T+2}, \quad \mbox{ for all } v \in D(\Psi + J)
\end{equation}
\noindent where $\varepsilon_k \to 0$. The functional ${\mathcal E}$ is said \textit{to satisfy the (PS)-condition} if any (PS)-sequence has a convergent subsequence in $X_{T+2}$.

\begin{proposition}\label{pivar}
The solution $u_h$ of problem $[Q_{\partial j}(h)]$ is the unique solution in $D(\Psi+J)$ of the variational inequality
\begin{equation}\label{iv}
\Psi(v)+J(v)-\Psi(u)-J(u) +\sum_{n=1}^{T}\langle u(n)-h(n) | v(n)-u(n) \rangle \geq 0 , \quad \mbox{ for all } v \in D(\Psi + J)
\end{equation}
and the unique minimum point of the strictly convex function $E:X_{T+2}\to (-\infty,+\infty]$ defined by
$$E(v)=\Psi(v)+J(v)+\frac{1}{2}\|v\|_T^2- \sum_{n=1}^{T}\langle h(n)|v(n)\rangle \quad (v\in X_{T+2}).$$
\end{proposition}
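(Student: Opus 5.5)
The plan is to verify directly that $u_h$ satisfies \eqref{iv}, then show that \eqref{iv} is exactly the first-order optimality condition for $E$, and finally get uniqueness from the strict convexity of $E$. Throughout, $J(v)$ is read as $j(v(0),v(T+1))$, which is clearly what is intended.

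\emph{Step 1: $u:=u_h$ satisfies \eqref{iv}.} Since $u$ solves $[Q_{\partial j}(h)]$, we have $\|\Delta u\|_\infty<a$ and $(u(0),u(T+1))\in D(\partial j)\subset D(j)$, so $u\in D(\Psi+J)$. Fix $v\in D(\Psi+J)$ and set $w:=v-u$.

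First, I will use the gradient inequality for $\Phi$:
$$\Phi(z)\ge \Phi(y)+\langle \phi(y)|z-y\rangle \qquad (y\in B_a,\ z\in\overline{B}_a).$$
This holds because $\Phi$ is convex and continuous on $\overline{B}_a$ and differentiable at the interior point $y$. Take the difference quotient along the segment from $y$ to $z$ and use convexity.

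Applying it with $y=\Delta u(n-1)$ and $z=\Delta v(n-1)$ and summing gives
$$\Psi(v)-\Psi(u)\ge \sum_{n=1}^{T+1}\langle\phi(\Delta u(n-1))|\Delta w(n-1)\rangle .$$
By the definition of the subdifferential together with the boundary condition,
$$J(v)-J(u)\ge \langle\phi(\Delta u(0))|w(0)\rangle-\langle \phi(\Delta u(T))|w(T+1)\rangle .$$

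Next, I apply the summation by parts formula \eqref{sbpf}, exactly as in the proof of Proposition \ref{prop21}. It rewrites the sum $\sum_{n=1}^{T+1}\langle\phi(\Delta u(n-1))|\Delta w(n-1)\rangle$ as
$$-\sum_{n=1}^T\langle\Delta[\phi(\Delta u(n-1))]|w(n)\rangle+\langle\phi(\Delta u(T))|w(T+1)\rangle-\langle\phi(\Delta u(0))|w(0)\rangle .$$
Adding the two inequalities, the boundary terms cancel. By the equation, $-\Delta[\phi(\Delta u(n-1))]=h(n)-u(n)$, so the remaining sum becomes $\sum_{n=1}^T\langle h(n)-u(n)|w(n)\rangle$. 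This is precisely \eqref{iv}.

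\emph{Step 2: \eqref{iv} is equivalent to minimizing $E$.} Write $E=(\Psi+J)+G$ with $G(v)=\frac12\|v\|_T^2-\sum_{n=1}^T\langle h(n)|v(n)\rangle$. Then $G$ is convex and $C^1$, with $\langle G'(u),v-u\rangle=\sum_{n=1}^T\langle u(n)-h(n)|v(n)-u(n)\rangle$.

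If $u$ satisfies \eqref{iv}, convexity of $G$ gives $G(v)-G(u)\ge\langle G'(u),v-u\rangle$. Hence $E(v)-E(u)\ge 0$ for every $v$, so $u$ is a minimum point of $E$.

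Conversely, suppose $u$ minimizes $E$ (so $u\in D(\Psi+J)$). Put $v_t=u+t(v-u)$ with $t\in(0,1]$. Convexity of $\Psi+J$ gives
$$0\le E(v_t)-E(u)\le t\big[(\Psi+J)(v)-(\Psi+J)(u)\big]+G(v_t)-G(u).$$
Dividing by $t$ and letting $t\to0^+$ yields \eqref{iv}.

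\emph{Step 3: strict convexity and uniqueness.} This is the only point needing some care. Let $v_1\ne v_2$ lie in $D(E)$. If they differ somewhere on $\mathbb{Z}[1,T]$, then the term $\frac12\|\cdot\|_T^2$ is strictly convex along the segment joining them. Otherwise $v_1=v_2$ on $\mathbb{Z}[1,T]$, so $v_1(0)\ne v_2(0)$ or $v_1(T+1)\ne v_2(T+1)$. In that case $\Delta v_1(0)\ne\Delta v_2(0)$ or $\Delta v_1(T)\ne\Delta v_2(T)$, and strict convexity of $\Phi$ on $\overline{B}_a$ makes $\Psi$ strictly convex along the segment. Since the remaining terms are convex, $E$ is strictly convex on its domain.

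Consequently $E$ has at most one minimum point. By Step 1, $u_h$ satisfies \eqref{iv}, so by Step 2 it is a minimum point of $E$, hence the unique one. Finally, any solution of \eqref{iv} is a minimizer of $E$ by Step 2 and must therefore equal $u_h$, which gives uniqueness for the variational inequality as well.
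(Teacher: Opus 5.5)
Your proof is correct and follows the paper's argument essentially step by step. The gradient inequality for $\Phi$, the subdifferential inequality for $j$ and summation by parts give \eqref{iv}; the convexity of $\frac12\|\cdot\|_T^2$ turns solutions of \eqref{iv} into minimizers of $E$; and the same case split (the functions differ on $\mathbb{Z}[1,T]$ versus only at an endpoint) gives strict convexity. The converse implication in your Step 2 (minimizer $\Rightarrow$ \eqref{iv}) is correct but not needed, since uniqueness for \eqref{iv} already follows from the forward implication together with strict convexity of $E$.
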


\noindent\proof
Let $v\in \mathbb{K}(a)$ be with $(v(0),v(T+1))\in D(j)$ be arbitrarily chosen. For all $(x,y)\in\mathbb{R}^N\times \mathbb{R}^N$ we have
\begin{equation}\label{estimj}
j(x,y)-j(u_h(0),u_h(T+1))\geq \langle\phi(\Delta u_h(0))|x-u_h(0)\rangle-\langle \phi(\Delta u_h(T))|y-u_h(T+1) \rangle.
\end{equation}
\noindent Then, using the convexity of $\Phi$, estimation \eqref{estimj} and summation by parts formula \eqref{sbpf}, we get
\begin{equation*}
\begin{split}
 \Psi(v)+J(v)-\Psi(u_h)-J(u_h)=&\sum_{n=1}^{T+1}\{\Phi[\Delta v(n-1)]-\Phi[\Delta u_h(n-1)]\}+J(v)-J(u_h)\\
\geq &\sum_{n=1}^{T+1} \langle \phi(\Delta u_h(n-1))|\Delta (v- u_h)(n-1)\rangle\\
&+\langle\phi(\Delta u_h(0))| (v-u_h)(0)\rangle - \langle\phi(\Delta u_h(T))| (v-u_h)(T+1) \rangle\\
=& \sum_{n=1}^{T}\langle -\Delta(\phi(\Delta u_h(n-1)))| (v-u_h)(n)\rangle.
\end{split}
\end{equation*}
Since $u_h$ is the solution of problem $[Q_{\partial j}(h)]$, we obtain that $u_h$ verifies \eqref{iv}.

\smallskip
Next, using the elementary inequality
$$\frac{|y|^2}{2}-\frac{|x|^2}{2}\geq \langle x|y-x\rangle, \quad (x,y\in\mathbb{R}^N),$$
it is easy to check that any solution of \eqref{iv} is a minimum point of $E$.
Further, observe that to prove that $E$ is strictly convex it suffices to show that the mapping
$$\pi :\mathbb{K}(a) \to \mathbb{R}, \qquad \pi(v)= \Psi(v)+\frac{1}{2}\|v\|_T^2\quad \left ( = \sum_{n=1}^{T+1}\Phi[\Delta u(n-1)]+\frac{1}{2}\|v\|_T^2\right )$$
is strictly convex. Thus, let $u,v\in \mathbb{K}(a)$, $u\neq v$ and $\alpha \in (0,1)$. We claim that
\begin{equation}\label{strcE}
\pi( \alpha u +(1- \alpha )v) <\alpha \pi (u) +(1- \alpha ) \pi (v).
\end{equation}
Indeed, if $u|_{\mathbb{Z}[1,T]} \neq v|_{\mathbb{Z}[1,T]}$, this follows by the strict convexity of the mapping $\| \cdot \|_T^2$. In the remaining case $u|_{\mathbb{Z}[1,T]} = v|_{\mathbb{Z}[1,T]}$, we must have either
 $u(0) \neq v(0)$ or $u(T+1) \neq v(T+1)$. Let us say that $u(0) \neq v(0)$ (similar reasoning when $u(T+1) \neq v(T+1)$). Then, clearly one has that $\Delta u(0) \neq \Delta  v(0)$ and by the strict convexity of $\Phi$ we have
 $$\Phi(\alpha \Delta u(0) +(1- \alpha )\Delta v(0))< \alpha \Phi (\Delta u(0)) +(1- \alpha) \Phi (\Delta v(0)).$$
This yields
\begin{equation*}
\Psi( \alpha u +(1- \alpha) v) <\alpha \Psi (u) +(1- \alpha ) \Psi(v),
\end{equation*}
which implies \eqref{strcE}.
\cqfd
\begin{theorem}\label{cpsol}
Any critical point of $\mathcal{E}$ is a solution of problem \eqref{bvppot}.
\end{theorem}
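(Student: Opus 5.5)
The idea is to reduce the critical point inequality to the variational inequality of Proposition \ref{pivar} and then use uniqueness. Let $w\in D(\Psi+J)$ be a critical point of $\mathcal{E}$. Set
$$h(n):=\nabla_u F(n,w(n))+w(n)\qquad (n\in\mathbb{Z}[1,T]).$$
This $h$ belongs to $X_T$. With this choice, the derivative formula for $\mathbf{F}$ gives
$$\langle \mathbf{F}'(w),v-w\rangle=-\sum_{n=1}^T\langle \nabla_uF(n,w(n))\,|\,v(n)-w(n)\rangle=\sum_{n=1}^T\langle w(n)-h(n)\,|\,v(n)-w(n)\rangle .$$
Substituting this into \eqref{critpointdef} turns it exactly into the variational inequality \eqref{iv} with $u=w$. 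So $w\in D(\Psi+J)$ solves \eqref{iv}.

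By Proposition \ref{pivar}, the solution $u_h$ of $[Q_{\partial j}(h)]$ is the unique solution of \eqref{iv} in $D(\Psi+J)$. Its existence follows from Theorem \ref{thsolQ}, since $\partial j$ is maximal monotone and $0\in\partial j(0)$ by ($H_j$). Hence $w=u_h$. In particular $\|\Delta w\|_\infty<a$, $(w(0),w(T+1))\in D(\partial j)$, the boundary condition in \eqref{bvppot} holds, and
$$-\Delta[\phi(\Delta w(n-1))]+w(n)=h(n)=\nabla_uF(n,w(n))+w(n).$$
Cancelling $w(n)$ yields the equation in \eqref{bvppot}, so $w$ is a solution.

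The delicate point is the uniqueness clause of Proposition \ref{pivar}, specifically that a solution of \eqref{iv} might a priori have $\|\Delta w\|_\infty=a$, where $\phi$ is undefined. That proposition, however, already handles this: any solution of \eqref{iv} minimizes the strictly convex function $E$, and $u_h$ is also a minimizer, so the two coincide. This places $w$ automatically in the open region $\|\Delta w\|_\infty<a$. The only remaining care is in the bookkeeping of the sign conventions in $h$ and $\mathbf{F}'$.
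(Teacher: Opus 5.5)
Your proof is correct and is essentially the paper's argument: you rewrite the critical point inequality as the variational inequality \eqref{iv} with $h=w+\nabla_u F(\cdot,w)$, then use the uniqueness part of Proposition \ref{pivar} to get $w=u_h$, which solves $[Q_{\partial j}(h)]$ and hence \eqref{bvppot}. Your extra remark is accurate: uniqueness comes from the strict convexity of $E$, so $w$ automatically satisfies $\|\Delta w\|_\infty<a$; this only makes explicit what the paper leaves inside Proposition \ref{pivar}.
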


\noindent \proof
Let $w\in D(\Psi+J)$ be a critical point of $\mathcal{E}$. This means that it satisfies
$$\Psi(v)+J(v)-\Psi(w)-J(w)+\sum_{n=1}^{T}\langle w(n)-[w(n)+\nabla _uF(n,w(n))]| v(n)-w(n) \rangle\geq 0, \quad \mbox{for all } v\in D(\Psi+J)),$$
which is a variational inequality of type \eqref{iv} with $h=w+\nabla_u F(\cdot,w)$. From Proposition \ref{pivar} we have that $w=u_{w+\nabla_u F(\cdot,w)}$ is a solution of  $[Q_{\partial j}(w+\nabla_u F(\cdot ,w))]$, thus it solves problem \eqref{bvppot}.
\cqfd

\medskip

\bigskip

\section{Minimum energy and saddle-point solutions}

In what follows we keep hypotheses ($H_{\Phi}$), ($H_{F}$) and ($H_{j}$) and we deal with existence of solutions of problem \eqref{bvppot} that appear either as minimum points of the energy functional ${\mathcal E}$ or as saddle-points of ${\mathcal E}$. In this view, first we need a result expresing the constant $\lambda_1$ in terms of the domain $D(\Psi + J)=D({\mathcal E})$, which will allow us to state a "universal"  existence result  on minimum energy solutions (see Theorem \ref{l1poz} $(iii)$, below).

\begin{proposition}\label{Kmarg1} Let $\lambda_1=\lambda_1(a, \partial j)$ be the constant defined by \eqref{lambda1}, with $\gamma=\partial j$. Then it holds
\begin{equation}\label{lambda11}
   \lambda_1=\inf\left\{\frac{\displaystyle\sum_{n=1}^{T+1}|\Delta w(n-1)|^2}{\|w\|_T^2} \, : \, w\in D(\Psi +J),\; w|_{\mathbb{Z}[1,T]}\neq 0_{X_T}\right\}.
\end{equation}
\end{proposition}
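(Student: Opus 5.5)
Denote by $\lambda_1^*$ the right-hand side of \eqref{lambda11}. Then $D(\Psi+J)=\{w\in X_{T+2}:\|\Delta w\|_\infty\le a,\ (w(0),w(T+1))\in D(j)\}$, whereas $\mathcal{K}(a,\partial j)=\{w:\|\Delta w\|_\infty<a,\ (w(0),w(T+1))\in D(\partial j)\}$. The plan is to prove the two inequalities $\lambda_1^*\le\lambda_1$ and $\lambda_1\le\lambda_1^*$ separately.

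\textbf{The inequality $\lambda_1^*\le\lambda_1$.} Since $D(\partial j)\subset D(j)$ and the strict bound $\|\Delta w\|_\infty<a$ implies $\le a$, we have $\mathcal{K}(a,\partial j)\subset D(\Psi+J)$. The infimum over the larger set is smaller, which gives this inequality immediately.

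\textbf{The inequality $\lambda_1\le\lambda_1^*$: approximation.} Fix $w\in D(\Psi+J)$ with $w|_{\mathbb{Z}[1,T]}\neq 0$. I will construct $w_k\in\mathcal{K}(a,\partial j)$ with $w_k\to w$ in $X_{T+2}$. The Rayleigh quotient is continuous at $w$, because its denominator $\|w\|_T^2>0$, so this gives $\lambda_1\le$ the quotient of $w$, and taking the infimum over $w$ finishes the proof.

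The construction has two steps.
\begin{enumerate}
\item \emph{Shrinking.} Put $w^t:=tw$ for $t\in(0,1)$. Then $\|\Delta w^t\|_\infty\le ta<a$. By convexity of $D(j)$ and $0\in D(j)$, we get $(w^t(0),w^t(T+1))=t(w(0),w(T+1))\in D(j)$. Thus $w^t$ lies in the interior of the derivative constraint, and $w^t\to w$ as $t\to1$.
\item \emph{Moving the boundary pair into $D(\partial j)$.} Here I use the classical fact that $D(\partial j)$ is dense in $D(j)$ (Br\'ezis/Rockafellar). More precisely, for $z\in D(j)$ the resolvents $z_\varepsilon=(i_d+\varepsilon\partial j)^{-1}z$ belong to $D(\partial j)$ and converge to $z$ as $\varepsilon\to0$. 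Apply this to $z=t(w(0),w(T+1))$ to get $(x_\varepsilon,y_\varepsilon)\in D(\partial j)$ close to $z$. Define $\tilde w$ to equal $w^t$ on $\mathbb{Z}[1,T]$, with $\tilde w(0)=x_\varepsilon$ and $\tilde w(T+1)=y_\varepsilon$. Only $\Delta\tilde w(0)$ and $\Delta\tilde w(T)$ change, and they change by at most $|x_\varepsilon-tw(0)|$ and $|y_\varepsilon-tw(T+1)|$. For $\varepsilon$ small these are below $(1-t)a$, so $\|\Delta\tilde w\|_\infty<a$. Hence $\tilde w\in\mathcal{K}(a,\partial j)$. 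Moreover $\tilde w|_{\mathbb{Z}[1,T]}=tw|_{\mathbb{Z}[1,T]}\neq0$, and $\tilde w$ is close to $w$.
\end{enumerate}
Letting $t\to1$ and then $\varepsilon\to0$ along a diagonal sequence produces the required $w_k$.

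The main obstacle is the density step $\overline{D(\partial j)}\supset D(j)$. I plan to quote it from \cite{Br} (or \cite{Rock}, Theorem 23.4 together with the fact that the relative interior of $D(j)$ lies in $D(\partial j)$) rather than prove it. The rest is elementary.
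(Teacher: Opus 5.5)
Your proof is correct and follows essentially the same route as the paper. The paper also uses the two ingredients you use. First, it shrinks $w\mapsto\delta w$, using convexity of $D(j)$, $0\in D(j)$ and the homogeneity $R(\delta w)=R(w)$, to pass to the intermediate set $\mathcal{L}(a,j)$, where $\|\Delta w\|_\infty<a$ holds strictly. Second, it uses the density of $D(\partial j)$ in $D(j)$ to move only the boundary pair while keeping that strict bound. You simply merge these two steps into one diagonal approximating sequence, whereas the paper records them separately as $\alpha=\beta$ and $\lambda_1\le\beta$.
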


\noindent \proof Denote by $R$ the Rayleigh quotient entering in \eqref{lambda11}, that is
$$R(w):={\displaystyle \left (\sum_{n=1}^{T+1}|\Delta w(n-1)|^2 \right )} /{\|w\|_T^2} \quad (w\in X_{T+2}, \, \| w \| _{T}>0).$$
It is easy to see that the mapping $R:\left \{w\in X_{T+2} \, : \, \| w \| _{T}>0 \right \} \to \mathbb{R}$ is continuous. Next, set
\begin{equation}\label{eicL}
\mathcal{L}(a , j):= \left \{w \in X_{T+2} \, : \, \| \Delta w \|_{\infty }< a, \, (w(0),w(T+1))\in D(j) \right \}
\end{equation}
and define
\begin{equation*}
\beta:=  \inf\left\{ R(w) \, : \, w\in \mathcal{L}(a , j),\; \|w\|_{T} > 0 \right\},
\end{equation*}
\begin{equation*}
\alpha:= \inf\left\{ R(w) \, : \, w\in D(\Psi +J),\; \|w\|_{T} > 0\right\}.
\end{equation*}
Note that the following equality holds
\begin{equation}\label{beeqal}
 \beta = \alpha.
 \end{equation} Indeed, since $D(\Psi +J) \supset \mathcal{L}(a, j),$
we only have to check that
\begin{equation}\label{blal}
 \beta \leq \alpha.
 \end{equation} For this, let $\varepsilon >0$ be arbitrarily chosen and $w_{\varepsilon} \in X_{T+2}$ be with
$$ \|\Delta  w_{\varepsilon} \|_{\infty} \leq a,\quad \left ( w_{\varepsilon}(0), w_{\varepsilon}(T+1)\right ) \in D(j), \quad \| w_{\varepsilon} \| _{T}>0 \, \, \mbox{ and } \, \, R(w_{\varepsilon}) <\alpha +\varepsilon $$
(such a $w_{\varepsilon}$ is known to exist by the definition of $\alpha$). Pick $\delta \in (0,1)$ and observe that, as $0_{\mathbb{R}^N \times \mathbb{R}^N}\in D(j)$ and $D(j)$ is a convex set, the function $\delta  w_{\varepsilon}$ belongs to $\mathcal{L}(a , j)$ and $\beta \leq R(\delta  w_{\varepsilon})=R( w_{\varepsilon})<\alpha +\varepsilon$, which yields \eqref{blal} by letting $\varepsilon \to 0_+.$
\medskip

Now, on account of \eqref{beeqal} and since
$\mathcal{L}(a , j) \supset \mathcal{K}(a, \partial j),$
to complete the proof, it remains to show that
\begin{equation}\label{lambda112}
\lambda_1 \leq \beta.
\end{equation}
In this respect, we proceed in two steps. {\sl Step 1.} Let $w\in \mathcal{L}(a , j)$ be with $\|w\|_{T} > 0$. We {\sl claim} that there is a sequence $\{ w_l\} \subset \mathcal{K}(a, \partial j)$ with $\| w_l \|_T>0$ for all $l\in \mathbb{N}$, such that $w_l \to w$ in $X_{T+2}$, as $l \to \infty$. To prove this we use that $D(\partial j)$ is dense in $D(j)$ \cite[Proposition 2.11]{Br}. Thus, we can find a sequence $\{ (x^l, y^l) \} \subset D(\partial j)$ with $(x^l, y^l) \to (w(0),w(T+1))$ in $\mathbb{R}^N \times \mathbb{R}^N$, as $l \to \infty$. Let $w_l \in X_{T+2}$ be defined by
\begin{equation*}
 w_l(n)= \left\{
  \begin{array}{ll}
    x^l \quad & \mbox{ if }n=0; \\
    w(n)\quad & \mbox{ if }n\in \mathbb{Z}[1,T];\\
    y^l  \quad & \mbox{ if }n=T+1.
  \end{array}
  \right.
\end{equation*}
We have that $w_l \to w$ in $X_{T+2}$, as $l\to \infty$, and we can assume that $\|w_l\|_T >0$ for all $l\in \mathbb{N}$ - by the continuity of the seminorm $\| \cdot \|_T$ on $X_{T+2}$.
Since $\|\Delta w\|_{\infty} <a$ and $\Delta w_l(n-1) \to \Delta w(n-1)$  for all $n\in \mathbb{Z}[1,T+1]$, there is some $l_w\in \mathbb{N}$ such that
$\|\Delta w_l\|_{\infty} <a$ for all $l \geq l_w$. Then, clearly, we can assume that the whole sequence  $\{ w_l\}$ is in $\mathcal{K}(a, \partial j)$ and the claim is proved.

\medskip
{\sl Step 2.} Let $\varepsilon >0$ be arbitrarily chosen and $\{ v_k \} \subset \mathcal{L}(a , j)$ be with $\|v_k\|_T>0$ and
\begin{equation}\label{rey}
R (v_k) < \beta +\frac{1}{k} \qquad (k\in \mathbb{N}).
\end{equation}
 Let $k\in \mathbb{N}$ be such that $1/k < \varepsilon$. We apply Step 1 with $w=v_k$ and find a sequence $\{ w^k_l\}\subset \mathcal{K}(a, \partial j)$ with $\|w^k_l\|_T >0$ for all $l\in \mathbb{N}$, such that $w^k_l \to v_k$ in $X_{T+2}$, as $l \to \infty$. Thus, using \eqref{rey}, we get
$$ \lambda_1 \leq R (w^k_l) \stackrel{l \to \infty}{\rightarrow} R (v_k)< \beta + \varepsilon$$
which, letting $\varepsilon \to 0_+$, implies \eqref{lambda112} and the proof is complete.
\cqfd
\medskip

\begin{remark}\label{remcondsufj}
{\em ({\it i}) With $\mathcal{L}(a , j)$ defined in \eqref{eicL}, in the previous proof we obtained -- as a byproduct,  that besides \eqref{lambda11},  it holds
\begin{equation*}\label{lambda6}
   \lambda_1=\inf\left\{\frac{\displaystyle\sum_{n=1}^{T+1}|\Delta w(n-1)|^2}{\|w\|_T^2} \, : \, w\in \mathcal{L}(a , j),\; w|_{\mathbb{Z}[1,T]}\neq 0_{X_T}\right\}.
\end{equation*}
\medskip

 ({\it ii}) From Remark \ref{remcondsuf} we have that a sufficient condition for having that $\lambda_1>0$ is
\begin{equation}\label{conDgammaj}
\overline{cone \,D(\partial j)} \cap d_N^1 = \{ 0_{\mathbb{R}^N \times \mathbb{R}^N} \}.
\end{equation}
By the density of $D(\partial j)$ in $D(j)$ it is easy to check that $cone \,D( j) \subset \overline{cone \,D(\partial j)}$, which actually yields
\begin{equation*}\label{reyll}
\overline{cone \,D( j)} = \overline{cone \,D(\partial j)}
\end{equation*}
and thus \eqref{conDgammaj} is equivalent with
\begin{equation*}\label{conDgammajj}
\overline{cone \,D(j)} \cap d_N^1 = \{ 0_{\mathbb{R}^N \times \mathbb{R}^N} \}.
\end{equation*}}
\end{remark}

\begin{theorem}\label{l1poz} If $\lambda_1>0$ then the following hold:

\medskip
(i) for any $w\in  D(\Psi + J)$ one has
\begin{equation}\label{overl1d}
|w(m)| \leq a \left ( \sqrt{\frac{T+1}{T\lambda_1}} +T \right ) \qquad (m\in \mathbb{Z}[0, T+1]) ;
\end{equation}

\medskip
(ii) ${\mathcal E}$ is bounded from below and satisfies the (PS)-condition;

\medskip
(iii) ${\mathcal E}$ attains its infimum at some $u\in D(\Psi + J)$, which is a solution of problem \eqref{bvppot}.
\end{theorem}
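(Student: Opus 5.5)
For (i), I will use Proposition \ref{Kmarg1}: since $\lambda_1$ equals the infimum of the Rayleigh quotient over $D(\Psi+J)$, every $w\in D(\Psi+J)$ with $\|w\|_T>0$ satisfies
$$\lambda_1\|w\|_T^2\le \sum_{n=1}^{T+1}|\Delta w(n-1)|^2\le (T+1)a^2 .$$
Here the last inequality uses $\|\Delta w\|_\infty\le a$, which holds because $D(\Psi)=\mathbb{K}(a)$. The inequality $\lambda_1\|w\|_T^2\le (T+1)a^2$ is trivially true when $\|w\|_T=0$. Hence $\|w\|_T\le a\sqrt{(T+1)/\lambda_1}$. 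Proposition \ref{estni} is stated for $\|\Delta w\|_\infty\le a$, so it applies directly and gives $|w(m)|\le \|w\|_T/\sqrt T+Ta$. Combining the two bounds yields \eqref{overl1d}.

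For (ii), part (i) shows that $D(\mathcal{E})=D(\Psi+J)$ lies in a bounded ball $B$ of $X_{T+2}$, namely the set where all components are bounded by the constant in \eqref{overl1d}. On $\overline B$ the functional $\textbf{F}$ is continuous, hence bounded. The term $\Psi$ is bounded below since $\Phi$ is continuous on the compact set $\overline B_a$. For $J$, I plan to use that a proper convex lsc $j$ is minorized by an affine function; more simply, since $0\in\partial j(0)$ and $j(0)=0$, we get $j\ge 0$. Therefore $\mathcal{E}\ge \min\Psi+\min_{\overline B}\textbf{F}>-\infty$. For the (PS)-condition, any (PS)-sequence lies in $D(\Psi+J)$, which is bounded by (i). Since $X_{T+2}$ is finite dimensional, Bolzano--Weierstrass gives a convergent subsequence. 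So (PS) holds without using the inequality \eqref{PSseq} at all.

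For (iii), I will use the direct method in finite dimensions. Let $c=\inf\mathcal{E}$, which is finite by (ii) because $\mathcal{E}(0)=0$ is finite. Take a minimizing sequence; it lies in the bounded set $D(\Psi+J)$, so a subsequence converges to some $u$. Since $\Psi+J$ is lsc and $\textbf{F}$ is continuous, $\mathcal{E}$ is lsc, and therefore $\mathcal{E}(u)\le c$. This forces $u\in D(\Psi+J)$ and $\mathcal{E}(u)=c$. It remains to check that a global minimizer is a critical point in the sense of \eqref{critpointdef}. For $v\in D(\Psi+J)$ and $t\in(0,1)$, convexity gives
$$(\Psi+J)(u+t(v-u))\le (1-t)(\Psi+J)(u)+t(\Psi+J)(v).$$
Minimality gives $\mathcal{E}(u+t(v-u))\ge\mathcal{E}(u)$. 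Together these yield
$$t\big[(\Psi+J)(v)-(\Psi+J)(u)\big]+\textbf{F}(u+t(v-u))-\textbf{F}(u)\ge0 .$$
Dividing by $t$ and letting $t\to0^+$, using that $\textbf{F}$ is $C^1$, gives \eqref{critpointdef}. Theorem \ref{cpsol} then shows that $u$ solves \eqref{bvppot}. The only step needing care is the lower bound on $J$ in (ii), which follows from $0\in\partial j(0)$; the rest is routine.
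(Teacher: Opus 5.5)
Your proof is correct. Parts (i) and (ii) follow the paper's argument: the Rayleigh-quotient characterization \eqref{lambda11} from Proposition \ref{Kmarg1} is combined with \eqref{estimarew} to get the pointwise bound. Then boundedness of $D(\Psi+J)$ in the finite-dimensional space $X_{T+2}$ gives both (PS) and the lower bound, since the continuous part $\mathcal{\textbf{F}}$ is bounded there. The one difference in (ii) is that you justify the lower bound of $J$ explicitly, via $j\ge 0$ from $0_{\mathbb{R}^N\times\mathbb{R}^N}\in\partial j(0_{\mathbb{R}^N\times\mathbb{R}^N})$ and $j(0_{\mathbb{R}^N\times\mathbb{R}^N})=0$; the paper simply takes ``$\Psi+J$ bounded from below'' as standard.

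The genuine difference is in (iii). The paper cites Szulkin's Theorem 1.7: a functional of the form convex lsc plus $C^1$ that is bounded below and satisfies (PS) has its infimum as a critical value, a result proved via Ekeland's principle. It then applies Theorem \ref{cpsol}. You instead use the direct method. Since the domain is bounded and $\mathcal{E}$ is lower semicontinuous, a minimizing sequence has a subsequence converging to a minimizer. You then check by hand, via the convexity and difference-quotient argument, that a global minimizer satisfies \eqref{critpointdef}. That step is exactly Szulkin's Proposition 1.1, which the paper itself invokes later in the proof of Theorem \ref{thmminimumenergy}.

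Your route is more elementary and self-contained. It also shows that, in this finite-dimensional bounded-domain setting, the (PS) condition from (ii) is not needed to obtain (iii). The paper's route is shorter by citation and stays within the Szulkin framework used afterwards, for instance in Theorem \ref{saddlepoint}, where the domain is unbounded and (PS) is genuinely required.
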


\noindent \proof $(i)$ As  $\|\Delta w \|_{\infty} \leq a$, the estimation \eqref{estimarew} holds. Also, \eqref{lambda11} implies
\begin{equation}\label{rewxx}
\|w\|_T\leq a \sqrt{\frac{T+1}{ \lambda _1}}
\end{equation}
Then \eqref{overl1d} follows by \eqref{estimarew} and \eqref{rewxx}.

\medskip
$(ii)$  By \eqref{overl1d} the set $D(\Psi + J)$ is bounded, whence relatively compact in $X_{T+2}$. This clearly implies that ${\mathcal E}$ satisfies the (PS)-condition and that the continuous function $\mathcal{\textbf{F}}$ is bounded on $D(\Psi + J)$ -- which yields that ${\mathcal E}$ is bounded from below.

\medskip
$(iii)$  This is immediate from $(ii)$, Theorem 1.7 in \cite{Sz} and Theorem \ref{cpsol}.
\cqfd

\begin{remark}\label{solenmin}
{\em  The novelty in Theorem \ref{l1poz} $(iii)$ does not consist in the fact that problem \eqref{bvppot} has a solution, since this follows from the more general  Theorem \ref{thf}. What is new is that \eqref{bvppot} has a minimum energy solution. In this respect, it should be noticed that a system with a potential structure of  type \eqref{potsysgendis}, with $F$ satisfying $(H_F)$, always has a minimum energy solution provided that it is associated with one of the classical boundary conditions listed in Remark \ref{remunivres}. Also, related to Example \ref{exantip}, it is easy to see that if $\alpha \neq \beta$, then system \eqref{potsysgendis} has at least one minimum energy solution  satisfying the boundary condition from problem \eqref{exalphab}.
}
\end{remark}

Next, for $p,q\in\mathbb{Z}$ with $p<q$ and $v:\mathbb{Z}[p,q]\to \mathbb{R}^N$, we introduce the notations
$$\overline{v}:=\frac{1}{q-p+1}\sum_{k=p}^{q}v(k) \quad \text{and } \quad \widetilde{v}:=v-\overline{v}.$$
Using Lemma \ref{lemarmj} it is straightforward to see that, if $v\in X_{T+2}$ is with $\|\Delta v\|_{\infty}\leq a$ and $\overline{v}=0$, then $|v(m)|\leq (T+1) \, a$ for all $m\in\mathbb{Z}[0,T+1]$. It follows that any
$v\in X_{T+2}$ with $\|\Delta v\|_{\infty}\leq a$ satisfies
\begin{equation}\label{estvtilde}
| \widetilde{v}(m) | \leq (T+1) \, a \qquad (m \in \mathbb{Z}[0,T+1]).
\end{equation}

\noindent We introduce the set
$$D_\delta(\Psi+J):=\{v\in D(\Psi+J)\,  : \, |\overline{v}|\leq \delta\} \qquad (\delta >0).$$

\begin{theorem}\label{thmminimumenergy} Assume that there is some $\delta>0$ such that
\begin{equation}\label{relinf}
\inf_{D_\delta(\Psi+J)}\mathcal{E}=\inf_{D(\Psi+J)}\mathcal{E}.
\end{equation}
Then $\mathcal{E}$ is bounded from below on $X_{T+2}$ and attains its infimum at some $u\in D_\delta(\Psi+J)$, which is a solution of problem \eqref{bvppot}.
\end{theorem}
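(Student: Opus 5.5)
The plan is to show that $D_\delta(\Psi+J)$ is a compact set on which $\mathcal{E}$ is lower semicontinuous. Then we obtain a minimizer there, use \eqref{relinf} to see it is a global minimizer, and conclude via Szulkin's theory and Theorem \ref{cpsol}.

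\emph{Step 1: boundedness of $D_\delta(\Psi+J)$.} Every $v\in D(\Psi+J)$ satisfies $\|\Delta v\|_\infty\le a$, since $D(\Psi)=\mathbb{K}(a)$. Hence \eqref{estvtilde} gives $|\widetilde v(m)|\le (T+1)a$ for all $m\in\mathbb{Z}[0,T+1]$. If moreover $|\overline v|\le\delta$, then
\[
|v(m)|\le \delta+(T+1)a \qquad (m\in\mathbb{Z}[0,T+1]),
\]
so $D_\delta(\Psi+J)$ is bounded in the finite-dimensional space $X_{T+2}$.

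\emph{Step 2: existence of a minimizer on $D_\delta(\Psi+J)$.} The set $D_\delta(\Psi+J)$ is nonempty, since it contains $0_{X_{T+2}}$: indeed $\Psi(0)=0$ and $j(0)=0$. The function $\textbf{F}$ is continuous, so it is bounded on the bounded set $D_\delta(\Psi+J)$. Together with $\Psi+J$ being bounded from below, this shows $\mathcal{E}$ is bounded from below on $D_\delta(\Psi+J)$. By \eqref{relinf}, $\mathcal{E}$ is then bounded from below on $D(\Psi+J)$, and hence on all of $X_{T+2}$, since $\mathcal{E}=+\infty$ off $D(\Psi+J)$. Call the common infimum $m\in\mathbb{R}$.

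Take a minimizing sequence $\{v_k\}\subset D_\delta(\Psi+J)$ with $\mathcal{E}(v_k)\to m$. By Step 1 it has a subsequence converging to some $u\in X_{T+2}$. The map $v\mapsto\overline v$ is continuous, so $|\overline u|\le\delta$. The functional $\mathcal{E}=\Psi+J+\textbf{F}$ is lower semicontinuous, being a sum of proper convex lsc functions and a continuous function. Therefore
\[
\mathcal{E}(u)\le\liminf_{k\to\infty}\mathcal{E}(v_k)=m<+\infty .
\]
This gives $u\in D(\Psi+J)$, hence $u\in D_\delta(\Psi+J)$, and $\mathcal{E}(u)=m=\inf_{X_{T+2}}\mathcal{E}$.

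\emph{Step 3: the minimizer is a solution.} A global minimizer of a Szulkin-type functional is a critical point in the sense of \eqref{critpointdef}. To see this directly, fix $v\in D(\Psi+J)$ and $t\in(0,1]$. Convexity of $\Psi+J$ gives
\[
0\le \mathcal{E}(u+t(v-u))-\mathcal{E}(u)\le t\bigl[(\Psi+J)(v)-(\Psi+J)(u)\bigr]+\textbf{F}(u+t(v-u))-\textbf{F}(u).
\]
Dividing by $t$ and letting $t\to0_+$ yields \eqref{critpointdef}, because $\textbf{F}$ is $C^1$. Theorem \ref{cpsol} then shows that $u$ solves \eqref{bvppot}.

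The only delicate point is Step 1, namely the compactness of $D_\delta(\Psi+J)$. Once \eqref{estvtilde} is in hand this is immediate, so no real obstacle is expected.
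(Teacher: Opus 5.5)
Your proof is correct and follows essentially the same route as the paper: boundedness of $D_\delta(\Psi+J)$ via \eqref{estvtilde}, then lower semicontinuity of $\mathcal{E}$ to get a minimizer that is global by \eqref{relinf}, then the minimizer-is-critical-point step (the paper cites \cite[Proposition 1.1]{Sz}; you verify it directly), and finally Theorem \ref{cpsol}. Your placement of the limit point in $D(\Psi+J)$ through $\mathcal{E}(u)\le m<+\infty$ is in fact slightly more careful than the paper's assertion that $D_\delta(\Psi+J)$ is closed, which need not hold when $D(j)$ is not closed.
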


\noindent \proof From \eqref{estvtilde} we deduce that any $v \in D_\delta(\Psi+J)$ satisfies
$$\|v\|_{T+2}=(\delta+(T+1)\, a)\sqrt{T+2},$$
which show that $D_\delta(\Psi+J)$ is bounded in $X_{T+2}$. On the other hand, it is easy to verify that $D_\delta(\Psi+J)$ is a closed set, so $D_\delta(\Psi+J)$ is compact. Using that $\mathcal{E}$ is lower semicontinuous together with
\eqref{relinf}, we obtain that there is some $u\in D_\delta(\Psi+J)$ such that
$$\mathcal{E}(u)=\min_{D_\delta(\Psi+J)}\mathcal{E}=\inf_{X_{T+2}}\mathcal{E}.$$
Then, according to \cite[Proposition 1.1]{Sz}, $u$ is a critical point of $\mathcal{E}$, whence a solution of \eqref{bvppot} -- by virtue of Theorem \ref{cpsol}.
\cqfd

Instead of \eqref{bvppot}, let us consider a slightly modified version of it, namely the problem
\begin{equation}\label{forcterm}
\left\{
  \begin{array}{lcl}
    -\Delta\left[\phi(\Delta u(n-1))\right]=\nabla_u F(n, u(n)) +h(n) \quad (n \in \mathbb{Z}[1,T]) \\
    \left(\phi(\Delta u(0)),-\phi(\Delta u(T))\right)\in\partial j (u(0),u(T+1)),
  \end{array}
  \right.
  \end{equation}
where the forcing term $h$ belongs to $X_{T}$. Then, setting
$$F_h(n,u):=F(n,u)+\langle h(n)| u\rangle \quad ((n,u)\in\mathbb{Z}[1,T]\times \mathbb{R}^N),$$
it is straightforward to see that $F_h$ satisfies $(H_F)$,  that is $(H_F)$ with $F_h$ instead of $F$.
In this case, the energy functional $\mathcal{E}_{h}:X_{T+2}\to (-\infty,+\infty]$ attached to \eqref{forcterm} will given by
$$\mathcal{E}_h(u)=\Psi(u)+J(u)-\sum_{n=1}^{T}F(n,u(n))-\sum_{n=1}^{T}\langle h(n)|u(n) \rangle \quad (u\in X_{T+2})$$
and all the above in this section are valid with $\mathcal{E}_h$ instead of $\mathcal{E}$, when referring to \eqref{forcterm}.

\begin{theorem}\label{thperiodic}
Assume that:
\begin{enumerate}
\item[(i)] if $z\in D(j)$ then $j(z+\zeta)=j(z)$ for all $\zeta\in d_{N}^{1}$;
\item[(ii)] $F(n,u)$ is $\omega_i$-periodic ($\omega_i>0$) with respect to each $u_i$ ($i=\overline{1,N}$) for all $n\in\mathbb{Z}[1,T]$.
\end{enumerate}
Then, for any $h\in X_T$ with $\overline{h}=0_{\mathbb{R}^N}$, the functional $\mathcal{E}_h$ is bounded from below and attains its infimum at some $u\in D(\Psi + J)$, which is a solution of problem \eqref{forcterm}.
\end{theorem}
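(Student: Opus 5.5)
The plan is to apply Theorem \ref{thmminimumenergy} to $\mathcal{E}_h$, which the paper notes remains valid for the forced problem \eqref{forcterm}. So we need some $\delta>0$ with $\inf_{D_\delta(\Psi+J)}\mathcal{E}_h=\inf_{D(\Psi+J)}\mathcal{E}_h$. The idea is to show that $\mathcal{E}_h$ is invariant under translations by suitable constant vectors. Then every point of $D(\Psi+J)$ can be moved into a fixed bounded slab of mean values without changing the energy.

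First, invariance of $D(\Psi+J)$ and of $\Psi+J$ under constant shifts. Let $\tau\in\mathbb{R}^N$ and set $(v+\tau)(n):=v(n)+\tau$ for $n\in\mathbb{Z}[0,T+1]$. Then $\Delta(v+\tau)=\Delta v$, so $\Psi(v+\tau)=\Psi(v)$. Also $(v(0)+\tau,v(T+1)+\tau)=(v(0),v(T+1))+(\tau,\tau)$ with $(\tau,\tau)\in d_N^1$. By hypothesis (i), if $(v(0),v(T+1))\in D(j)$ then $j$ is unchanged, so $J(v+\tau)=J(v)$. Applying (i) with $-\tau$ shows the shifted point stays in $D(j)$, hence $D(\Psi+J)$ is shift invariant.

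Second, the forcing term. Since $\overline{h}=0_{\mathbb{R}^N}$ (average over $\mathbb{Z}[1,T]$), we have $\sum_{n=1}^T\langle h(n)|\tau\rangle=0$, so the linear term is shift invariant. For $F$, restrict to shifts $\tau\in\Lambda:=\{(k_1\omega_1,\dots,k_N\omega_N): k_i\in\mathbb{Z}\}$. Hypothesis (ii) then gives $F(n,v(n)+\tau)=F(n,v(n))$. Hence $\mathcal{E}_h(v+\tau)=\mathcal{E}_h(v)$ for all $v\in D(\Psi+J)$ and $\tau\in\Lambda$.

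Third, choosing $\delta$. Here one must be careful about which mean $\overline{v}$ is used in $D_\delta(\Psi+J)$. For $v\in X_{T+2}$ it is the average over $\mathbb{Z}[0,T+1]$, and it shifts as $\overline{v+\tau}=\overline{v}+\tau$. Choose $\tau\in\Lambda$ with each component of $\overline{v}+\tau$ in $[0,\omega_i)$. Then $|\overline{v+\tau}|\leq\delta:=(\sum_i\omega_i^2)^{1/2}$, and $v+\tau\in D_\delta(\Psi+J)$ has the same energy. This proves \eqref{relinf} for $\mathcal{E}_h$.

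Theorem \ref{thmminimumenergy} then gives boundedness from below and a minimizer, which is a solution of \eqref{forcterm} by Theorem \ref{cpsol}. The only delicate points are that hypothesis (i) is stated only for $z\in D(j)$, which is handled by the domain invariance above, and matching the lattice of periods with the mean. Neither should cause a genuine obstacle.
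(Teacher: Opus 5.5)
Your proposal is correct and follows essentially the same route as the paper: translate by the lattice vector $\sum_i k_i\omega_i e^i$ that brings each component of the mean into $[0,\omega_i)$, use (i), (ii) and $\overline{h}=0_{\mathbb{R}^N}$ to see that $\mathcal{E}_h$ is unchanged, and conclude via Theorem \ref{thmminimumenergy}. Your choice $\delta=(\sum_i\omega_i^2)^{1/2}$ instead of the paper's $\omega N$, and your explicit check that $D(\Psi+J)$ is invariant under constant shifts (which the paper leaves implicit), are only minor refinements.
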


\noindent \proof
Set $\omega:=\max\{\omega_i:i=\overline{1,N}\}$. We show that
\begin{equation}\label{egmulte}
\{\mathcal{E}_h(u)\, : \, u\in D(\Psi+J)\}=\{\mathcal{E}_h(v)\, : \, v\in D_{\omega N}(\Psi+J)\}
\end{equation}
and the conclusion will follow by Theorem \ref{thmminimumenergy}. At its turn, \eqref{egmulte} reduces to prove that for each $u\in D(\Psi+J)$ there is some $v\in D_{\omega N}(\Psi+J)$ such that $\mathcal{E}_h(u)=\mathcal{E}_h(v)$. And indeed, let $u\in D(\Psi+J)$ and denote by  $e^1,\dots,e^N$ canonical basis in $\mathbb{R}^N$. We fix $k_i=k_i(\overline{u})\in\mathbb{Z}$ so that $\langle \overline{u} \, | \, e^i \rangle-k_i\omega_i\in [0,\omega_i)$ ($i=\overline{1,N}$); such a family of $k_i$ exists and is unique. Then, using assumptions $(i)$, $(ii)$ and that $\overline{h}=0$, we obtain
\begin{equation}\label{demcor}
\mathcal{E}_h(u)=\mathcal{E}_h\left(u-\sum_{i=1}^{N}k_i\omega_ie^i\right).
\end{equation}
Denoting $\overline{u}^0:=\displaystyle \sum_{i=1}^{N}\left (\langle \overline{u}\, | \, e^i \rangle-k_i\omega_i \right )e^i$, we define $v:=\overline{u}^0+\widetilde{u}$.
As $\overline{v}=\overline{u}^0$ (whence $|\overline{v}| \leq \omega N$) and $\widetilde{v}=\widetilde{u}$, we infer that $v \in D_{\omega N}(\Psi+J)$. Then, from \eqref{demcor} it follows
$$\mathcal{E}_h(u)=\mathcal{E}_h\left(\widetilde{u}+\sum_{i=1}^{N}\langle \overline{u} \, | \, e^i \rangle \, e^i-\sum_{i=1}^{N}k_i\omega_ie^i\right)=\mathcal{E}_h(\widetilde{v}+\overline{v})=\mathcal{E}_h(v),$$
which completes the proof.
\cqfd

Below, for $F$ satisfying $(H_{F})$, we denote
$$\overline{F}(v):=\frac{1}{T}\sum_{n=1}^{T}F(n,v) \qquad (v\in \mathbb{R}^N).$$

\begin{theorem}\label{mininfi}
If there are constants $\alpha,c\ge 0$ such that
\begin{equation}\label{rel1cor}
    |\nabla_vF(n,v)|\leq c(|v|^\alpha+1) \qquad (n\in\mathbb{Z}[1,T],\, v\in\mathbb{R}^N)
\end{equation}
and
\begin{equation}\label{rel2cor}
    \lim_{|v|\to\infty}|v|^{-\alpha}[\overline{F}(v)+\langle \overline{h}|v\rangle]=-\infty,
\end{equation}
then the functional $\mathcal{E}_h$ is bounded from below and attains its infimum at some $u\in D(\Psi + J)$, which is a solution of problem \eqref{forcterm}.
\end{theorem}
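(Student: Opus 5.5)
We reduce the statement to Theorem \ref{thmminimumenergy} applied to $\mathcal{E}_h$, i.e. with $F_h$ in place of $F$. We will show that there is $\delta>0$ such that $\mathcal{E}_h(u)>\inf_{D_\delta(\Psi+J)}\mathcal{E}_h$ whenever $u\in D(\Psi+J)$ and $|\overline{u}|>\delta$. This yields \eqref{relinf} for $\mathcal{E}_h$, and the conclusion follows.

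\textbf{Step 1: lower bound for $\mathcal{E}_h$.} Write any $u\in D(\Psi+J)$ as $u=\overline{u}+\widetilde{u}$, where the mean is taken over $\mathbb{Z}[0,T+1]$. By \eqref{estvtilde}, $|\widetilde{u}(m)|\le (T+1)a$. We have $\Psi\ge 0$, since $\Phi$ is convex with minimum at $0$ because $\phi(0)=0$. Also, $j$ is bounded from below by an affine function, and in fact $j\ge 0$ because $0\in\partial j(0)$ and $j(0)=0$. Hence
$$\mathcal{E}_h(u)\ge -\sum_{n=1}^T\big[F(n,u(n))+\langle h(n)|u(n)\rangle\big].$$
By the mean value theorem and \eqref{rel1cor},
$$|F(n,\overline{u}+\widetilde{u}(n))-F(n,\overline{u})|\le c\big((|\overline{u}|+(T+1)a)^\alpha+1\big)(T+1)a\le C_1(|\overline{u}|^\alpha+1),$$
with $C_1$ independent of $u$. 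In addition,
$$\sum_{n=1}^T\langle h(n)|u(n)\rangle=T\langle\overline{h}|\overline{u}\rangle+\sum_{n=1}^T\langle h(n)|\widetilde{u}(n)\rangle,$$
and the last sum is bounded by $T|h|_\infty(T+1)a$. Combining these gives
$$\mathcal{E}_h(u)\ge -T\big[\overline{F}(\overline{u})+\langle\overline{h}|\overline{u}\rangle\big]-C_1T(|\overline{u}|^\alpha+1)-C_2 .$$
By \eqref{rel2cor}, the right-hand side tends to $+\infty$ as $|\overline{u}|\to\infty$.

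\textbf{Step 2: comparison with a reference value.} Since $0\in D(\Psi+J)$ and $\mathcal{E}_h(0)=0$, we have $\inf_{D_\delta(\Psi+J)}\mathcal{E}_h\le 0$ for every $\delta>0$. By Step 1 we can choose $\delta>0$ such that $\mathcal{E}_h(u)>0$ whenever $u\in D(\Psi+J)$ and $|\overline{u}|>\delta$. Then
$$\inf_{D(\Psi+J)}\mathcal{E}_h=\inf_{D_\delta(\Psi+J)}\mathcal{E}_h ,$$
and Theorem \ref{thmminimumenergy}, valid for $\mathcal{E}_h$ as noted before Theorem \ref{thperiodic}, gives a minimizer $u\in D_\delta(\Psi+J)$. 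This minimizer solves \eqref{forcterm}.

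\textbf{Main obstacle.} The delicate point is the growth bookkeeping in Step 1. The error from replacing $u(n)$ by $\overline{u}$ is of order $|\overline{u}|^\alpha$, which is exactly the normalization in \eqref{rel2cor}, so this term is absorbed only because the limit in \eqref{rel2cor} is $-\infty$. One must also keep the averages consistent. The mean $\overline{u}$ is over $T+2$ points while $\overline{F}$ and $\overline{h}$ are over $T$ points, but the bound on $\widetilde{u}$ holds pointwise, so this mismatch is harmless.
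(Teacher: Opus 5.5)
Your proposal is correct and follows essentially the same route as the paper. Both split $u=\overline{u}+\widetilde{u}$, control the oscillation term via \eqref{rel1cor} and \eqref{estvtilde}, use \eqref{rel2cor} together with $\mathcal{E}_h(0_{X_{T+2}})=0$ to obtain \eqref{relinf}, and conclude by Theorem \ref{thmminimumenergy}. The differences are cosmetic: you use the mean value theorem where the paper uses the integral identity, and the explicit bound $\Psi+J\ge 0$ where the paper uses a generic lower bound $c_2$.
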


\noindent \proof
For $u\in D(\Psi+J)$ we have
\begin{eqnarray}\label{sdpt00}
    \sum_{n=1}^T F(n,u(n))&=&\sum_{n=1}^T [F(n,u(n))-F(n,\overline{u})+F(n,\overline{u})]\nonumber\\
    &=&\sum_{n=1}^T \int_{0}^{1} \frac{d}{ds}F(n,\overline{u}+s\widetilde{u}(n))ds+\sum_{n=1}^{T}F(n,\overline{u})\nonumber\\
    &=&\sum_{n=1}^{T}\int_{0}^{1}\langle \nabla_u F(n,\overline{u}+s\widetilde{u}(n))|\widetilde{u}(n)\rangle ds+\sum_{n=1}^{T}F(n,\overline{u}).
\end{eqnarray}
Using \eqref{rel1cor}, \eqref{estvtilde} and inequality
\begin{equation}\label{ineg2alpha}
    |x+y|^\alpha\leq 2^\alpha(|x|^\alpha+|y|^\alpha)\quad (x,y\in\mathbb{R}^N, \mbox{ with convention }0^0=0),
\end{equation}
we get the estimation
\begin{equation*}
\begin{split}
\sum_{n=1}^{T}F(n,u(n))&\leq c[2^\alpha(|\overline{u}|^\alpha+(T+1)^\alpha a^\alpha)+1]T(T+1)a+\sum_{n=1}^{T}F(n,\overline{u})\\
&\leq c[2^\alpha(|\overline{u}|^\alpha+(T+1)^\alpha a^\alpha)+1]T(T+1)\, a+T\overline{F}(\overline{u}).
\end{split}
\end{equation*}

\noindent This, together with
\begin{eqnarray}\label{sdpt01}
    \sum_{n=1}^{T}\langle h(n)|u(n)\rangle&=&\sum_{n=1}^{T}\langle h(n)|\overline{u}\rangle +\sum_{n=1}^{T}\langle h(n)|\widetilde{u}(n)\rangle\\
        &\leq& T\langle \overline{h}|\overline{u} \rangle+T(T+1)\, a|h|_\infty,\nonumber
\end{eqnarray}
yields
\begin{equation*}
\sum_{n=1}^{T}\left[F(n,u(n))+\langle h(n)|u(n)\rangle\right]\leq T|\overline{u}|^\alpha\left\{|\overline{u}|^{-\alpha}[\overline{F}(\overline{u})+\langle \overline{h}|\overline{u}\rangle]+c(T+1)\, a2^\alpha\right\}+c_1,
\end{equation*}
where
\begin{equation}\label{c1def}
    c_1:=\left[c\, (2^\alpha (T+1)^\alpha \, a^\alpha+1)+|h|_\infty \right]T(T+1)\, a.
\end{equation}
Thus, as $\Psi+J$ is bounded from below -- say by constant $c_2\in \mathbb{R}$, it holds
\begin{equation}\label{sdpt02}
\begin{split}
\mathcal{E}_h(u)&=(\Psi+J)(u)-\sum_{n=1}^{T}[F(n,u(n))+\langle h(n)|u(n)\rangle]\\
&\geq c_3-T|\overline{u}|^\alpha\left\{|\overline{u}|^{-\alpha}[\overline{F}(\overline{u})+\langle \overline{h}|\overline{u}\rangle]+c(T+1)a2^\alpha\right\} \qquad (u\in D(\Psi+J)),
\end{split}
\end{equation}
with $c_3:=c_2-c_1$. Then, on account of \eqref{rel2cor}, there exists some $\delta>0$ such that $\mathcal{E}_h(u)>0$ for all $u\in D(\Psi+J)$ with $|\overline{u}|>\delta$. Therefore, as $\mathcal{E}_h(0_{X_{T+2}})=0$, it follows that \eqref{relinf} is verified and the proof is concluded by  Theorem \ref{thmminimumenergy}.
\cqfd

\begin{theorem}\label{saddlepoint}
Assume that $j|_{d_N^1}\equiv0$ and $j|_{D(j)}$ is bounded. If there are constants $\alpha, c\ge 0$ such that \eqref{rel1cor} holds true and
\begin{equation}\label{sdpt1}
    \lim_{|v|\to\infty}|v|^{-\alpha}[\overline{F}(v)+\langle \overline{h}|v\rangle]=+\infty,
\end{equation}
then problem \eqref{forcterm} has at least one solution.
\end{theorem}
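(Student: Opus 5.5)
We obtain a solution as a saddle-point type critical point of $\mathcal{E}_h$ via Szulkin's version of the saddle point theorem (Theorem 3.5 in \cite{Sz}), and then conclude by Theorem \ref{cpsol}, applied with $F_h$ instead of $F$. We use the splitting $X_{T+2}=\widetilde{X}\oplus\overline{X}$, where $\overline{X}\cong\mathbb{R}^N$ consists of the constant functions and $\widetilde{X}:=\{v\in X_{T+2}:\overline{v}=0\}$ (mean over $\mathbb{Z}[0,T+1]$). The hypothesis $j|_{d_N^1}\equiv0$ gives $J(c)=0$ for every constant $c$. It also gives $\Psi(c)=0$, hence $\overline{X}\subset D(\Psi+J)$ and $\mathcal{E}_h(c)=-T[\overline{F}(c)+\langle\overline{h}|c\rangle]$. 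By \eqref{sdpt1} this tends to $-\infty$ as $|c|\to\infty$.

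\emph{Step 1 (bound on $\widetilde{X}$).} For $u\in D(\Psi+J)$ we have $\Psi(u)\geq0$ and $J(u)\geq\inf_{D(j)}j>-\infty$, since $j|_{D(j)}$ is bounded. On the other hand, \eqref{estvtilde} shows that every $u\in D(\Psi+J)\cap\widetilde{X}$ satisfies $|u(m)|\le(T+1)a$. Therefore $\sum_{n}[F(n,u(n))+\langle h(n)|u(n)\rangle]$ is bounded above on this set, and so $\inf_{\widetilde{X}}\mathcal{E}_h=:b>-\infty$ (outside $D(\Psi+J)$ the functional equals $+\infty$).

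\emph{Step 2 (PS condition).} We show that every (PS)-sequence $\{u_k\}$ has bounded means $\overline{u_k}$; together with \eqref{estvtilde} this gives boundedness of $\{u_k\}$ and hence a convergent subsequence, $X_{T+2}$ being finite dimensional. To bound the means, we test \eqref{PSseq} with $v=u_k\pm e$, where $e$ is a constant. These test functions are admissible because $\Delta v=\Delta u_k$, so $\Psi(v)=\Psi(u_k)$, and $(v(0),v(T+1))=(u_k(0),u_k(T+1))+(e,e)$, so $J(v)=J(u_k)$ by the same invariance used in Theorem \ref{thperiodic}. We expect to need that invariance here: from $j|_{d_N^1}\equiv0$ and the boundedness of $j$ on $D(j)$, convexity should give $j(z+\zeta)\le j(z)$ for $\zeta\in d_N^1$ (via convex combinations $\lambda(z+\zeta/\lambda)+(1-\lambda)\cdot$ with points of $d_N^1$) and symmetrically the reverse inequality. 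Plugging in, we obtain $|\sum_{n}[\nabla_uF(n,u_k(n))+h(n)]|\le\varepsilon_k\sqrt{T+2}$. Writing $u_k=\overline{u_k}+\widetilde{u_k}$ and using \eqref{rel1cor} with the bounded $\widetilde{u_k}$, it follows that $|\nabla\overline{F}(\overline{u_k})+\overline{h}|$ is small up to an $O(|\overline{u_k}|^{\alpha})$ error. At the same time, $\mathcal{E}_h(u_k)\to c$ together with the expansion \eqref{sdpt00} forces $|\overline{u_k}|^{-\alpha}[\overline{F}(\overline{u_k})+\langle\overline{h}|\overline{u_k}\rangle]$ to stay bounded, which contradicts \eqref{sdpt1} unless $\{\overline{u_k}\}$ is bounded. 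Here only the energy bound is really needed. The estimate \eqref{sdpt02}, run in the reverse direction, gives $\mathcal{E}_h(u)\le C-T|\overline{u}|^{\alpha}\{|\overline{u}|^{-\alpha}[\overline{F}(\overline{u})+\langle\overline{h}|\overline{u}\rangle]-c(T+1)a2^{\alpha}\}$, since $\Psi+J$ is bounded above on $D(\Psi+J)$ ($\Phi$ is bounded on $\overline{B}_a$ and $j$ is bounded on $D(j)$). Hence $\mathcal{E}_h(u_k)\to-\infty$ whenever $|\overline{u_k}|\to\infty$, so the means are bounded.

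\emph{Step 3 (geometry).} By \eqref{sdpt1}, we can choose $R>0$ with $\sup\{\mathcal{E}_h(c):c\in\overline{X},\,|c|=R\}<b$, which is the linking geometry between the sphere in $\overline{X}$ and $\widetilde{X}$. Szulkin's saddle point theorem then yields a critical point of $\mathcal{E}_h$, which solves \eqref{forcterm} by Theorem \ref{cpsol}. The main obstacle is checking the exact hypotheses of Szulkin's saddle point theorem for the $+\infty$-valued functional $\Psi+J$; in particular one needs $D(\Psi+J)$ to be invariant under the projections onto $\overline{X}$, and that is exactly what the invariance of $j$ along $d_N^1$ provides.
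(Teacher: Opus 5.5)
Your proposal is correct and follows essentially the paper's route: the splitting $X_{T+2}=\mathbb{R}^N\oplus\widetilde{X}_{T+2}$, the lower bound for $\mathcal{E}_h$ on $\widetilde{X}_{T+2}$, the (PS)-condition obtained by bounding $\{\overline{u}_k\}$ using only the energy bound and the boundedness of $\Psi+J$ on $D(\Psi+J)$, then Szulkin's Saddle Point Theorem and Theorem \ref{cpsol}. The detour testing \eqref{PSseq} with $u_k\pm e$ is unnecessary, since its $O(|\overline{u}_k|^{\alpha})$ conclusion carries no information (\eqref{rel1cor} already bounds $\nabla\overline{F}$ that way), and the paper applies Szulkin's theorem after checking only \eqref{saddlecond} and (PS), using no projection-invariance of $D(\Psi+J)$ beyond the fact that constants lie in it.
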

\noindent \proof We show that $\mathcal{E}_h$ has the geometry required by Saddle Point Theorem \cite[Theorem 3.5]{Sz}. In this view, let us split $X_{T+2}=\mathbb{R}^N\oplus\widetilde{X}_{T+2}$, where $\widetilde{X}_{T+2}=\{u\in {X}_{T+2} \, : \, \overline{u}=0_{\mathbb{R}^N} \}$.
Since for $u\equiv x\in \mathbb{R}^N$, one has $\Psi(x)=0$, $J(x)=0$ as $j|_{d_N^1}\equiv0$, from \eqref{sdpt1} it follows
\begin{equation}\label{sdpt2}
\mathcal{E}_h(x)=-\sum_{n=1}^{T}[F(n,x)+\langle h(n)|x \rangle]=-T|x|^\alpha\left\{|x|^{-\alpha}[\overline{F}(x)+\langle \overline{h}|x\rangle]\right\}
\to -\infty , \quad \mbox{ as } |x| \to \infty.
\end{equation}
On the other hand, by virtue of \eqref{sdpt02}, we infer
\begin{equation}\label{sdpt3}
\mathcal{E}_h(u)\geq c_3 \qquad ( u \in D(\Psi+J)\cap\widetilde{X}_{T+2}).
\end{equation}
Then, from \eqref{sdpt2} and \eqref{sdpt3} there are constants $r>0$ and $c_4<c_3$ so that
\begin{equation}\label{saddlecond}
\mathcal{E}_h|_{\{ x\in \mathbb{R}^N \, :  \, |x|=r \}}\leq c_4 \quad \mbox{and} \quad \mathcal{E}_h|_{\widetilde{X}_{T+2}}\geq c_3.
\end{equation}

It remains to verify that $\mathcal{E}_h$ satisfies the (PS)-condition. With this aim, let $\{u_k\}\subset D(\Psi + J)$ be a (PS)-sequence. First,
proceeding as in the proof of Theorem \ref{mininfi}, one has (see \eqref{sdpt00} and \eqref{sdpt01})
\begin{eqnarray*}
    \sum_{n=1}^T [F(n,u_k(n))+\langle h(n)|u_k(n)\rangle]-\sum_{n=1}^{T}[F(n,\overline{u}_k)+\langle h(n)|\overline{u}_k\rangle]=\\
    \sum_{n=1}^{T}\int_{0}^{1}\langle \nabla_u F(n,\overline{u}_k+s\widetilde{u}_k(n))|\widetilde{u}_k(n)\rangle ds+\sum_{n=1}^{T}\langle h(n)|\widetilde{u}_k(n)\rangle
\end{eqnarray*}
and using \eqref{rel1cor}, \eqref{estvtilde} and \eqref{ineg2alpha} we obtain
\begin{equation*}
    \left|\sum_{n=1}^T \left[F(n,u_k(n))+\langle h(n)|u_k(n)\rangle - F(n,\overline{u}_k)-\langle h(n)|\overline{u}_k\rangle\right]\right|\leq
    c_5|\overline{u}_k|^{\alpha}+c_1,
\end{equation*}
where $c_5:=c\, 2^\alpha\, T(T+1)\, a$ and $c_1$ given in \eqref{c1def}. Next, since $\{\mathcal{E}_h(u_k) \}$ and $\{(\Psi+J)(u_k)\}$ are bounded, from the writing
$$\mathcal{E}_h(u_k)=(\Psi+J)(u_k)-T[\overline{F}(\overline{u}_k)+\langle\overline{h}|\overline{u}_k\rangle]-\sum_{n=1}^T \left[F(n,u_k(n))+\langle h(n)|u_k(n)\rangle - F(n,\overline{u}_k)-\langle h(n)|\overline{u}_k\rangle\right]$$
it follows that there is a constant $c_6 \in \mathbb{R}$ such that
$$T[\overline{F}(\overline{u}_k)+\langle\overline{h}|\overline{u}_k\rangle]\leq c_6+c_5|\overline{u}_k|^{\alpha}+c_1,$$
i.e.,
$$T|\overline{u}_k|^\alpha\left\{|\overline{u}_k|^{-\alpha}[\overline{F}(\overline{u}_k)+\langle\overline{h}|\overline{u}_k\rangle]-c_5/T\right\}\leq c_1+c_6.$$
Then, by \eqref{sdpt1} the sequence $\{\overline{u}_k\}$ is bounded in $\mathbb{R}^N$. Hence, as $\{\widetilde{u}_k\}$ is bounded in the finite dimensional space $X_{T+2}$ (see \eqref{estvtilde}), one get that functional $\mathcal{E}_h$ satisfies the (PS)-condition. Thus, by \eqref{saddlecond} and Saddle Point Theorem, $\mathcal{E}_h$ has a critical point and the proof is achieved by virtue of Theorem \ref{cpsol}. \cqfd

\begin{example}\label{exampletang1}{\em For given $\nu=(\nu_1, \dots, \nu_N)\in \mathbb{R}^N$, we define
$$ SIN_{\nu}:\mathbb{R}^N\to\mathbb{R}^N, \qquad  SIN_{\nu}(v)=\left (\nu_1 \sin v_1, \dots, \nu_N \sin v_N \right ) \quad (v=(v_1, \dots , v_N) \in \mathbb{R}^N)$$
and consider the system
\begin{equation}\label{execuation}
-\Delta\left[\phi(\Delta u(n-1))\right]=b(n)|u(n)|^{\alpha-1}u(n)+c(n)SIN_{\nu}(u(n))+h(n) \quad (n \in \mathbb{Z}[1,T]),
\end{equation}
with $\alpha>0$ a constant, $b,\ c:\mathbb{Z}[1,T] \to \mathbb{R}$ and $h\in X_{T}$.
As $SIN_{\nu}(v)=\nabla C_{\nu}(v)$, where $C_{\nu}(v)=-\displaystyle\sum_{i=1}^{N}\nu_i\cos v_i$,
this system associated with the boundary condition \eqref{potbcsysgen} is a problem of type \eqref{forcterm}, with
\begin{equation*}\label{Fex}
F(n,v)=b(n) \frac{|v|^{\alpha +1}}{\alpha +1}+c(n) \, C_{\nu}(v) \quad \left ( n \in \mathbb{Z}[1,T], \, v\in \mathbb{R}^N \right ).
\end{equation*}
Then, by Theorem \ref{mininfi} it is easy to see that system \eqref{execuation} has at least one solution satisfying the general boundary condition \eqref{potbcsysgen} provided that $\overline{b}<0$. This result is known in the particular case of the periodic boundary conditions \cite[Example 10]{Ma1}. Also, from Theorem \ref{saddlepoint}, we obtain that system \eqref{execuation} subject to the boundary condition \eqref{potbcsysgen} has at least one solution provided that $j|_{d_N^1}\equiv0$, $j|_{D(j)}$ is bounded and $\overline{b}>0$.
\smallskip

If $b\equiv 0$ then \eqref{execuation} becomes
\begin{equation}\label{execuationb0}
-\Delta\left[\phi(\Delta u(n-1))\right]=c(n)SIN_{\nu}(u(n))+h(n) \quad (n \in \mathbb{Z}[1,T])
\end{equation}
and, using Theorem \ref{thperiodic} with $\omega_i=2 \pi$ ($i=\overline{1,N}$), we infer that \eqref{execuationb0} has at least one solution satisfying the boundary condition \eqref{potbcsysgen} provided that $j$ fulfills condition $(i)$ in Theorem \ref{thperiodic} and $\overline{h}=0_{\mathbb{R}^N}$.
\smallskip

Notice that $j$ which yields the periodic conditions \eqref{peridis} as well as the Neumann ones \eqref{neuhdis} satisfies both the requirement $(i)$ of Theorem \ref{thperiodic} and those of Theorem \ref{saddlepoint}. In addition to these classical conditions, generalizations of \eqref{neuhdis} can be obtained, as seen below.
}
\end{example}

Making use of the strip-like set $D_{\sigma}$ introduced in \eqref{Dsigma}, we analyze the boundary condition \eqref{ncon} with $K=\overline{D}_{\sigma}$ when $\sigma \geq (T+1)\, a$ (the remaining case $0 < \sigma<  (T+1) \, a$ can be treated similarly to \cite[Proposition 4.7]{Je}).
Then \eqref{ncon}, with $G=G(x,y)$ as in Section 1, becomes
\begin{equation}\label{nconstrip}
\left\{ \begin{array}{clrc}
& \! \! \! \! \! \left ( u(0), u(T+1) \right ) \in \overline{D}_{\sigma}, \\
& \! \! \! \! \!  \left ( \phi (\Delta u(0))-\nabla _x G ( u(0), u(T+1)), -\phi (\Delta u(T))-\nabla _y G ( u(0), u(T+1))\right ) \in N_{\overline{D}_{\sigma}}(u(0), u(T+1))
\end{array}
\right.
\end{equation}
and, recall this is nothing else but \eqref{potbcsysgen} with
\begin{equation}\label{partjS}
j=j_{G,\overline{D}_{\sigma}}:=G+I_{\overline{D}_{\sigma}}.
\end{equation}

\begin{proposition}\label{condsigma} Assume that $G=G(x,y):\mathbb{R}^N \times \mathbb{R}^N \to \mathbb{R}$ is convex, $\nabla G= \left ( \nabla_x G  \, , \, \nabla_y G \right )$ exists and is continuous,
$\nabla_x G(0_{\mathbb{R}^N \times \mathbb{R}^N}) = 0_{\mathbb{R}^N}=\nabla_y G(0_{\mathbb{R}^N \times \mathbb{R}^N})$
and  $G(0_{\mathbb{R}^N \times \mathbb{R}^N})=0$. If $\sigma \geq (T+1)\, a$ and $u$ is a solution of problem \eqref{forcterm} with $j= j_{G,\overline{D}_{\sigma}}$, then it satisfies the Neumann-Steklov type boundary conditions
\begin{equation}\label{neumsteklov}
\phi (\Delta u(0))=\nabla _x G ( u(0), u(T+1)), \quad \phi (\Delta u(T))=-\nabla _y G ( u(0), u(T+1)).
\end{equation}
\end{proposition}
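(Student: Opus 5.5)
The plan is to show that the normal cone term in \eqref{nconstrip} vanishes, because the boundary point of any solution lies in the interior $D_\sigma$ of $\overline{D}_\sigma$. First, $j=j_{G,\overline{D}_\sigma}$ satisfies $(H_j)$. Next, the subdifferential sum rule applies, since $G$ is finite, convex and continuous everywhere. Hence $\partial j(z)=\nabla G(z)+N_{\overline{D}_\sigma}(z)$ for $z\in\overline{D}_\sigma$, so the boundary condition of \eqref{forcterm} reads exactly as \eqref{nconstrip}.

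The key step is the localization of $(u(0),u(T+1))$. By definition, a solution $u$ of \eqref{forcterm} has $\|\Delta u\|_\infty<a$. Then, as observed in Section 1 (see \eqref{velongs}, or Lemma \ref{lemarmj} with $m=T+1$, $i=0$),
\[
|u(T+1)-u(0)|\le\sum_{n=1}^{T+1}|\Delta u(n-1)|<(T+1)\,a\le\sigma .
\]
Therefore $(u(0),u(T+1))\in D_\sigma$. The set $D_\sigma=\{(x,y):|x-y|<\sigma\}$ is open, being the preimage of $[0,\sigma)$ under the continuous map $(x,y)\mapsto|x-y|$. So $(u(0),u(T+1))$ is an interior point of the closed convex set $\overline{D}_\sigma$.

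Next, recall the standard fact that the normal cone at an interior point is trivial. If $z\in\operatorname{int}K$ and $p\in N_K(z)$, then $\langle\!\langle p\,|\,z'-z\rangle\!\rangle\le0$ for all $z'\in K$. Since $z+\varepsilon p\in K$ for small $\varepsilon>0$, this gives $\varepsilon\|p\|^2\le 0$, so $p=0$. Applying this at $z=(u(0),u(T+1))$, the second relation in \eqref{nconstrip} yields
\[
\bigl(\phi(\Delta u(0))-\nabla_xG(u(0),u(T+1)),\,-\phi(\Delta u(T))-\nabla_yG(u(0),u(T+1))\bigr)=0_{\mathbb{R}^N\times\mathbb{R}^N}.
\]
This is precisely \eqref{neumsteklov}.

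I do not expect a genuine obstacle. The only points needing care are the sum rule for $\partial(G+I_{\overline{D}_\sigma})$ and the strictness of the inequality $|u(T+1)-u(0)|<(T+1)a$. The latter uses the strict bound $\|\Delta u\|_\infty<a$ built into the notion of solution, and it is what makes the case $\sigma=(T+1)a$ work.
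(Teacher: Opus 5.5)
Your proposal is correct and follows essentially the same route as the paper: the paper also uses \eqref{velongs} to place $(u(0),u(T+1))$ in $D_{(T+1)a}\subset D_\sigma$, then reads off \eqref{neumsteklov} from \eqref{nconstrip} using that $N_{\overline{D}_\sigma}(z)=\{0_{\mathbb{R}^N\times\mathbb{R}^N}\}$ for $z\in D_\sigma$. You simply spell out the details the paper leaves implicit: the subdifferential sum rule, the triviality of the normal cone at interior points, and the role of the strict bound $\|\Delta u\|_\infty<a$ when $\sigma=(T+1)a$.
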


\noindent\proof As $u$ is a solution of \eqref{forcterm}, from \eqref{velongs} we know that $(u(0),u(T+1)) \in  D_{(T+1)\, a} \subset D_{\sigma}$. Then, \eqref{neumsteklov} follows from \eqref{nconstrip} and
$N_{\overline{D}_{\sigma}}(z) =\{ 0_{\mathbb{R}^N \times \mathbb{R}^N} \}$ ($z\in D_{\sigma}$). \cqfd

\begin{corollary}\label{thperneuste} Let $g:\mathbb{R}^N \to \mathbb{R}$ be of class $C^1$, convex, with $\nabla g (0_{\mathbb{R}^N})=0_{\mathbb{R}^N}$, $g(0_{\mathbb{R}^N})=0$ and $h\in X_T$.
If either $F$ satisfies the periodicity condition (ii) in Theorem \ref{thperiodic} and $\overline{h}=0_{\mathbb{R}^N}$ or there are constants $\alpha, c\ge 0$ such that \eqref{rel1cor} and \eqref{sdpt1} hold true,
then problem
\begin{equation}\label{forctermpart1}
\left\{
  \begin{array}{lcl}
    -\Delta\left[\phi(\Delta u(n-1))\right]=\nabla_u F(n, u(n)) +h(n) \quad (n \in \mathbb{Z}[1,T]) \\
    \phi (\Delta u(0))=\nabla g ( u(0)- u(T+1))=\phi (\Delta u(T))
  \end{array}
  \right.
  \end{equation}
has at least one solution.
\end{corollary}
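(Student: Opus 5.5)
The plan is to realize the boundary condition in \eqref{forctermpart1} as \eqref{potbcsysgen} with $j=j_{G,\overline{D}_{\sigma}}$ from \eqref{partjS}. We take
$$G(x,y):=g(x-y)\qquad ((x,y)\in\mathbb{R}^N\times\mathbb{R}^N)$$
and $\sigma\geq (T+1)a$. We then obtain a solution of \eqref{forcterm} from Theorem \ref{thperiodic} (periodic case) or Theorem \ref{saddlepoint} (growth case). Finally, Proposition \ref{condsigma} converts the abstract boundary condition into the Neumann--Steklov condition \eqref{neumsteklov}.

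\emph{Step 1: the hypotheses on $G$ and $j$.}
\begin{itemize}
\item $G$ is convex and $C^1$, being the composition of the convex $C^1$ function $g$ with a linear map.
\item Its gradient is $\nabla_x G(x,y)=\nabla g(x-y)$ and $\nabla_y G(x,y)=-\nabla g(x-y)$, so $\nabla G(0)=0$ and $G(0)=0$.
\item $K=\overline{D}_\sigma$ is closed, convex and contains $0$, so by the discussion around \eqref{ncon}, $j=G+I_{\overline{D}_\sigma}$ satisfies $(H_j)$.
\end{itemize}
Once we have a solution, \eqref{neumsteklov} reads $\phi(\Delta u(0))=\nabla g(u(0)-u(T+1))$ and $\phi(\Delta u(T))=-(-\nabla g(u(0)-u(T+1)))=\nabla g(u(0)-u(T+1))$. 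This is exactly the boundary condition in \eqref{forctermpart1}.

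\emph{Step 2: the invariance hypotheses on $j$.}
\begin{itemize}
\item For $\zeta=(\xi,\xi)\in d_N^1$ and $z=(x,y)\in D(j)=\overline{D}_\sigma$, we have $(x+\xi)-(y+\xi)=x-y$. Hence $z+\zeta\in\overline{D}_\sigma$ and $G(z+\zeta)=G(z)$, i.e. $j(z+\zeta)=j(z)$. This is condition (i) of Theorem \ref{thperiodic}.
\item For $z\in d_N^1$ we get $j(z)=g(0)=0$, so $j|_{d_N^1}\equiv 0$.
\item On $D(j)$ we have $|x-y|\leq\sigma$. Since $g$ is continuous on the compact ball $\overline{B}_\sigma$, $j|_{D(j)}$ is bounded.
\end{itemize}
So both the hypotheses of Theorem \ref{thperiodic} and those of Theorem \ref{saddlepoint} are met.

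\emph{Step 3: conclusion.} In the first alternative ($F$ periodic and $\overline{h}=0_{\mathbb{R}^N}$), Theorem \ref{thperiodic} yields a minimizer of $\mathcal{E}_h$, which solves \eqref{forcterm}. In the second alternative (\eqref{rel1cor} and \eqref{sdpt1}), Theorem \ref{saddlepoint} yields a solution of \eqref{forcterm}. In either case Proposition \ref{condsigma} gives \eqref{neumsteklov}, hence \eqref{forctermpart1}.

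The only delicate point is checking that $j$ is bounded on $D(j)$. This is exactly why we choose the truncation set $\overline{D}_\sigma$ with $\sigma\geq (T+1)a$ finite rather than all of $\mathbb{R}^N\times\mathbb{R}^N$. Proposition \ref{condsigma} then guarantees the normal-cone term vanishes, because every solution satisfies \eqref{velongs} and so lies in the interior $D_{(T+1)a}\subset D_\sigma$.
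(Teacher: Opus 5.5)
Your proposal is correct and follows the paper's proof essentially verbatim. You take $G(x,y)=g(x-y)$ and $\sigma\geq (T+1)a$, then verify that $j=G+I_{\overline{D}_\sigma}$ satisfies condition (i) of Theorem \ref{thperiodic}, that $j|_{d_N^1}\equiv 0$, and that $j$ is bounded on $D(j)=\overline{D}_\sigma$, and you conclude via Theorems \ref{thperiodic}, \ref{saddlepoint} and Proposition \ref{condsigma}. You also spell out how \eqref{neumsteklov} reduces to the boundary condition in \eqref{forctermpart1} (the sign of $\nabla_y G$ cancels), which the paper leaves implicit.
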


\noindent\proof Take $G(x,y):=g(x-y)$ ($x,y \in \mathbb{R}^N$) and let $\sigma \geq (T+1)\, a$. Clearly, one has that $G$ is convex, of class $C^1$ on $\mathbb{R}^N \times \mathbb{R}^N$, $G|_{d_N^1}\equiv 0$,
$\nabla G(x,y)= \left ( \nabla g(x-y), -\nabla g(x-y) \right ),$
and $\nabla G|_{d_N^1}\equiv0_{\mathbb{R}^N \times \mathbb{R}^N}.$ With this $G$, function  $j=j_{G,\overline{D}_{\sigma}}$ in \eqref{partjS} satisfies condition $(i)$ from Theorem \ref{thperiodic},
as well as $j|_{d_N^1}\equiv0$ and $j|_{D(j)}$ is bounded (since $D(j)=\overline{D}_{\sigma}$ and $g$ is bounded on the compact set $\overline{B}_{\sigma}$). Then the conclusion follows from Proposition \ref{condsigma} and Theorems \ref{thperiodic} and \ref{saddlepoint}. \cqfd

\medskip
As a simple example of a boundary condition of the type of the one in \eqref{forctermpart1} we can give
\begin{equation}\label{examplesdp}
\phi (\Delta u(0))=| u(0)- u(T+1)| ( u(0)- u(T+1))=\phi (\Delta u(T)),
\end{equation}
which corresponds to $g(x)=|x|^2/2$ ($x\in \mathbb{R}^N$). Then, referring to Example \ref{exampletang1}, from Corollary \ref{thperneuste} we have that system \eqref{execuation} has at least one solution $u$ satisfying  \eqref{examplesdp} provided that $\overline{b}>0$, and the same is true for system \eqref{execuationb0} when $\overline{h}=0_{\mathbb{R}^N}$.

\medskip
\begin{remark}\label{remuges}
{\em Theorem \ref{mininfi} substantially extends to the general case of the boundary condition \eqref{potbcsysgen} the result from the particular case of periodic conditions due to Mawhin \cite[Theorem 7]{Ma1}. On the other hand, in \cite[Section 8]{Ma1} the existence of periodic solutions when the potential satisfies condition \eqref{sdpt1} together with \eqref{rel1cor} and  which could lead to saddle point solutions is raised as an open problem. In this sense, Theorem \ref{saddlepoint} not only answers this problem, but also highlights a whole class of problems for which the result is valid. The averaged coercivity conditions \eqref{rel2cor} and \eqref{sdpt1} are of the type introduced  by Tang  for classical second order differential systems \cite{Tg} and by  Xue and Tang for difference systems \cite{XuTg}. In the particular case $\alpha=0$ these conditions are of Ahmad-Lazer-Paul
type \cite{ALP} and are used for discrete periodic and Neumann problems with $p(\cdot)$-Laplacian in \cite[Theorems 3.5 and 6.3]{BeJeSe}. It is worth noting that, based on the provided variational formulation, various other results on the existence of solutions can be obtained by following the strategies driven from the differential case \cite{Je}. However, for the sake of brevity, here we limit ourselves to those presented above.
}
\end{remark}

\Addresses


\bigskip




\begin{thebibliography}{99}

\bibitem{ALP} S. Ahmad, A.C. Lazer and J.L. Paul, Elementary critical point theory and perturbations of elliptic boundary value problems at resonance, \textit{Indiana Math. J.} \textbf{25} (1976), 933--944.

\bibitem{BeGh} C. Bereanu and D. Gheorghe, Topological methods for boundary value problems involving discrete vector $\phi$-Laplacians, \textit{Topol. Methods Nonlinear Anal.} \textbf{38} (2) (2011), 265--276.

\bibitem{BeJeSe1} C. Bereanu, P. Jebelean and C. \c{S}erban, Ground state and mountain pass solutions for discrete $p(\cdot)$-Laplacian, \textit{Bound. Value Probl.} \textbf{2012}, 2012:104, 1--13.

\bibitem{BeJeSe} C. Bereanu, P. Jebelean and C. \c{S}erban, Periodic and Neumann problems for discrete $p(\cdot)$-Laplacian, \textit{J. Math. Anal. Appl.} \textbf{399} (1) (2013), 75--87.

\bibitem{BeMa} C. Bereanu and J. Mawhin, Boundary value problems for second-order nonlinear difference equations with discrete
$\phi$-Laplacian and singular $\phi$, \textit{J. Difference Equ. Appl.} \textbf{14} (2008), 1099--1118.

\bibitem{berg} P.G. Bergmann, \textit{Introduction to the Theory of Relativity}, Dover, New York, 1976.

\bibitem{Br} H. Brezis, \textit{Op\'{e}rateurs Maximaux Monotones}, North-Holland, Amsterdam, 1973.

\bibitem{CaJeSe} A. Cabada, P. Jebelean and C. \c{S}erban, Dirichlet systems with discrete relativistic operator, \textit{Bull. Lond. Math. Soc.} \textbf{56} (3) (2024), 1149--1168.

\bibitem{DaGu} A. Daouas and A. Guefrej, On subharmonic solutions for second order difference equations with relativistic operator, \textit{Bol. Soc. Paran. Mat.} \textbf{43} (2025), 1--11.

\bibitem{DMM} Z. Do\v{s}l\'{a}, M. Marini and S. Matucci,  Extremal solutions for difference equations with the Minkowski mean curvature operator, \textit{J. Difference Equ. Appl.}, DOI: 10.1080/10236198.2026.2637719.

\bibitem{GrSe} A. Gruie and C. \c{S}erban, Non-homogeneous discrete Dirichlet problem with singular $\phi$-Laplacian, \textit{Appl. Math. Lett.} \textbf{176} (2026), 109859.

\bibitem{Je} P. Jebelean, Potential systems with singular $\phi$-Laplacian, \textit{Commun. Contemp. Math.} {\bf 28} (3) (2026), 2550041.

\bibitem{JeMaSe} P. Jebelean, J. Mawhin and C. \c{S}erban, A vector $p$-Laplacian type approach to multiple periodic solution for the $p$-relativistic operator, \textit{Commun. Contemp. Math.} {\bf 19} (2017), 1--16.

\bibitem{JePr} P. Jebelean and R. Precup, Poincar\'{e} inequality in reflexive cones, \textit{Appl. Math. Lett.} {\bf 24} (2011), 359--363.

\bibitem{JeSe} P. Jebelean and C. \c{S}erban, Non-potential systems with relativistic operators and maximal monotone boundary conditions, \textit{J. Fixed Point Theory Appl.} \textbf{27} (2) (2025), 1--11.

\bibitem{LaLi} L.D. Landau and E.M. Lifchitz, \textit{The Classical Theory of Fields}, Pergamon Press, Oxford, 1971.

\bibitem{LiLu} Y. Li and Y. Lu, Ambrosetti-Prodi type results for discrete Minkowski - mean curvature operators with repulsive singularities, \textit{J. Appl. Anal. Comput.} \textbf{15} (5) (2025), 2726--2746.

\bibitem{LuMa} Y.Q. Lu and R.Y. Ma, Existence and multiplicity of solutions of second-order discrete Neumann problem with singular $\phi$-Laplacian operator, \textit{Adv. Difference Equ.} \textbf{2014}, 2014:227, 1--18.

\bibitem{LuMaLu} Y.Q. Lu, R.Y. Ma and B. Lu, Existence and multiplicity of solutions for discrete Neumann-Steklov problems with singular $\phi$-Laplacian, \textit{Adv. Difference Equ.} \textbf{2017},  2017:312, 1--18.

\bibitem{Llo} N.G. Lloyd, \textit{Degree Theory}, Cambridge University Press, Cambridge, 1978.

\bibitem{Ma1} J. Mawhin, Periodic solutions of second order nonlinear difference systems with $\phi$-Laplacian: A variational approach, \textit{Nonlinear Anal.} \textbf{75} (2012), 4672--4687.

\bibitem{[Ph]} R.R. Phelps, Lectures on maximal monotone operators, \textit{Extracta Math.} {\bf 12} (1997), 193--230.

\bibitem{Rock2} R.T. Rockafellar, On the maximal monotonicity of subdifferential mappings, \textit{Pacific J. Math.} {\bf 33} (1970), 209--216.

\bibitem{Rock} R.T. Rockafellar, \textit{Convex Analysis}, Princeton University Press, Princeton, 1972.

\bibitem {Sz} A. Szulkin, Minimax principles for lower semicontinuous functions and applications to nonlinear boundary value problems, \textit{Ann. Inst. H. Poincar\'{e} Anal. Non Lin\'{e}aire} {\bf 3} (1986), 77--109.

\bibitem{cSe} C. \c{S}erban, Existence of solutions for discrete $p$-Laplacian with potential boundary conditions, \textit{J. Difference Equ. Appl.} \textbf{19} (3) (2013), 527--537.

\bibitem{Tg} C.-L. Tang,  Periodic solutions for nonautonomous second order systems with sublinear nonlinearity, \textit{Proc. Amer. Math. Soc.} \textbf{126} (1998), 3263--3270.

\bibitem{Ze} E. Zeidler, \textit{Nonlinear Functional Analysis and its Applications II/B: Nonlinear Monotone Operators}, Springer-Verlag New York, 1990.

\bibitem{XuTg} Y.F. Xue and C.-L. Tang, Existence of a periodic solution for subquadratic second-order discrete Hamiltonian system, \textit{Nonlinear Anal.} \textbf{67} (2007), 2072--2080.

\end{thebibliography}
\end{document}